\newcommand{\sect}[1]{\section{#1}\setcounter{equation}{0}}
\font\mbn=msbm10 scaled \magstep1
\font\mbs=msbm7 scaled \magstep1
\font\mbss=msbm5 scaled \magstep1
\newcommand{\RR}       { \mathbb{R}}
\newcommand{\N}       { \mathbb{N}}
\newcommand{\Z}        {\mathbb{Z}  }
\newtheorem{Th}{Theorem}[section]
\newtheorem{Lm}[Th]{Lemma}
\newtheorem{C}[Th]{Corollary}
\newtheorem{D}[Th]{Definition}
\newtheorem{St}[Th]{Stipulation}
\newtheorem{Stat}[Th]{Statement}
\newtheorem{Prop}[Th]{Proposition}
\newtheorem{R}[Th]{Remark}
\newtheorem{Notation}[Th]{Notation}
\newtheorem*{Theorem}{Theorem}
\begin{document}

\title[On Banach Structure of Multivariate BV spaces I]{On Banach Structure of Multivariate BV spaces I}
\author{Alexander Brudnyi}
\address{Department of Mathematics and Statistics\newline
\hspace*{1em} University of Calgary\newline
\hspace*{1em} Calgary, Alberta, Canada\newline
\hspace*{1em} T2N 1N4}
\email{abrudnyi@ucalgary.ca}
\author{Yuri Brudnyi}
\address{Department of Mathematics\newline
\hspace*{1em} Technion\newline
\hspace*{1em} Haifa, Israel\newline
\hspace*{1em} 32000}
\email{ybrudnyi@math.technion.ac.il}

\keywords{Multivariate functions of bounded variation, local polynomial approximation, $C^\infty$ approximation, duality, weak$^*$ compactness}
\subjclass[2010]{Primary 26B30. Secondary 46E35.}

\thanks{Research of the first author is supported in part by NSERC}

\begin{abstract}
We introduce and study multivariate generalizations  of the classical $BV$ spaces of Jordan, F. Riesz and Wiener. The family of the introduced spaces contains or is intimately related to a considerable class of function spaces of modern analysis including BMO, BV, Morrey spaces and those of Sobolev of arbitrary smoothness, Besov and Triebel-Lizorkin spaces.
We prove under mild restrictions that the BV spaces of this family are dual and present constructive characterizations of  their preduals via atomic decompositions. Moreover, we show that under additional restrictions such a predual space is isometrically isomorphic to the dual space of the separable subspace of the related $BV$ space generated by $C^\infty$ functions. As a corollary we obtain  the ``two stars theorem'' asserting that the second dual of this separable subspace is isometrically isomorphic to the $BV$ space. An essential role in the proofs play approximation properties of the $BV$ spaces under consideration, in particular, weak$^*$ denseness of their subspaces of $C^\infty$ functions.
Our results imply the similar ones (old and new) for the classical function spaces listed above obtained by the unified approach.
\end{abstract}

\date{}

\maketitle

\sect{Introduction}
\subsection{} Important properties of functions of bounded (Jordan) variation and their numerous applications in analysis have been attracting many researchers to define and study their multivariate analogs (Vitaly, Hardy, Lebesgue, Frechet, Tonelli, Kronrod, De Giorgi to name but a few). Each of the proposed definitions was directed to a multivariate generalization of a specific property of univariate $BV$ functions while those introduced in that way possessed (sometimes in disguise) also certain other important properties. For instance, the Hardy variation was introduced initially to generalize the Dirichlet convergence criterion to multivariate Fourier series but later it was discovered that measurable functions of bounded Hardy variation are in addition differentiable almost everywhere.

Another example is the Tonelli variation introduced initially for solving the problem  of the characterization of multivariate continuous functions with rectifiable graphs posed by Poincar\'{e}. However, at present the modern form of the Tonelli variation given successively by Cesari (1936), Fichera (1954) and De Giorgi (1954) plays an essential role in variational calculus and quasilinear PDEs of the first order, see, e.g., \cite{Gi-84} and \cite{AFP-00} for the results and the corresponding references.

As in the previous cases, the concept of variation presented below is intimately related to a specific problem of multivariate analysis,
the problem of the characterization of images of Sobolev spaces under continuous embeddings in certain spaces of integrable or continuous functions. The connection of the problem with the (Jordan) $BV$ spaces was  discovered  by Lebesgue \cite{Le-04} and Vitali \cite{Vi-05}. The corresponding result named the fundamental theorem of calculus, see, e.g., \cite[Ch.\,7]{Ru-87}, implies (in fact, is equivalent to) the next assertion.

\begin{Theorem}[Lebesgue] There is a linear isometry of $\dot W_1^1(0,1)$ in $BV[0,1]$ whose image  
denoted by $AC[0,1]$ consists of absolutely continuous functions.
\end{Theorem}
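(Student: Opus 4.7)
\emph{Proof plan.} The natural approach is to realize the embedding $T\colon \dot W_1^1(0,1)\to BV[0,1]$ explicitly as a primitive operator. For $f\in L^1(0,1)$ representing the derivative of an element of $\dot W_1^1(0,1)$, I would set
\[
(Tf)(x) := \int_0^x f(t)\,dt,\qquad x\in[0,1].
\]
Absolute continuity of the Lebesgue integral gives $Tf\in AC[0,1]$, and linearity is evident. The content of the theorem then reduces to two claims: (i) the isometry identity $V(Tf)=\|f\|_{L^1}$, where $V$ denotes the Jordan total variation; and (ii) surjectivity of $T$ onto $AC[0,1]$ up to the appropriate normalization by a constant.

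For (i), the upper bound
\[
\sum_{i=1}^n \bigl|Tf(x_i)-Tf(x_{i-1})\bigr| = \sum_{i=1}^n \Bigl|\int_{x_{i-1}}^{x_i} f\,dt\Bigr| \leq \int_0^1 |f|\,dt
\]
for an arbitrary partition $0=x_0<\dots<x_n=1$ yields $V(Tf)\leq \|f\|_{L^1}$ at once. For the reverse direction, I would first treat the case where $f$ is a step function by refining the partition to align with its sign-change points, which realizes equality exactly; then extend to a general $f\in L^1(0,1)$ by density of step functions, combined with the observation that the preceding display shows $V\circ T$ to be $1$-Lipschitz with respect to $\|\cdot\|_{L^1}$.

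Claim (ii) is precisely the fundamental theorem of calculus cited in the statement: every $g\in AC[0,1]$ is almost everywhere differentiable with $g'\in L^1(0,1)$, and
\[
g(x) - g(0) = \int_0^x g'(t)\,dt = (Tg')(x),
\]
so $g$ lies in the image of $T$ modulo an additive constant, which $V$ ignores. The only genuinely analytic step is the lower bound in (i); everything else is either a direct computation from the definition of $V$ or a formal appeal to the Lebesgue--Vitali characterization of absolute continuity.
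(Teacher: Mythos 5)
The paper states this theorem as classical background (citing Rudin, Ch.\ 7) and does not prove it, so there is no internal argument to compare against. Your proof is correct and is the standard one: the primitive operator $T$, the trivial upper bound $V(Tf)\le\|f\|_{L^1}$, the reverse inequality via step functions together with the observation that $f\mapsto V(Tf)$ is $1$-Lipschitz on $L^1$, and the fundamental theorem of calculus for absolutely continuous functions to identify the image. One small reparametrization worth making explicit: your $T$ is an isometry of $L^1(0,1)$ onto $\{g\in AC[0,1]:g(0)=0\}\subset BV[0,1]$, and the map in the statement from $\dot W_1^1(0,1)$ to $BV[0,1]$ is obtained by precomposing $T$ with $g\mapsto g'$, which carries the $\dot W_1^1$ seminorm isometrically to the $L^1$ norm; since the $BV$ seminorm ignores the additive constant $g(0)$, this amounts to the same thing, but in a fully careful write-up you should say so.
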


Hereafter $\dot W_p^k(\Omega)$, $1\le p\le\infty$, $k\in\N$, where $\Omega\subset\RR^d$ is a domain, stands for the homogeneous Sobolev space defined by a seminorm given for $f\in L_p(\Omega)$ by 
\begin{equation}\label{sobolev}
|f|_{W_p^k(\Omega)}:=\sum_{|\alpha|=k}\|D^\alpha f\|_{L_p(\Omega)}.
\end{equation} 

The result was extended to the space $\dot W_p^1(0,1)\hookrightarrow C[0,1]$, $1<p<\infty$, by F.\,Riesz \cite{Ri-10}. In this case, the image of the isometry coincides with the space $BV_p^{1/p'}\![0,1]$, $\frac{1}{p'}+\frac 1 p =1$, of functions of bounded $\bigl(\frac{1}{p'},p\bigr)$-variation. 

\noindent Here $(\lambda,p)$-variation of a function $f\in\ell^\infty[0,1]$ is given by 
\begin{equation}\label{var1}
var_p^\lambda\, f:=\sup_{\{x_i\}}\left( \sum_i\left(\frac{|f(x_{i+1})-f(x_i)|}{|x_{i+1}-x_i|^\lambda}  \right)^p   \right)^{\frac 1 p},
\end{equation}
where $\{x_i\}\subset [0,1]$ runs over monotone sequences.

Let us note that $BV_1^0$ is Jordan's space $BV$ and  $BV_p^0$ is the  Wiener-L.\,Young space $BV_p$.

The multivariate generalization of the above formulated results requires a new concept of variation that will be presented in the next subsection. The solution of the Sobolev embedding problem (in a sense, sharpening of the Sobolev embedding theorem) was given in \cite{Br-71} and is formulated in Subsection~1.3.4.

The following appropriately reformulated definition \eqref{var1} can be seen as a model case for the presented below concept of variation.
Actually, it is readily seen that
\begin{equation}\label{var2}
var_p^\lambda\, f=2\sup_{\pi}\left(\sum_{I\in\pi}\left(\frac{E_1(f;I)}{|I|^\lambda}\right)^p\right)^{\frac 1 p},
\end{equation}
where $\pi$ runs over families of nonoverlapping\footnote{i.e., with pairwise nonintersecting interiors} closed intervals $I\subset [0,1]$ and 
\begin{equation}\label{osc1}
E_1(f;I):=\inf_{c\in\RR}\sup_{I}|f-c|\, \left(=\frac 1 2 osc(f;I)\right).
\end{equation}
Replacing here the underlying space $\ell_\infty[0,1]$ by $L_q([0,1]^d)$, $1\le q\le\infty$, the families $\pi$ by those of nonoverlapping closed subcubes in $[0,1]^d$ and taking instead of constants  polynomials in $x\in\RR^d$ of a fixed degree we arrive to the required concept of variation.

The multiparametric family of $BV$ spaces defined by this variation includes or is intimately related to a considerable class of function spaces of modern analysis including $BMO$, $BV$, Morrey spaces and those of Sobolev of arbitrary smoothness, Besov and Triebel-Lizorkin spaces. In turn, the variational representation of the named spaces allows one to study them by a new approach combining tools of geometric analysis and approximation theory. The results obtained in this way for
Sobolev type embeddings, pointwise differentiability, Lusin type approximation, the real interpolation and nonlinear $n$-term approximation are presented in the survey \cite{Br-09}.

In the present paper, this approach amplified by tools of functional analysis is used to study the Banach structure of the $BV$ spaces introduced (duality, weak$^*$ compactness, two stars theorems etc.). These results imply  the similar ones  (old and new) for the classical function spaces  obtained by the unified approach.
\subsection{} An important ingredient of the  forthcoming definition of the variation is the following notion.
\begin{D}\label{equ1.2}
Local polynomial approximation of a function $f\in L_q^{\rm loc}(\RR^d)$, $1\le q\le\infty$, is a set function given for a bounded measurable set $S\subset\RR^d$ by
\begin{equation}\label{equa1.1}
E_{kq}(f;S):=\inf_{m\in\mathcal P_{k-1}^d}\|f-m\|_{L_q(S)},
\end{equation}
where $\mathcal P_\ell^d$ is the space of polynomials in $x=(x_1,\dots, x_d)\in\RR^d$ of degree $\ell$.
\end{D}
A geometric ingredient of the basic definition is the set of {\em packings} in $Q^d$ denoted by $\Pi(Q^d)$. Each packing consists of a finite family of pairwise nonoverlapping subcubes of $Q^d:=[0,1]^d$ homothetic to $Q^d$;  in what follows, {\em packings} are denoted by $\pi,\pi',\pi_i$, etc.
\begin{D}\label{def1.1}
Let $k\in\N$, $\lambda\in\RR$ and $1\le p,q\le\infty$. A function space $\dot{V}_{pq}^{k\lambda}(Q^d)$ is defined by a seminorm given for $f\in L_q(Q^d)$ by
\begin{equation}\label{equ1.3}
|f|_{V_{pq}^{k\lambda}}:=\sup_{\pi\in\Pi(Q^d)}\left\{\sum_{Q\in\pi}\left(|Q|^{-\lambda}E_{kq}(f;Q)\right)^p\right\}^{\frac 1 p};
\end{equation}
hereafter $|S|$ stands for the $d$-measure of a set $S\subset\RR^d$.
\end{D}
It can be easily verified that $\mathcal P_{k-1}^d|_{Q^d}$ is the null-space of $\dot{V}_{pq}^{k\lambda}(Q^d)$. Hence, the factor-space
\begin{equation}\label{equ1.4}
V_{pq}^{k\lambda}(Q^d):=\dot{V}_{pq}^{k\lambda}(Q^d)/\mathcal P_{k-1}^d|_{Q^d}
\end{equation}
is normed and \eqref{equ1.3} gives rise to its norm denoted by $\|\cdot\|_{V_{pq}^{k\lambda}}$.

The standard argument proves that $V_{pq}^{k\lambda}(Q^d)$ is a Banach space.

To simplify the notations, we set
\begin{equation}\label{equ1.5}
\kappa:=\{k,d,\lambda,p,q\}
\end{equation}
and write
\begin{equation}\label{equ1.6}
\dot{V}_\kappa:=\dot V_{pq}^{k\lambda}(Q^d),\qquad |\cdot|_\kappa:=|\cdot|_{V_{pq}^{k\lambda}(Q^d)},
\end{equation}
and similarly write $V_\kappa$ and $\|\cdot\|_\kappa$ for the corresponding factor-space and its norm.

In the sequel, the following separable subspaces of $\dot V_\kappa$ and $V_\kappa$ denoted by $\dot{\textsc{v}}_\kappa$ and $\textsc{v}_\kappa$ play an essential role:
\begin{equation}\label{equ1.7}
\dot{\textsc{v}}_\kappa:={\rm clos}(C^\infty\cap\dot V_\kappa,\dot V_\kappa),\qquad  \textsc{v}_\kappa:=\dot{\textsc{v}}_\kappa/\mathcal P_{k-1}^d;
\end{equation}
hereafter $C^\infty$ and $\mathcal P_{k-1}^d$ denote the following trace-spaces
\begin{equation}
C^\infty:=C^\infty(\RR^d)|_{Q^d}\quad {\rm and}\quad \mathcal P_{k-1}^d:=\mathcal P_{k-1}^d|_{Q^d}.
\end{equation}
(It will be shown that either $C^\infty\subset\dot V_\kappa$ or $\dot V_\kappa=\mathcal P_{k-1}^d$, i.e., $C^\infty\cap\dot V_\kappa$ is either $C^\infty$ or $\mathcal P_{k-1}^d$.)
\begin{St}\label{stip1.2}
{\rm Throughout the paper we fix the unit cube $Q^d$ and integer $k\ge 1$ removing them from the related symbols. For instance, we write $\kappa:=\{\lambda,p,q\}$ instead of that in \eqref{equ1.5} and $L_q$ instead of $L_q(Q^d)$. Moreover, we write $\lambda(\kappa), p(\kappa)$, etc. if these belong to $\kappa$.
However, these indices will be preserved if they assume other values, for instance, we write $V_{p\infty}^{1 0}[0,1]$ instead of $V_\kappa$ with $\kappa=\{1,1,0,p,\infty\}$.
}
\end{St}
Let us note that  local approximation $E_{kq}(f;Q)$ is equivalent to the $k$-{\em oscillation} of $f$ on $Q$ given by
\begin{equation}\label{equa1.8}
osc_{kq}(f;Q):=\sup_{h\in\RR^d}\|\Delta_h^k f\|_{L_q(Q_{kh})},
\end{equation}
where 
\[
\Delta_h^k:=\sum_{j=0}^k (-1)^{k-j} {k \choose j}\delta_{kh}
\]
and
\[
Q_{kh}:=\{x\in Q\, :\, x+kh\in Q\}.
\]
Namely, the next two-sided inequality with the constants of equivalence  depending only on $k,d$ is true, see \cite{Br-70},
\begin{equation}\label{equa1.9}
E_{kq}(f;Q)\approx osc_{kq}(f;Q).
\end{equation}
Hence, as in the one variable theory, $\dot V_\kappa$ functions can be equivalently defined by the behaviour of their oscillations.

In the classification of $V_\kappa$ spaces, the following  characteristic will be of essence.
\begin{D}\label{def1.3}
Smoothness of the space $V_\kappa$ denoted by $s(\kappa)$ is given by
\begin{equation}\label{equ1.9}
s(\kappa):=d\left(\lambda+\frac 1p -\frac 1q\right).
\end{equation}
\end{D}
Along with $p=p(\kappa)$ smoothness is invariant under linear isomorphisms of $V_\kappa$ spaces. Moreover, it is closely related to the differentiability and approximation characteristics of $\dot V_\kappa$ functions, see the survey \cite{Br-09}. For instance, a function of smoothness $s$ belongs for almost all $x\in Q^d$ to the Taylor class
$T_q^s(x)$ if $0<s<k$ and $t_q^k(x)$ if $s=k$, see, e.g., \cite[Sec.\,3.5]{Zi-87} for their definitions.

Let us finally note that the classical $BV$ spaces are defined over the space $\ell_\infty[0,1]$ of functions bounded on $[0,1]$ while $V_\kappa$ spaces with $q=\infty$ are defined over $L_\infty$ space. 
To include them and similar spaces in our consideration we use a version of Definition \ref{def1.1} with
local approximation denoted by $E_k(\cdot ;\cdot)$ that is defined for $f\in \ell_\infty^{\rm loc}(\RR^d)$ and $S\subset\RR^d$ by
\begin{equation}\label{equa1.10}
E_k(f;S):=\inf_{m\in\mathcal P_{k-1}^d}\sup_S |f-m|;
\end{equation}
the corresponding versions of the spaces $\dot V_\kappa$, $V_\kappa$, $\dot{\textsc{v}}_\kappa$ and $\textsc{v}_\kappa$ with $\kappa:=\{\lambda,p,\infty\}$ based on this definition are denoted by  $\dot V_p^\lambda$, $V_p^\lambda$, $\dot{\textsc{v}}_p^\lambda$ and $\textsc{v}_p^\lambda$, respectively ($k$ and $Q^d$ are omitted here by Stipulation \ref{stip1.2}).

In more details, $\dot V_p^\lambda$ is defined by a seminorm
\begin{equation}\label{equat2.10}
|f|_{V_p^\lambda}:=\sup_{\pi\in\Pi}\left(\sum_{Q\in\pi}\left(|Q|^{-\lambda} E_k(f;Q)\right)^p\right)^{\frac 1 p}
\end{equation}
and $V_p^\lambda:=\dot V_p^\lambda/{\mathcal P_{k-1}^d}$.

Moreover, smoothness of this space denoted by $s(\lambda,p)$ is defined by \eqref{equ1.9} with $q=\infty$.
\begin{R}\label{remark1.5}
{\rm It seems to be natural to identify $\dot V_p^\lambda$ with $\dot V_{\{\lambda,p,\infty\}}$ by choosing for each {\em class} $f$ from the latter space its representative, say, $\hat f\in\ell_\infty(Q^d)$. Unfortunately, this is impossible as the map $f\mapsto\hat f$ is not linear in general and does not preserve local approximation. Nevertheless, as it will be shown in the forthcoming paper such identification is possible for spaces $\dot V_p^\lambda$ and 
$\dot V_{\{\lambda,p,\infty\}}$ with $\lambda>0$ and $1\le p\le\infty$.
}
\end{R}

\subsection{}
Now we enumerate the classical function spaces that coincide with or are intimately related to the $V_\kappa$ spaces introduced.
\subsubsection{\underline{Consistency with the one variable definitions}} \hfill \smallskip

We begin with the space  $V_p^\lambda [0,1]$ whose associated seminorm is given for $f\in \ell_\infty[0,1]$ by
\begin{equation}\label{equat1.11}
|f|_{V_p^\lambda [0,1]}:=\sup_{\pi}\left\{\sum_{I\in\pi}\left(\frac{E_k(f;I)}{|I|^\lambda}\right)^p\right\}^{\frac 1 p};
\end{equation}
here $\pi$ runs over packings consisting of pairwise 
nonoverlapping  subintervals in $[0,1]$ and local approximation is given by \eqref{equa1.10}.

In turn, in the classical definitions, the supremum in \eqref{equat1.11} is taken over {\em coverings} of $[0,1]$ by nonoverlapping intervals and $E_k$ is replaced by the $k$-{\em deviation} $\delta_k$ given for $f\in\ell_\infty[0,1]$ and $I:=[a,b]\subset [0,1]$ by
\[
\delta_k(f;I):=|\Delta_h^k f(a)|,\quad {\rm where}\quad h:=\frac{b-a}{k};
\]
in particular, $\delta_1(f;I):=|f(b)-f(a)|$.

Equivalence of $V_p^\lambda[0,1]$ with the space obtained by these substitutions follows from 
the Whitney inequality \cite{Wh-59}
\[
E_k(f;I)\approx\sup_{I'\subset I}\delta_k(f;I),
\]
where the constants of equivalence are independent of 
$f$ and $I$ (note that $f$ here can be nonmeasurable).

In particular, $E_1(f;I)=\frac 1 2 \sup_{x,y\in I}|f(x)-f(y)|$;
therefore for $k=1$
\[
|f|_{V_p^\lambda [0,1]}=2^{-\frac 1 p}\sup_{x_i}\left(\sum_i\left(\frac{|f(x_{i+1})-f(x_i)|}{|x_{i+1}-x_i|^\lambda}\right)^p\right)^{\frac 1 p},
\]
where $\{x_i\}$ runs over finite monotone sequences in $[0,1]$.

The supremum here denoted by $var_p^\lambda(f)$, see \eqref{var2}, defines a seminormed space of functions on $[0,1]$ that we denote by $BV_p^\lambda$.

Thus, seminormed spaces $BV_p^\lambda$ and $\dot V_p^{1,\lambda}$ are isometrically isomorphic; in particular, the classical spaces of Jordan ($p=1$, $\lambda=0$), Wiener-L.\,Young ($1\le p<\infty$, $\lambda=0$) and F. Riesz ($1<p<\infty$, $\lambda=\frac{1}{p'}:=1-\frac 1 p $) are isometrically isomorphic
to the corresponding spaces $\dot V_p^{1,\lambda}$.
\subsubsection{
\underline{$V_\kappa$ spaces of negative smoothness} } $\,$ \smallskip

The $V_\kappa$ spaces with $s(\kappa)<0$ are closely related to weighted $L_p$ spaces  with nonintegrable singularities, Morrey spaces and the likes. In particular, Morrey space $M_q^s(Q^d)$, $1\le q<\infty$, $0<s<\frac{d}{q}$, is defined by a norm given for $f\in L_q(Q^d)$ by
\begin{equation}\label{equa1.17}
\|f\|_{ M_q^s}:=\sup_{Q\subset Q^d}|Q|^{\frac s d}\left(\frac{1}{|Q|}\int_Q|f|^q\, dx\right)^{\frac 1 q }.
\end{equation}

In spite of simplicity of the definition, Morrey spaces have numerous applications in PDEs and harmonic analysis, see, e.g., \cite{Ta-92} and \cite{AX-12} and references therein.

The relation to the space $ V_\kappa$ given by the equality\footnote{Hereafter $X=Y$ for (semi-)\,normed $X,Y$ means that they coincide as linear spaces and have equivalent (semi-)\,norms.}
\begin{equation}\label{equa1.18}
V_\kappa= M_q^s/\mathcal P_{k-1}^d,\quad {\rm where}\quad \kappa:=\left\{\frac 1 q -\frac s d ,\infty,q\right\},
\end{equation}
follows from  the inequality, see \cite{Ca-64},
\[
\|f\|_{M_q^s}\le c\left(\sup_{Q\subset Q^d}|Q|^{\frac s  d -\frac 1 q}E_{kq}(f;Q)+\|f\|_q\right).
\]

Let us note that here $ s(\kappa)=-s  < 0$.\smallskip

\subsubsection{\underline{$V_\kappa$ spaces of smoothness zero}}
$\,$\smallskip

Let $p,q,\lambda\in\kappa$ satisfy
\[
1\le q\le p<\infty,\quad \lambda=\frac 1 q -\frac 1 p ,
\]
hence, $s(\kappa)=0$.

Then for $q<p$
\begin{equation}\label{equa1.19}
L_q/\mathcal P_{k-1}^d\subsetneq V_\kappa\subsetneq L_{q\infty}/\mathcal P_{k-1}^d,
\end{equation}
and for $q=p$
\begin{equation}\label{equa1.20}
L_q/\mathcal P_{k-1}^d=V_\kappa\ ({\rm isometry}).
\end{equation}

Further,  if $\kappa:=\{\lambda, p, q\}$ and $k$ satisfy
\[
\lambda=1-\frac 1 p,\quad 1< p\le \infty,\quad q=1
\quad {\rm and}\quad k=1,
\]
i.e., $s(\kappa)=0$, then the space $\dot V_\kappa$ equals up to equivalence of the seminorms to the John-Nirenberg \cite{JN-61} space $BMO_p$ defined by a seminorm given for $f\in L_1$ by
\begin{equation}\label{equat1.18}
|f|_{BMO_p}:=\sup_{\pi\in\Pi}\left(\sum_{Q\in\pi}|Q|\left(\frac{1}{|Q|}\int_Q|f-f_Q|\,dx\right)^p\right)^{\frac 1 p};
\end{equation}
here $ f_Q:=\frac{1}{|Q|}\int_Q f\, dx$.

Denoting the expression under supremum by $\gamma(\pi;f)$ we define a subspace of $BMO_p$ denoted by $VMO_p$ by the condition
\begin{equation}
\lim_{\varepsilon\rightarrow\infty}\sup_{|\pi|\le\varepsilon}\gamma(\pi;f)=0,
\end{equation}
where $|\pi|:=\sup_{Q\in\pi}|Q|$.

The mostly used spaces with $p=\infty$ are denoted by $BMO, VMO$; the latter was introduced and studied for $d=1$ in \cite{Sa-75}. Numerous applications of these spaces in analysis are summarized in the book \cite{St-93}.

In general, we have for $k\ge 1$ and $1\le q<p\le\infty$, $\lambda=\frac 1 q-\frac 1 p $ the equality
\begin{equation}\label{equa1.21}
V_\kappa=BMO_p/\mathcal P_{k-1}^d\quad {\rm and}\quad \textsc{v}_\kappa=VMO_p/\mathcal P_{k-1}^d.
\end{equation}

\subsubsection{\underline{$V_\kappa$ spaces of positive smoothness} } $\,$\smallskip

(a) First, let $p,q,\lambda\in\kappa$ and $s(\kappa)$ satisfy
\begin{equation}\label{equa1.23}
1\le p<q<\infty,\quad \lambda=0,\quad s(\kappa)=k.
\end{equation}
Then it is true that
\begin{equation}\label{equa1.24}
\dot W_p^k=\dot{\textsc{v}}_\kappa.
\end{equation}
If $\lambda>0$, this equality holds also for $q=\infty$.

Now  let $d, p,q\in\kappa$ and $s(\kappa)$ be such that
\begin{equation}\label{equa1.25}
d\ge 2,\quad 1=p\le q\le\frac{d}{d-k}<\infty\quad {\rm and}\quad s(\kappa)=k.
\end{equation}
Then it is true that
\begin{equation}\label{equa1.26}
BV^k=\dot V_\kappa,
\end{equation}
where $BV^k$ consists of $L_1$ functions whose $k$-th distributional derivatives are finite  Borel measures on $Q^d$, see \cite[\S 4, Thm.\,12]{Br-71}.

Hence, a seminorm of $BV^k$ is given for $f\in L_1$ by
\begin{equation}\label{eq1.26a}
|f|_{BV^k}:=\sum_{|\alpha|=k}{{\rm var}}\,D^\alpha f\, \left(:=\sum_{|\alpha|=k}\|D^\alpha f\|_M\right).
\end{equation}

For $k=1$, this gives the seminorm of the classical space $BV(Q^d)$, see, e.g., the books \cite{Gi-84}, \cite{AFP-00} for properties and numerous applications of this space in analysis.

In turn, \eqref{equa1.26} implies the series of equivalent definitions of $BV$ in the spirit of that of Jordan. In fact,  in the notation of
\eqref{equat1.18}, the relation \eqref{equa1.26} gives for $f\in L_1$ 
\[
|f|_{BV(Q^d)}\approx \sup_{\pi\in\Pi (Q^d)}\sum_{Q\in\pi}\frac{\|f-f_Q\|_{L_q(Q)}}{|Q|^\lambda},
\]
where $1\le q\le\frac{d}{d-1}\, (<\infty)$, $\lambda=\frac{d-1}{d}-\frac 1 q $ and the constants of equivalence depend only on $d$.
\begin{R}\label{remark1.7}
{\rm (1) The case $s(\kappa)=k$ is maximal, since $\dot V_\kappa=\mathcal P_{k-1}^d$ if $s(\kappa)>k$, see Lemma \ref{lem3.1} below.

\noindent (2) Conditions \eqref{equa1.23}, \eqref{equa1.25} imply continuous embeddings of the corresponding  spaces in $L_q$. Moreover, if $\lambda>0$ these embeddings are compact.
}
\end{R}

(b) Finally, we consider relations of $V_\kappa$ spaces of smoothness 
\[
0<s:=s(\kappa)<k,
\]
to the homogeneous Besov (Lipschitz) spaces $\dot B_p^{sp}$ and $\dot B_p^{s\infty}$. The various applications of these spaces are surveyed in \cite{Tr-92}.

Let us recall that the space $\dot B_p^{s\theta}$, $1\le p,\theta\le\infty$, is defined by one of equivalent seminorms given for $f\in L_p(Q^d)$, an integer $0\le \ell< s$ and $k=k(s):=\min\{n\in\N\, :\, n>s\}$ by
\begin{equation}\label{equat1.24}
|f|_{B_p^{s\theta}}:=\sup_{|\alpha|=\ell}\left\{\int_0^1\left(\frac{\omega_{k-\ell,\, p}(D^\alpha f;t)}{t^{s-\ell}}\right)^\theta\, \frac{dt}{t}\right\}^{\frac 1 \theta };
\end{equation}
here $\omega_{kp}(f;\cdot)$ is the $k$-th modulus of continuity of $f\in L_p$, given by, cf. \eqref{equa1.8}, 
\begin{equation}\label{equat1.25}
\omega_{kp}(f;t):=\sup_{\|h\|_\infty\le t}\left\{\|\Delta_h^k f\|_{L_p(Q^d_{kh})}\right\},
\end{equation}
where $Q^d_{kh}:=\{x\in Q^d\, :\, x+kh\in Q^d\}$, $\|h\|_\infty:=\max_{1\le i\le d}|h_i|$.

Now under the conditions 
\begin{equation}\label{equat1.26}
k=k(s),\quad 1\le p<q<\infty\quad {\rm and}\quad s>0
\end{equation}
on $k,p,q\in\kappa$ and $s:=s(\kappa)$ the following continuous embeddings are true
\begin{equation}\label{equat1.27}
\dot B_p^s:=\dot B_p^{sp}\subset \dot{\textsc{v}}_\kappa\subset \dot V_\kappa\subset \dot B_p^{s\infty}.
\end{equation}
For $q=\infty$ the right-hand side embedding remains to be true but that of the left-hand side is true for $\dot B_p^s$  replaced by $\dot B_p^{s1}$.

Moreover, for $p=\infty$, $q\le\infty$
 \begin{equation}\label{equa1.27}
\dot V_\kappa=\dot B_\infty^{s}.
\end{equation}
\begin{R}
{\rm (1) For $1<p\le 2$, the left embedding \eqref{equat1.27} can be sharpen by replacing $\dot B_p^s$ by the larger space $F_p^{s2}/\mathcal P_{k(s)-1}^d$; here $F_p^{s\theta}$ is the Triebel-Lizorkin space, see, e.g., \cite{Tr-92} for its definition.

\noindent (2) Using the real interpolation, see, e.g., \cite[Thm.\,6.4.3(1)]{BL-76} one can represent the space $\dot B_p^{s\infty}$ as an interpolating space of the couple $(\dot{\textsc{v}}_{\kappa_0},\dot{\textsc{v}}_{\kappa_1})$, where $\kappa_i:=\{s_i,p,q_i\}$, $1\le p<q_i$, $i=0,1$, and $s(\kappa_0)=s(\kappa_1)$. Under this conditions we have
\[
\dot B_p^{s\infty}=(\dot{\textsc{v}}_{\kappa_0},\dot{\textsc{v}}_{\kappa_1})_{\theta\infty},
\]
where $s=s(1-\theta)+s\theta$, $0<\theta<1$.
}
\end{R}

The paper is organized as follows.\smallskip

In Section 2, we define the predual to the space $V_\kappa$ denoted by $U_\kappa$ and that to $V_p^\lambda$ denoted by $U_p^\lambda$. Then we formulate the main results of the paper and some directly following applications to the classical spaces described in Subsections 1.3.1--1.3.4.

In Section 3, we prove two results on $C^\infty$ approximation of $V_\kappa$ functions formulated in Subsection~2.2. The first one plays an essential role in the proofs of our duality results while the second one provides an important  characterization of functions of the space $\dot{\textsc{v}}_\kappa$.

In Sections 4 and 5, we prove Theorem \ref{prop1.4} describing the basic properties of the space $U_\kappa$ and Theorem \ref{te1.11} asserting that
under mild restrictions on the parameters the spaces $U_\kappa^*$ and $V_\kappa$ are isometrically isomorphic.

Finally, in Section 6, we prove Theorem \ref{teo1.20} asserting that under some additional restrictions the spaces $\textsc{v}_\kappa^{*}$ and $U_\kappa$ are isometrically isomorphic.  Passing to duals in the obtained relation we get the ``two stars theorem'' stating that $\textsc{v}_{\kappa}^{**}$ and $V_\kappa$ are isometrically isomorphic.

\sect{Formulation of Main Results}
\subsection{Duality}
In the first part, we define and describe the basic properties of a Banach space 
that  under mild restrictions is a predual to the space $V_\kappa$ (recall that $\kappa=\{\lambda,p,q\}$, see Stipulation \ref{stip1.2}). We also briefly discuss here similar results for the spaces $V_p^\lambda$, $\lambda\ge 0$, see \eqref{equat2.10}, leaving the detail account to a forthcoming paper.

In the second part, we present two approximation results for functions of $V_\kappa$ spaces. The first one is essentially used in the proofs of the duality theorems while the second one in the applications concerning the function spaces presented in Subsections~1.3.1--1.3.4.

Finally, we formulate the applications and refer to known before special cases of the presented results.

In the forthcoming formulations, we use the following:
\begin{Notation}\label{equ2.5}
{\rm We write for linear (semi-)\,normed vector spaces
\begin{equation}\label{eq2.7}
X\hookrightarrow Y
\end{equation}
if there is a linear continuous injection of $X$ into $Y$, and replace $\hookrightarrow $ by $\subset$ if the injection embeds $X$ into $Y$ as a linear subspace.

Further, we say that these spaces are {\em isomorphic} and write
\begin{equation}\label{eq2.8}
X\cong Y
\end{equation}
if $X\hookrightarrow Y$ and $Y\hookrightarrow X$, and 
\[
X=Y
\] 
if, in addition, they coincide as linear spaces, hence, have equivalent (semi-)\,norms. 

Finally, spaces $X$ and $Y$  are said to be {\em isometrically isomorphic} if the injections in \eqref{eq2.8} are of norm $1$.
We write in this case 
\begin{equation}\label{eq2.8a}
X\equiv Y.
\end{equation}
}
\end{Notation}

\subsubsection{Space predual to $V_\kappa$}
The space under consideration denoted by $U_\kappa$ is constructed by using the following building blocks.
\begin{D}[$\kappa$-{\em atom}]\label{def2.1}
A function $a\in L_{q'}$ is said to be a $\kappa$-atom on a subcube $Q\subset Q^d$ if it satisfies the conditions
\begin{itemize}
\item[(i)] ${\rm supp}\, a\subset Q$;\smallskip
\item[(ii)] $\|a\|_{q'}\le |Q|^{-\lambda}$;\smallskip
\item[(iii)] $\displaystyle\int_{Q^d} x^\alpha a(x)\, dx=0$ for all $|\alpha|\le k-1$.
\end{itemize}
As above, $\kappa:=\{\lambda, p,q\}$ and $\frac 1q +\frac{1}{q'}=1$ for $1\le q\le\infty$.

The subject of the definition is denoted by $a_Q$.
\end{D}

Let us recall, see Stipulation \ref{stip1.2}, that $k$ and $Q^d$ are fixed and removed from almost all notations, e.g., $\|a\|_{q'}:=\|a\|_{L_{q'}(Q^d)}$.
\begin{D}[$\kappa$-chain]\label{def2.2}
A function $b\in L_{q'}$ is said to be a $\kappa$-chain subordinate to a packing $\pi\in\Pi$ if $b$ belong to the linear span of the family of $\kappa$-atoms $\{a_Q\}_{Q\in\pi}$.

The subject of this definition is denoted by $b_\pi$.
\end{D} 

Moreover, we write
\begin{equation}\label{eq2.1}
[b_\pi ]_{p'}:=\|\{c_Q\}_{Q\in\pi}\|_{p'}:=\left\{\sum_{Q\in\pi} |c_Q|^{p'}\right\}^{\frac{1}{p'}}
\end{equation}
whenever
\begin{equation}\label{eq2.2}
b_\pi=\sum_{Q\in\pi}c_Q\, a_Q.
\end{equation}
This clearly defines a norm on the linear span of the family $\{a_Q\}_{Q\in\pi}$.

Further, let $U_\kappa^0\subset L_{q'}$ denote the linear span of the set $\mathcal A_\kappa$ of all $\kappa$-atoms, i.e.,
\begin{equation}\label{eq2.3}
U_\kappa^0:={\rm linspan}\{a_Q\in \mathcal A_\kappa \}.
\end{equation}
Every $f\in U_\kappa^0$ can be represented (in infinitely many ways) as a finite sum of $\kappa$-chains by
\begin{equation}\label{eq2.4}
f=\sum_\pi b_\pi.
\end{equation}
The space $U_\kappa^0$ is equipped with the seminorm\footnote{in fact, we show that under mild restrictions on $\kappa$ \eqref{eq2.5} is a norm.}
\begin{equation}\label{eq2.5}
\|f\|_{U_\kappa^0}:=\inf\sum_\pi\, [b_\pi]_{p'},
\end{equation}
where infimum is taken over all representations \eqref{eq2.4}.
\begin{D}\label{def2.3}
The space $U_\kappa$ is the completion of the seminormed space $(U_\kappa^0,\|\cdot\|_{U_\kappa^0})$.
\end{D}

The next result describes the basic properties of the space $U_\kappa$.
\begin{Th}\label{prop1.4}
(a) The closed unit ball of $U_\kappa$ denoted by $B(U_\kappa)$
is the closure of the symmetric convex hull of the set
$\mathcal B_\kappa:=\{b_\pi\in U_\kappa^0\, :\, [b_\pi]_{p'}\le 1\}$.\smallskip

\noindent (b) If $p,q\in\kappa$ satisfy the conditions
\[
1<q\le\infty\quad {\rm and}\quad 1\le p\le\infty, 
\]
then $U_\kappa$ is separable.\smallskip

\noindent (c) If $p,q\in\kappa$ and $s:=s(\kappa)$ satisfy the conditions
\begin{equation}\label{eq2.6}
1<q\le\infty,\quad 1\le p\le\infty,\quad s\le k,
\end{equation}
then $U_\kappa$ is Banach.
\end{Th}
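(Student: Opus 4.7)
The plan is to derive (a) directly from the infimum defining $\|\cdot\|_{U_\kappa^0}$, obtain (b) from separability of $L_{q'}$ combined with a scaling step that replaces an arbitrary cube by a rational-vertex one, and deduce (c) by showing that $C^\infty$-functions furnish enough separating functionals on $U_\kappa^0$, which is possible exactly because $s(\kappa)\le k$ forces $C^\infty\subset\dot V_\kappa$.

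For (a), the seminorm $\|\cdot\|_{U_\kappa^0}$ is the Minkowski functional of the absolutely convex hull of $\mathcal B_\kappa$. Indeed, if $\|f\|_{U_\kappa^0}<1$, choose a representation $f=\sum_i b_{\pi_i}$ with $\lambda_i:=[b_{\pi_i}]_{p'}$ satisfying $\sum_i\lambda_i<1$; then $f=\sum_i\lambda_i\bigl(\lambda_i^{-1}b_{\pi_i}\bigr)$ exhibits $f$ in the absolutely convex hull of $\mathcal B_\kappa$. Conversely, any element $\sum_i\mu_i\tilde b_i$ with $\tilde b_i\in\mathcal B_\kappa$ and $\sum_i|\mu_i|\le1$ is itself a sum of chains with $[\cdot]_{p'}$-total at most $1$. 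Since negating the coefficients $c_Q$ keeps a chain inside $\mathcal B_\kappa$, the set is symmetric and the absolute convex hull coincides with the symmetric convex hull; passing to the completion and taking closure yields the description of $B(U_\kappa)$.

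For (b), the basic inequality is $\|b\|_{U_\kappa}\le|Q|^\lambda\|b\|_{q'}$ for any $b\in L_{q'}$ supported in a cube $Q$ and orthogonal to $\mathcal P_{k-1}^d$, because $b/(|Q|^\lambda\|b\|_{q'})$ is itself a $\kappa$-atom on $Q$. Combined with the observation that $(|Q|/|Q'|)^{\max(\lambda,0)}a_Q$ is a $\kappa$-atom on any enlargement $Q'\supset Q$ and is $U_\kappa$-close to $a_Q$ when $|Q'|/|Q|\to 1$, this reduces the approximation problem to cubes with rational vertices. Under $1<q\le\infty$ one has $1\le q'<\infty$, so $L_{q'}(Q)$ is separable and on each such cube there is a countable $L_{q'}$-dense family of $\kappa$-atoms; finite $\Q$-linear combinations from the countable union of these families then form a dense subset of $U_\kappa$.

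For (c), the completion of a seminormed space is automatically Banach once the seminorm is shown to be a norm, so it suffices to exclude nonzero $f\in U_\kappa^0$ with $\|f\|_{U_\kappa^0}=0$. The hypothesis $s(\kappa)\le k$ enters through the Taylor bound $E_{kq}(g;Q)\le C\|g\|_{C^k}|Q|^{k/d+1/q}$ for $g\in C^\infty$, together with the elementary packing estimate $\sum_{Q\in\pi}|Q|^{p(k/d+1/q-\lambda)}\le\sum_{Q\in\pi}|Q|\le1$, which is valid precisely when the exponent is at least $1$, i.e.\ when $k\ge s(\kappa)$. This yields $C^\infty\subset\dot V_\kappa$. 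Each $g\in C^\infty$ then defines a continuous functional $\Phi_g(f):=\int_{Q^d}gf\,dx$ on $U_\kappa^0$: for each chain $b_\pi=\sum_Q c_Q a_Q$, the vanishing moments of the atoms permit the substitution $g\mapsto g-m$ for any $m\in\mathcal P_{k-1}^d$ on each cube, and H\"older combined with the definition of $[b_\pi]_{p'}$ yields $|\Phi_g(f)|\le|g|_{V_\kappa}\|f\|_{U_\kappa^0}$. Since $C^\infty$ separates points of $L_{q'}\supset U_\kappa^0$, any nonzero $f$ forces $\|f\|_{U_\kappa^0}>0$. The main obstacle is precisely this step: if $s(\kappa)>k$, the packing sum in $|g|_{V_\kappa}$ generally diverges, so this route for generating separating functionals breaks down, which explains why the threshold $s(\kappa)\le k$ appears in the statement.
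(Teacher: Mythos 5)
Your proof is correct for all three parts, but the routes differ from the paper's in (a) and (b).

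For (a), your argument is genuinely simpler. The paper proves $B(U_\kappa)=\widehat{\mathcal B}_\kappa$ by a Hahn-Banach separation argument: assuming an element $h$ in the difference, it constructs a separating functional $F$, passes to a nearby $g\in U_\kappa^0$, decomposes $g$ into chains $\sum_\pi b_\pi$ with $\sum_\pi [b_\pi]_{p'}\le 1$, renormalizes to $b_\pi^*:=b_\pi/[b_\pi]_{p'}\in\mathcal B_\kappa$, and arrives at the contradiction $1-\varepsilon/2\le |F(g)|\le 1-\varepsilon$. You instead observe that $\|\cdot\|_{U_\kappa^0}$ is exactly the Minkowski functional of ${\rm absconv}(\mathcal B_\kappa)$, giving the inclusions $\{\|f\|<1\}\subset{\rm absconv}(\mathcal B_\kappa)\subset\{\|f\|\le 1\}$ directly from the definition of the infimum, and then pass to the completion. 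This is a cleaner derivation of the same fact.

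For (b), your argument is correct but substantially more circuitous than the paper's. The paper notices that $U_\kappa^0=\hat L_{q'}$ (the closed $L_{q'}$-subspace orthogonal to $\mathcal P_{k-1}^d$), since every such function is a one-atom chain subordinate to the single packing $\{Q^d\}$ with $\|f\|_{U_\kappa}\le\|f\|_{q'}$; separability of $L_{q'}$ for $q'<\infty$ then transfers immediately via this norm-$1$ embedding with dense image. Your localization to rational cubes and the scaling observation $(|Q|/|Q'|)^{\max(\lambda,0)}a_Q\in\mathcal A_\kappa(Q')$ are both correct, but the global one-cube observation renders them unnecessary. Since you do have the inequality $\|b\|_{U_\kappa}\le|Q|^\lambda\|b\|_{q'}$ in hand, applying it with $Q=Q^d$ (so $|Q^d|^\lambda=1$) would have shortened the argument considerably.

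For (c), you and the paper use essentially the same route: the Taylor estimate plus $s(\kappa)\le k$ gives $C^\infty\subset\dot V_\kappa$; the H\"older/vanishing-moments estimate gives $|\Phi_g(f)|\le|g|_{V_\kappa}\|f\|_{U_\kappa^0}$; and then one concludes that $\|f\|_{U_\kappa^0}=0$ forces $f=0$. The paper splits this last step into $q<\infty$ (density of $C^\infty$ in $L_q$) and $q=\infty$ (weak$^*$ density of $C^\infty$ in $L_\infty$, via its Lemma 4.3). Your phrasing via ``$C^\infty$ separates points of $L_{q'}$'' is a unified statement that covers both cases, since $f\in L_{q'}\subset L_1$ on the bounded domain and $\int fg\,dx=0$ for all $g\in C^\infty$ already implies $f=0$ a.e.; this is a mild streamlining. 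Your explanation of why $s(\kappa)\le k$ is the right threshold (the packing sum in $|g|_{V_\kappa}$ diverges when $s(\kappa)>k$) is accurate and consistent with the paper's Lemma 3.1.
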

Now we present a duality theorem for the space $V_\kappa$. 

\begin{Th}\label{te1.11}
If $p,q\in\kappa$ and $s:=s(\kappa)$ satisfy conditions \eqref{eq2.6},
then 
\[
U_\kappa^*\equiv V_\kappa.
\]
More precisely, each continuous linear functional on $U_\kappa$ has the form
\[
f(u)=\int_{Q^d}fu\,dx\quad {\rm for\ all}\quad u\in U_\kappa^0\, (\subset L_{q'}),
\]
where $f\in V_\kappa\, (\subset L_q)$ and $\|f\|_{V_\kappa}$ is equal to the linear functional norm.
\end{Th}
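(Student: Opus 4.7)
The plan is to prove two norm inequalities that together yield an isometric isomorphism with the claimed pairing.

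\textbf{The easy direction $V_\kappa \hookrightarrow U_\kappa^*$.} Given $f \in \dot V_\kappa$, I define $L_f(u) := \int_{Q^d} fu\,dx$ on $U_\kappa^0$. For a $\kappa$-atom $a_Q$, property (iii) of Definition~\ref{def2.1} lets me replace $f$ by $f-m$ for any $m \in \mathcal P_{k-1}^d$; H\"older's inequality together with property (ii) and infimization over $m$ then gives $|L_f(a_Q)| \le |Q|^{-\lambda} E_{kq}(f;Q)$. For a chain $b_\pi = \sum_{Q\in\pi}c_Q a_Q$, H\"older in $\ell^p/\ell^{p'}$ yields
\[
|L_f(b_\pi)| \le \sum_{Q\in\pi}|c_Q|\,|Q|^{-\lambda}E_{kq}(f;Q) \le [b_\pi]_{p'}\cdot|f|_\kappa,
\]
and infimizing over the representations \eqref{eq2.4} produces $|L_f(u)| \le |f|_\kappa\|u\|_{U_\kappa^0}$. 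By (iii), $L_f$ vanishes on $\mathcal P_{k-1}^d$, hence descends to $V_\kappa$ and extends by density to $U_\kappa$ with $\|L_f\|_{U_\kappa^*} \le \|f\|_{V_\kappa}$.

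\textbf{The hard direction $U_\kappa^* \hookrightarrow V_\kappa$: representation.} Let $L \in U_\kappa^*$. The key observation is that any nonzero $g$ in the subspace $L_{q',0} := \{g\in L_{q'}:\int x^\alpha g\,dx = 0,\ |\alpha|\le k-1\}$ admits the single-atom decomposition $g = \|g\|_{q'}\cdot(g/\|g\|_{q'})$; since $|Q^d|=1 = |Q^d|^{-\lambda}$, the second factor is a $\kappa$-atom on $Q^d$, so $\|g\|_{U_\kappa^0} \le \|g\|_{q'}$. Hence $L|_{L_{q',0}}$ is bounded with $L_{q'}$-norm at most $\|L\|$. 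The hypothesis $1<q\le\infty$ gives $q' <\infty$, so Hahn--Banach extension to $L_{q'}$ followed by the Riesz representation of $(L_{q'})^*$ yields $f\in L_q$ with $L(g) = \int fg\,dx$ for all $g\in L_{q',0}$. Two such representatives differ by an element of the annihilator $\mathcal P_{k-1}^d$ of $L_{q',0}$, so the class $[f]\in L_q/\mathcal P_{k-1}^d$ is canonical.

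\textbf{The hard direction: variational bound.} To control $|f|_\kappa$, I fix a packing $\pi\in\Pi$ and, for each $Q\in\pi$, a test function $g_Q\in L_{q',0}(Q)$ with $\|g_Q\|_{q'}\le 1$. For scalars $\{c_Q\}$ the chain $b_\pi := \sum_{Q\in\pi} c_Q |Q|^{-\lambda}g_Q$ satisfies $[b_\pi]_{p'} = \|\{c_Q\}\|_{p'}$ (each $|Q|^{-\lambda}g_Q$ is a $\kappa$-atom), and $g_Q$ extended by zero lies in $L_{q',0}(Q^d)$, so
\[
\Bigl|\sum_{Q\in\pi} c_Q|Q|^{-\lambda}\!\!\int_Q fg_Q\,dx\Bigr| = |L(b_\pi)| \le \|L\|\cdot\|\{c_Q\}\|_{p'}.
\]
Duality $\ell^{p'}\leftrightarrow\ell^p$ (sup over $\{c_Q\}$) and the annihilator identity $E_{kq}(f;Q) = \sup\{|\int_Q fg|:g\in L_{q',0}(Q),\,\|g\|_{q'}\le 1\}$ (sup over $\{g_Q\}$) jointly give $\bigl(\sum_{Q\in\pi}(|Q|^{-\lambda}E_{kq}(f;Q))^p\bigr)^{1/p}\le\|L\|$. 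Supremizing over $\pi$ yields $\|f\|_{V_\kappa}\le\|L\|$, while $L_f$ and $L$ agree on $U_\kappa^0$ by construction, so the two maps are mutually inverse isometries.

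\textbf{Main obstacle.} The technical heart is the coupled duality step: packing-level $\ell^{p'}/\ell^p$ duality has to be combined with cube-level $L_{q'}/L_q$ duality in a way that produces exactly $E_{kq}(f;Q)$ (via its interpretation as the $L_q$-distance to $\mathcal P_{k-1}^d|_Q$, equivalently the norm on the annihilator quotient). Secondary care is needed at two spots: the hypothesis $1<q\le\infty$ is essential to keep $q'<\infty$ so that Riesz's theorem delivers an $L_q$ representative, and the restriction $s(\kappa)\le k$ rules out the degenerate case $\dot V_\kappa = \mathcal P_{k-1}^d$ (Lemma~\ref{lem3.1}), in which both spaces collapse and the isomorphism is tautological.
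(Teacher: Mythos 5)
Your proof is correct and takes essentially the same route as the paper's: the easy direction via H\"older on atoms and chains, and the hard direction via the observation that $U_\kappa^0$ coincides with the moment-free subspace $\hat L_{q'}=L_{q',0}$ and embeds in $U_\kappa$ with norm $\le 1$ (the single-atom decomposition on $Q^d$), followed by Hahn--Banach/Riesz representation of the restricted functional and the packing--chain duality recovering $|f|_{V_\kappa}$. A minor advantage of your version is that the supremum formulation of the duality for $E_{kq}(f;Q)$ sidesteps the paper's appeal to Lemma~\ref{le3.2}, which is stated only for exponents $r<\infty$ and therefore does not literally supply extremizing atoms when $q=\infty$.
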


\subsubsection{Two Stars Theorem}

Using the properties of $U_\kappa$ and $V_\kappa$ presented here and in the next subsection  and some basic facts of the Banach space theory we prove the following:
\begin{Th}\label{teo1.20}
Let $p,q\in\kappa$ and $s:=s(\kappa)$ satisfy the conditions
\begin{equation}\label{eq2.12}
1< p\le\infty ,\quad 1<q<\infty\quad {\rm and}\quad s<k.
\end{equation}
Then  
\[
\textsc{v}_\kappa^{*}\equiv U_\kappa.
\]
\end{Th}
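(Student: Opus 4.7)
The plan is to show that the natural restriction map
\[
J:U_\kappa\to\textsc{v}_\kappa^*,\qquad J(u)(f):=\int_{Q^d}fu\,dx,\quad u\in U_\kappa,\ f\in\textsc{v}_\kappa,
\]
is an isometric isomorphism. It is well-defined and contractive, since by Theorem \ref{te1.11} every $u\in U_\kappa$ acts as a bounded linear functional on $V_\kappa$ of norm $\|u\|_{U_\kappa}$, and $\textsc{v}_\kappa$ is a norm-closed subspace of $V_\kappa$.

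The first main step is that $J$ is isometric. This uses the weak$^*$-denseness of $\textsc{v}_\kappa$ in $V_\kappa = U_\kappa^*$ announced in the abstract and established (in a norm-preserving form) by the first $C^\infty$-approximation result of Section 3: for every $f\in V_\kappa$ there is a sequence in $C^\infty\cap\dot V_\kappa\subset\dot{\textsc{v}}_\kappa$ with $V_\kappa$-norms bounded by $(1+\varepsilon)\|f\|_{V_\kappa}$ converging to $f$ in $\sigma(V_\kappa,U_\kappa)$. Thus $B(\textsc{v}_\kappa)$ is weak$^*$-dense in $B(V_\kappa)$, and combining with the formula $\|u\|_{U_\kappa}=\sup\{|\int_{Q^d}fu\,dx|:f\in B(V_\kappa)\}$ coming from Theorem \ref{te1.11} yields $\|J(u)\|_{\textsc{v}_\kappa^*}=\|u\|_{U_\kappa}$.

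The second and more delicate step is surjectivity of $J$. Given $\varphi\in\textsc{v}_\kappa^*$, I would extend by Hahn-Banach to $\tilde\varphi\in V_\kappa^*=U_\kappa^{**}$ with $\|\tilde\varphi\|=\|\varphi\|$. It remains to show that $\tilde\varphi$ lies in the canonical image of $U_\kappa$ inside $U_\kappa^{**}$, equivalently that $\tilde\varphi$ is $\sigma(V_\kappa,U_\kappa)$-continuous on $V_\kappa$. By the Krein-Smulian theorem this reduces to weak$^*$-continuity on the weak$^*$-compact ball $B(V_\kappa)$, and, using the separability of $U_\kappa$ (Theorem \ref{prop1.4}(b), valid under \eqref{eq2.12}), to sequential weak$^*$-continuity there. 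Under \eqref{eq2.12} the second $C^\infty$-approximation result of Section 3, which characterizes $\textsc{v}_\kappa$ by a uniform smallness condition on the local polynomial approximations on small cubes, together with the strict inequality $s<k$, yields a quantitative approximation of any weak$^*$-convergent sequence in $V_\kappa$ by a sequence in $\textsc{v}_\kappa$ with matching $\varphi$-values in the limit; this transfers the norm-continuity of $\varphi$ on $\textsc{v}_\kappa$ into the required sequential weak$^*$-continuity of $\tilde\varphi$.

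The principal obstacle is precisely this weak$^*$-continuity verification: upgrading an abstract Hahn-Banach extension in $U_\kappa^{**}$ to an element of the smaller subspace $U_\kappa\subset U_\kappa^{**}$. It is here that the restrictions $1<q<\infty$, $1<p\le\infty$, and most importantly $s<k$ (rather than merely $s\le k$ as in Theorem \ref{te1.11}) enter, via the characterization of $\textsc{v}_\kappa$ proved in Section 3, whose quantitative form fails at $s=k$ or $q\in\{1,\infty\}$. Once $\textsc{v}_\kappa^*\equiv U_\kappa$ is established, dualizing and invoking Theorem \ref{te1.11} immediately yields the two stars theorem $\textsc{v}_\kappa^{**}\equiv V_\kappa$.
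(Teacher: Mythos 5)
Your first step (isometry of the natural map $U_\kappa\to\textsc{v}_\kappa^*$) is correct and follows the same route as the paper's Proposition \ref{prop6.1}\,(a), resting on the weak$^*$ density of $\textsc{v}_\kappa$ in $V_\kappa=U_\kappa^*$, which is Lemma \ref{lem6.2} and ultimately Theorem \ref{teo2.12}. The surjectivity step, however, has a genuine gap. You correctly identify the difficulty as showing that a Hahn-Banach extension $\tilde\varphi\in V_\kappa^*=U_\kappa^{**}$ of $\varphi\in\textsc{v}_\kappa^*$ is $\sigma(V_\kappa,U_\kappa)$-continuous, but you offer no mechanism that would make this so. The problem is that Hahn-Banach gives an \emph{arbitrary} norm-preserving extension, and such an extension need not be weak$^*$-continuous; the existence of a weak$^*$-continuous extension is equivalent to what you are trying to prove, so the argument is circular unless a specific, tractable extension is constructed. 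Your proposed bridge -- ``the second $C^\infty$-approximation result \ldots yields a quantitative approximation of any weak$^*$-convergent sequence in $V_\kappa$ by a sequence in $\textsc{v}_\kappa$ with matching $\varphi$-values in the limit'' -- does not hold up: Theorem \ref{teor2.13} characterizes which elements of $V_\kappa$ lie in $\textsc{v}_\kappa$, but it gives no control over $\tilde\varphi$-values of nearby (in the weak$^*$ topology) points, and weak$^*$-close points in $V_\kappa$ are not norm-close, so norm-continuity of $\varphi$ on $\textsc{v}_\kappa$ cannot be ``transferred'' this way. Indeed, the paper does not use Theorem \ref{teor2.13} in the proof of Theorem \ref{teo1.20} at all.

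What the paper actually does, and what is missing from your proposal, is the weak$^*$-compactness of the set $\mathfrak i^*(\bar{\mathcal B}_\kappa)$ (Proposition \ref{prop6.4}), established by the diagonal-extraction arguments of Lemma \ref{lem6.6}, Lemma \ref{lem6.7} and Statement \ref{stat6.5}; this is where the hypotheses $1<p$, $1<q<\infty$ (reflexivity of $\ell_{p'}$ and $L_{q'}$) and $s<k$ (the decay $|Q|^{(k-s)/d}\to 0$ in \eqref{eq6.25}) really enter, not Theorem \ref{teor2.13}. With this compactness in hand, the paper isometrically embeds $\textsc{v}_\kappa$ into $C\bigl(\mathfrak i^*(\bar{\mathcal B}_\kappa)\bigr)$ (Lemma \ref{lem6.8}), extends $\varphi$ there by Hahn-Banach, applies the Riesz representation theorem to obtain a \emph{concrete} integral formula $\phi_{v^*}(v)=\int v\circ\tau\,d\mu_{v^*}$, and then verifies weak$^*$-continuity of $\phi_{v^*}$ directly from this integral representation via Krein-Smulian and dominated convergence (Lemmas \ref{lem6.9}, \ref{lem6.10}). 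The Krein-Smulian reduction you invoke is thus correct in spirit, but it is applied by the paper to the specific functional $\phi_{v^*}$ with its integral representation, not to an abstract Hahn-Banach extension. Without the compactness result and the $C(K)$/measure construction, the surjectivity argument does not close.
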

Specifically, the result asserts that under the identification of $\textsc{v}_\kappa$ with a subspace of $U_\kappa^*$ by Theorem \ref{te1.11} each continuous linear functional on $\textsc{v}_\kappa$ has the form $T_u(f)=f(u)$ for all $f\in \textsc{v}_\kappa$, where $u\in U_\kappa$  and $\|u\|_{U_\kappa}$ is equal to the linear functional norm.

From here and Theorem \ref{te1.11} we obtain:
\begin{C}\label{cor2.8}
Under conditions \eqref{eq2.12}
\begin{equation}\label{eq2.18a}
 \textsc{v}_\kappa^{**}\equiv V_\kappa.
 \end{equation}
 \end{C}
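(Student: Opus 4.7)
The proof is essentially a two-line chaining of Theorems \ref{te1.11} and \ref{teo1.20}, so the plan is to verify that hypotheses line up and then dualize. First I would observe that conditions \eqref{eq2.12} imply conditions \eqref{eq2.6}: $1<p\le\infty$ clearly gives $1\le p\le\infty$, $1<q<\infty$ gives $1<q\le\infty$, and $s<k$ gives $s\le k$. Hence both theorems are applicable under the hypotheses of the corollary.

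Now by Theorem \ref{teo1.20} one has the isometric isomorphism $\textsc{v}_\kappa^{*}\equiv U_\kappa$. Taking Banach-space duals on both sides preserves isometric isomorphism (the adjoint of a surjective linear isometry is itself a surjective linear isometry), so
\[
\textsc{v}_\kappa^{**}\equiv U_\kappa^{*}.
\]
Applying Theorem \ref{te1.11}, which under \eqref{eq2.6} gives $U_\kappa^{*}\equiv V_\kappa$, and composing the two isometric isomorphisms yields $\textsc{v}_\kappa^{**}\equiv V_\kappa$.

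The only thing to check — and this is where any subtlety sits rather than in any real obstacle — is that the composed isometry matches the canonical bidual embedding when restricted to $\textsc{v}_\kappa$. This is a compatibility statement: under Theorem \ref{te1.11}, $f\in V_\kappa$ acts on $U_\kappa$ by $u\mapsto \int_{Q^d} fu\,dx$; under Theorem \ref{teo1.20}, $u\in U_\kappa$ acts on $\textsc{v}_\kappa$ by $f\mapsto \int_{Q^d} fu\,dx$. Thus for $f\in\textsc{v}_\kappa$, its image in $\textsc{v}_\kappa^{**}$ evaluated at $u\in U_\kappa$ is $\int_{Q^d} fu\,dx$, which is exactly the action of $f\in\textsc{v}_\kappa\subset V_\kappa$ on $u$. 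So the identification $\textsc{v}_\kappa^{**}\equiv V_\kappa$ extends the natural inclusion $\textsc{v}_\kappa\subset V_\kappa$, as desired. No further estimates or approximation arguments are required — the substantive work has already been done in Theorems \ref{te1.11} and \ref{teo1.20}.
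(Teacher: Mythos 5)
Your proof is correct and is exactly the paper's (implicit) argument: the corollary is stated in the paper as an immediate consequence of Theorems \ref{te1.11} and \ref{teo1.20} by dualizing the latter and composing. Your added compatibility check that the composed isometry extends the canonical embedding $\textsc{v}_\kappa\hookrightarrow\textsc{v}_\kappa^{**}$ is a sensible sanity check but not required for the statement as written.
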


\begin{R}
{\rm 
The restriction $q>1$ is necessary. In fact, the space $V_\kappa$ with $\kappa=\{0,1,1\}$, hence,
$s(\kappa)=0<k$, is isometrically isomorphic to the space $L_1/\mathcal P_{k-1}^d\cong L_1$, see \eqref{equa1.20}, that is not dual.

 The restriction $s(\kappa)<k$ is necessary as well, see Remark \ref{rem2.15} below.}
\end{R}

\subsubsection{Space predual to $V_p^\lambda$} Let us recall that $\dot V_p^\lambda$ is defined by seminorm \eqref{equat2.10},
\[
V_p^\lambda:=\dot V_p^\lambda/\mathcal P_{k-1}^d\quad {\rm and}\quad \textsc{v}_p^\lambda:={\rm clos}(C^\infty/\mathcal P_{k-1}^d,V_p^\lambda).
\]

To define a predual to $V_p^\lambda$ denoted by $U_p^\lambda$
we follow the scheme of Subsection~2.1.1 that begins with the definition of atoms. The required version is motivated by the equality $\ell_1^*=\ell_\infty$, where $\ell_1$ is defined by a norm given for $f:Q^d\rightarrow\RR$ by
\begin{equation}\label{equat2.13}
\|f\|_{\ell_1}:=\sum_{x\in Q^d}|f(x)|.
\end{equation}
The space $\ell_1$ is nonseparable but every $f\in\ell_1$ has at most countable support. This leads to the following:
\begin{D}\label{def2.4a}
A function $a_Q:Q^d\rightarrow\RR$ is said to be a $(\lambda, p)$-atom on $Q\subset Q^d$ if $a_Q$ is supported by $Q$ and satisfies the conditions
\begin{itemize}
\item[(i)] 
\[
\|a_Q\|_{\ell_1}\le |Q|^{-\lambda};
\]
\item[(ii)]
\[
 \sum_{x\in Q^d} m(x)a_Q(x)=0\quad {\rm for\ every}\quad m\in\mathcal P_{k-1}^d.
 \]
\end{itemize}
\end{D}
Having this we repeat word-for-word definitions of Subsection 2.1.1 to introduce $(\lambda,p)$-chains and their norms, see \eqref{eq2.1} and \eqref{eq2.2}, preserving the very same notations.

Further, $(U_p^\lambda)^0\subset\ell_1$ is the linear span of the set $\mathcal A_{\{\lambda,p\}}$ of all $(\lambda,p)$-atoms, i.e.,
\begin{equation}\label{equat2.14}
(U_p^\lambda)^0:={\rm linspan}\{a_Q\in \mathcal A_{\{\lambda,p\}}\}.
\end{equation}
As above, see \eqref{eq2.5}, this space is equipped with the seminorm $\|\cdot\|_{(U_p^\lambda)^0}$.

Finally, the completion of $(U_p^\lambda)^0$ under this seminorm gives the required space $U_p^\lambda$.

The basic result for this case asserts:
\begin{Th}\label{teor2.11}
(a)  Let the parameters of the space $V_p^\lambda$ and its smoothness $s:=d(\lambda+\frac 1 p)$ satisfy the conditions
\begin{equation}\label{e2.14}
\lambda\ge 0,\quad 1<p\le\infty\quad {\rm and}\quad s\le k.
\end{equation}
Then $(U_p^\lambda)^*\equiv V_p^\lambda$.\smallskip

\noindent (b) If $\lambda, p$ satisfy \eqref{e2.14} and $s<k$, then 
$(\textsc{v}_p^\lambda)^*\equiv U_p^\lambda$ if $\lambda>0$ and 
$(\textsc{v}_p^0)^*\cong U_p^0$.
\end{Th}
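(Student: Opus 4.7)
The plan is to adapt the proofs of Theorems \ref{te1.11} and \ref{teo1.20} to the ``$q=\infty$'' setting of the $V_p^\lambda$ spaces, replacing the $L_q$--$L_{q'}$ pairing by the pairing
$$\langle f, a \rangle := \sum_{x\in Q^d} f(x)\,a(x)$$
between $f\in\ell_\infty(Q^d)\supset \dot V_p^\lambda$ and $a\in\ell_1(Q^d)\supset (U_p^\lambda)^0$; the sum is finite because $a$ has countable support by Definition \ref{def2.4a}. This converts the $L_q$-duality scheme into an $\ell_\infty(Q^d)$--$\ell_1(Q^d)$ duality scheme, after which the arguments of Sections 4--6 can be rerun with straightforward modifications. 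Throughout I assume the $C^\infty$-approximation results announced in Section 3 and the atomic description of $B(U_\kappa)$ from Theorem \ref{prop1.4}(a), suitably restated for $U_p^\lambda$.

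\textbf{Part (a).} For the upper estimate I would proceed atom by atom: the moment-vanishing condition (ii) of Definition \ref{def2.4a} allows subtracting any $m\in\mathcal P_{k-1}^d$ from $f$ before pairing with $a_Q$, giving $|\langle f,a_Q\rangle|\le E_k(f;Q)\,\|a_Q\|_{\ell_1}\le E_k(f;Q)\,|Q|^{-\lambda}$. A H\"older estimate on a chain $b_\pi=\sum_{Q\in\pi}c_Q\,a_Q$ then yields $|\langle f,b_\pi\rangle|\le [b_\pi]_{p'}\,|f|_{V_p^\lambda}$, so $f$ defines a functional on $U_p^\lambda$ of norm at most $|f|_{V_p^\lambda}$. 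The reverse estimate is obtained by choosing a near-optimal packing $\pi$, selecting for each $Q$, via Hahn--Banach applied to $\ell_\infty(Q)/\mathcal P_{k-1}^d$, an atom $a_Q$ with $\langle f,a_Q\rangle\approx E_k(f;Q)\,|Q|^{-\lambda}$, and assembling the chain with dual $\ell_{p'}$-coefficients. Surjectivity is the most delicate part: given $F\in (U_p^\lambda)^*$, one extends $F$ by Hahn--Banach from $(U_p^\lambda)^0\subset\ell_1(Q^d)$ to $\ell_1(Q^d)$ itself, controlling the extension norm via Theorem \ref{prop1.4}(a), and then invokes $\ell_1(Q^d)^{*}=\ell_\infty(Q^d)$ to extract a representative $\hat f$, whose $V_p^\lambda$-seminorm is bounded by $\|F\|$ upon testing against single chains.

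\textbf{Part (b).} Here I follow the blueprint of Theorem \ref{teo1.20}. Restricting the functionals of part~(a) to $\textsc{v}_p^\lambda\subset V_p^\lambda$ gives a canonical map $U_p^\lambda\to(\textsc{v}_p^\lambda)^*$; using the $C^\infty$-approximation results of Section 3 (giving weak$^*$-density of $C^\infty\cap\dot V_p^\lambda$ in $\dot V_p^\lambda$) together with the atomic description of $B(U_p^\lambda)$, this map is an isometry when $\lambda>0$. For surjectivity one extends a given $G\in(\textsc{v}_p^\lambda)^*$ by Hahn--Banach to $V_p^\lambda=(U_p^\lambda)^*$ and identifies the extension with an element of $(U_p^\lambda)^{**}$; a weak$^*$-compactness argument combining Goldstine's theorem with the weak$^*$-density of $\textsc{v}_p^\lambda$ in $V_p^\lambda$ then forces the representing functional to lie in the canonical image of $U_p^\lambda$, with norms preserved. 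The factor $|Q|^{-\lambda}$ with $\lambda>0$ provides the necessary scale-decay that keeps this compactness argument quantitatively sharp.

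\textbf{Main obstacle.} The principal technical difficulty is the $\lambda=0$ case, where the absence of the $|Q|^{-\lambda}$ decay factor breaks the quantitative part of the compactness argument: although one still produces a bounded representation in $U_p^0$ and hence an embedding $(\textsc{v}_p^0)^*\hookrightarrow U_p^0$, upgrading this to an isomorphism requires a separate argument. The natural route is to apply the open mapping theorem to the adjoint of the canonical map $U_p^0\to(\textsc{v}_p^0)^*$ after verifying it is injective with dense range, accepting the loss of the isometry constant. A careful inspection of the $C^\infty$-approximation constants from Section 3 is needed to confirm that the composite norm equivalence constants remain finite, which is what prevents the $\lambda=0$ conclusion from being sharpened to $\equiv$.
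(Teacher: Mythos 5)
The paper does \emph{not} in fact prove Theorem~\ref{teor2.11}: the opening of Subsection~2.1 states that the duality results for $V_p^\lambda$ are only ``briefly discussed'' and that ``the detail account'' is left to a forthcoming paper, and Sections~4--6 prove only Theorems~\ref{prop1.4}, \ref{te1.11} and \ref{teo1.20}, all of which concern $V_\kappa$ and $U_\kappa$. So there is no in-paper proof to compare to. Judged on its own, your plan of ``rerun Sections~4--6 with the $\ell_\infty$--$\ell_1$ pairing'' is the natural first guess, and your part~(a) sketch is essentially sound once ``Hahn--Banach on $\ell_\infty(Q)/\mathcal P_{k-1}^d$'' is replaced by the correct predual duality $\ell_\infty(Q)/\mathcal P_{k-1}^d\equiv(\hat\ell_1(Q))^*$ (Hahn--Banach on $\ell_\infty(Q)$ produces a functional in $\ell_\infty(Q)^*$, which need not lie in $\ell_1(Q)$; you only obtain $\varepsilon$-near-optimizing atoms, which you already signal with your ``$\approx$'').

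Part~(b), however, leaves untouched exactly the steps of Theorem~\ref{teo1.20} that are genuinely $q$-dependent. First, Lemma~\ref{lem6.6}(c) extracts weakly convergent subsequences from bounded sequences of atoms using reflexivity of $L_{q'}$, $1<q<\infty$; this breaks for $\ell_1$, which has the Schur property, so one must replace weak convergence by weak$^*$ convergence of $\ell_1$ regarded as the dual of $c_0$ over a fixed countable union of atom supports --- a different mechanism that you do not supply. Second, your blanket assumption that the $C^\infty$-approximation results of Section~3 transfer is not automatic: Theorem~\ref{teo2.12} and the mollifiers $f_n=T_n f$ are built for $f\in L_q(Q^d)$ and require $f$ to be measurable, whereas $\dot V_p^\lambda$ is defined over $\ell_\infty(Q^d)$ and a priori contains non-measurable functions (Remark~\ref{remark1.5} explicitly warns that $\dot V_p^\lambda$ cannot simply be identified with $\dot V_{\{\lambda,p,\infty\}}$); establishing the approximation and weak$^*$-density for $V_p^\lambda$ is itself a nontrivial step, not an input one can ``assume.'' Third, for $\lambda=0$ your fallback is to ``apply the open mapping theorem \dots after verifying it is injective with dense range'' --- but a bounded injection with dense range between Banach spaces is not automatically surjective, so this does not by itself yield $(\textsc{v}_p^0)^*\cong U_p^0$; you would need a closed-range (or a bespoke compactness) argument that is not sketched. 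Finally, Remark~\ref{rem2.15} announces that the forthcoming paper proves $V_\kappa\hookrightarrow V_p^\lambda$ and $\textsc{v}_\kappa\equiv\textsc{v}_p^\lambda$ for $\kappa=\{\lambda,p,\infty\}$, which strongly suggests the authors' intended route is to \emph{reduce} Theorem~\ref{teor2.11} to the already-proved $V_\kappa$ results via these identifications rather than redo Sections~4--6 in the $\ell_\infty$--$\ell_1$ setting; that reduction is absent from your proposal and seems to be where the real work lies.
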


As a corollary we obtain the corresponding two stars theorem: $(\textsc{v}_p^\lambda)^{**}\equiv V_p^\lambda$ if $\lambda>0$ and  $(\textsc{v}_p^0)^{**}\cong V_p^0$.

\subsection{Approximation Theorems} We present here two results on $C^\infty$ approximation of $V_\kappa$ functions. 

The first result plays an essential role in the proofs of the duality theorems.
\begin{Th}\label{teo2.12}
(a) For each function $f\in\dot V_\kappa$, $\kappa:=\{\lambda,p,q\}$,  there is a sequence $\{f_n\}_{n\in\N}\subset C^\infty$ linearly depending on $f$ such that
\begin{equation}\label{e2.16}
\lim_{n\rightarrow\infty}|f_n|_{V_\kappa}=|f|_{V_\kappa}.
\end{equation}
(b) Moreover,
\begin{equation}\label{e2.17}
\lim_{n\rightarrow\infty}\|f-f_n\|_q=0\quad {\rm if}\quad 1\le q<\infty
\end{equation}
and
\begin{equation}\label{e2.18}
\lim_{n\rightarrow\infty}\int_{Q^d}(f-f_n)g\,dx=0\quad {\rm for\ each}\quad g\in L_1\quad {\rm if}\quad q=\infty.
\end{equation}
\end{Th}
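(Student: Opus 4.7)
My plan is to construct the $f_n$ via a \emph{shrink-and-mollify} procedure that avoids any boundary extension of $f$. Fix a nonnegative $C^\infty$ kernel $\phi$ supported in $[-1,1]^d$ with $\int\phi=1$, set $\phi_\varepsilon(y):=\varepsilon^{-d}\phi(y/\varepsilon)$, and let $c:=(\tfrac12,\ldots,\tfrac12)$ denote the center of $Q^d$. With $s_n:=1+2/n$, the affine map $T_nx:=c+s_n(x-c)$ carries $Q^d$ onto the concentric enlarged cube $Q^d_n:=c+s_n(Q^d-c)=[-1/n,1+1/n]^d$, and the dilate $\tilde f_n(x):=f\bigl(c+(x-c)/s_n\bigr)$ is a well-defined function on $Q^d_n$. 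Since $Q^d+[-1/n,1/n]^d\subset Q^d_n$, the convolution $f_n:=\tilde f_n*\phi_{1/n}$ is well defined and $C^\infty$ on a neighborhood of $Q^d$, and the assignment $f\mapsto f_n$ is linear.

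The key estimate of part (a) is
\[
|f_n|_{V_\kappa}\le s_n^{d(1/q-\lambda)}\,|f|_{V_\kappa},
\]
proved by combining the integral Minkowski inequality with the translation invariance of $E_{kq}$. For any (measurably chosen) $p_y\in\mathcal P^d_{k-1}$ with $|y|_\infty\le 1/n$, the polynomial $P(x):=\int p_y(x-y)\,\phi_{1/n}(y)\,dy$ again lies in $\mathcal P^d_{k-1}$; writing $f_n-P=\int[\tilde f_n(\cdot-y)-p_y(\cdot-y)]\phi_{1/n}(y)\,dy$, taking $L_q(Q)$-norms and optimizing over the $p_y$ yields
\[
E_{kq}(f_n;Q)\le\int E_{kq}(\tilde f_n;Q-y)\,\phi_{1/n}(y)\,dy.
\]
Raising this to the $p$-th power, invoking Jensen's inequality against the probability density $\phi_{1/n}\,dy$ (with the obvious sup-replacement when $p=\infty$), multiplying by $|Q|^{-\lambda p}$, and summing over an arbitrary packing $\pi\in\Pi$, one interchanges sum and integral. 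For each $|y|_\infty\le 1/n$ the translated family $\{Q-y:Q\in\pi\}$ is still a packing of nonoverlapping subcubes of $Q^d_n$, so the inner sum is bounded by the natural analogue of $|\,\cdot\,|_{V_\kappa}^p$ formed from packings of $Q^d_n$ and evaluated at $\tilde f_n$. A direct change of variables (using affine invariance of $\mathcal P^d_{k-1}$) shows this quantity equals $s_n^{d(1/q-\lambda)}\,|f|_{V_\kappa}$, which tends to $|f|_{V_\kappa}$ as $s_n\to 1$. The matching lower bound is pure lower semicontinuity: each packing is finite and $E_{kq}(\,\cdot\,;Q)$ is $1$-Lipschitz on $L_q(Q)$ (and weak-$*$ lower semicontinuous when $q=\infty$), so for any fixed $\pi$ the associated sum is continuous in $f_n$ under the mode of convergence supplied by part (b); taking the supremum over $\pi$ gives $|f|_{V_\kappa}\le\liminf_n|f_n|_{V_\kappa}$.

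Part (b) comes from the decomposition $\|f-f_n\|_q\le\|f-\tilde f_n\|_q+\|\tilde f_n-\tilde f_n*\phi_{1/n}\|_q$: continuity of dilation in $L_q$ kills the first term for $1\le q<\infty$ and the classical mollification theorem kills the second, while for $q=\infty$ the same two steps, tested against $g\in L_1$ and combined with dominated convergence, deliver the required weak-$*$ statement. The principal obstacle lies entirely inside part (a): the dilation parameter $s_n$ and the mollifier scale $1/n$ must be coordinated so that the translated packing $\{Q-y:Q\in\pi\}$ both remains a packing and stays inside the domain $Q^d_n$ of $\tilde f_n$ for every $|y|_\infty\le 1/n$. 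This is exactly what the buffer $s_n=1+2/n$ is chosen to provide; without it the Minkowski--Jensen chain cannot be closed. Once the buffer is secured, the remaining scaling identity is a routine change of variables.
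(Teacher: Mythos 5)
Your construction and strategy coincide essentially with the paper's: both proofs define $f_n$ by combining a contractive dilation centred at $c_{Q^d}$ with a mollification on the buffer zone thereby created (the paper mollifies the zero-extension $f^0$ and then composes with the contraction $a_n$, with $\lambda_n=n/(n+1)$ playing the role of your $1/s_n$; you dilate the domain first and then mollify), establish the one-sided estimate $|f_n|_{V_\kappa}\le\lambda_n^{d(\lambda-1/q)}|f|_{V_\kappa}$ by an integral Minkowski/Jensen argument on the local approximations, and obtain the matching lower bound from continuity of $E_{kq}(\cdot\,;Q)$ on $L_q(Q)$ for $q<\infty$ and weak$^*$ lower semicontinuity for $q=\infty$. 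Two places in your sketch would need a bit more care when written out, though neither changes the route: for part (b), $q<\infty$, the phrase ``the classical mollification theorem kills the second term'' glosses over the fact that both the function $\tilde f_n$ and the mollification scale $1/n$ move with $n$; one must reduce, via Minkowski's inequality and a change of variables, to the $L_q(Q^d)$ modulus of continuity of the \emph{fixed} function $f$ evaluated at scale $1/(n+2)$ (the paper instead sidesteps this by interpolating through a $C_0^\infty$ function $v$ close to $f$ in $L_q$). Similarly, ``$E_{k\infty}(\cdot\,;Q)$ is weak$^*$ lower semicontinuous'' is correct but is not a one-line consequence of the norm being lower semicontinuous: the paper's proof extracts, for each $Q$, a subsequence of the best-approximating polynomials $m_j^Q$ (bounded in the finite-dimensional space $\mathcal P_{k-1}^d$) converging to a limit $m^Q$, and then applies lower semicontinuity to $f_j-m_j^Q$; some such extraction is needed and should be spelled out.
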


The second result characterizes $\dot V_\kappa$ functions admitting $C^\infty$ approximation, i.e., functions of the subspace $\dot{\textsc{v}}_\kappa$.
\begin{Th}\label{teor2.13}
Let $f\in\dot V_\kappa$, where $\kappa:=\{\lambda, p, q\}$ satisfies one of the conditions 
\begin{equation}\label{e2.19} 
1\le p\le\infty,\quad 1\le q<\infty\quad {\rm and}\quad s(\kappa)<k,
\end{equation}
or
\begin{equation}\label{e2.19a}
1\le p\le\infty,\quad q=\infty,\quad \lambda\ge 0 \quad {\rm and}\quad s(\kappa)<k.
\end{equation} 
Then $f\in \dot{\textsc{v}}_\kappa$ if and only if 
\begin{equation}\label{e2.20}
\lim_{\varepsilon\rightarrow 0}\sup_{|\pi|\le\varepsilon}\left(\sum_{Q\in\pi}\left(|Q|^{-\lambda}E_{kq}(f;Q)\right)^p\right)^{\frac 1 p}=0;
\end{equation}
hereafter $|\pi|:=\sup_{Q\in\pi}|Q|$.
\end{Th}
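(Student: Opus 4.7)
The statement is a VMO-type characterization: $f$ admits $C^\infty$ approximation in the $\dot V_\kappa$-seminorm precisely when its packing seminorm concentrates on small cubes. Writing $A_\pi(h):=\bigl\{\sum_{Q\in\pi}(|Q|^{-\lambda}E_{kq}(h;Q))^p\bigr\}^{1/p}$ for the inner expression of \eqref{e2.20}, I would prove the two implications separately. For necessity, fix $\delta>0$ and pick $g\in C^\infty\cap\dot V_\kappa$ with $|f-g|_{V_\kappa}<\delta$, so that $A_\pi(f)\le A_\pi(g)+\delta$ by the triangle inequality. For smooth $g$, Taylor expansion about the center of each cube yields $E_{kq}(g;Q)\le C_d\|D^k g\|_\infty|Q|^{k/d+1/q}$, and the identity $s=d(\lambda+1/p-1/q)$ converts this into $|Q|^{-\lambda}E_{kq}(g;Q)\le C\|D^k g\|_\infty|Q|^{(k-s)/d+1/p}$. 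Summing over a packing $\pi$ with $|\pi|\le\varepsilon$ and using nonoverlappingness ($\sum_{Q\in\pi}|Q|\le 1$), one obtains $A_\pi(g)\le C(g)\,\varepsilon^{(k-s)/d}$, which vanishes as $\varepsilon\to 0$ since $s<k$. Sending $\delta\to 0$ then gives \eqref{e2.20}. (The $p=\infty$ case uses a $\sup$ in place of the sum; the $q=\infty$ case replaces $E_{kq}$ by $E_k$.)

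For the converse, assume \eqref{e2.20} and produce smooth approximations by mollification. Extend $f$ from $Q^d$ to $\RR^d$ (e.g.\ by iterated reflection across $\partial Q^d$) and set $f_n:=f*\phi_{1/n}$, where $\phi$ is a smooth compactly supported kernel whose moments of order $\le k-1$ vanish; this ensures that convolution with $\phi_\varepsilon$ fixes every $P\in\mathcal P_{k-1}^d$, and yields the decisive estimate
\[
E_{kq}(f_n;Q)\le E_{kq}(f;\widetilde Q),
\]
where $\widetilde Q$ is concentric with $Q$ of side length $|Q|^{1/d}+2/n$. (Proof: take $P$ as the best polynomial approximation of $f$ on $\widetilde Q$, observe that $f_n-P=\phi_{1/n}*(f-P)$ on $Q$, and apply Young's convolution inequality with $\|\phi_{1/n}\|_1=1$.) Given $\delta>0$, choose $\eta>0$ with $\sup_{|\pi|\le\eta}A_\pi(f)<\delta$. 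For any packing $\pi$, split it into cubes with $|Q|>\eta$ (at most $\eta^{-1}$ many, whose contribution to $A_\pi(f-f_n)$ tends to $0$ as $n\to\infty$ by $L_q$-convergence of $f_n\to f$ when $q<\infty$, or by a weak$^*$-convergence argument combined with the continuity of $f$ forced by \eqref{e2.20} when $q=\infty$) and cubes with $|Q|\le\eta$ (for which the triangle inequality plus the decisive estimate bounds $A_{\pi_s}(f_n)$ by the $A$-quantity on an enlarged family $\{\widetilde Q\}$ of bounded overlap, which decomposes into boundedly many packings each with mesh $\le 2^d\eta$, hence is $\lesssim \delta$).

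I expect the main obstacle to be the treatment of cubes with $|Q|^{1/d}<2/n$, below the mollification scale, where $|\widetilde Q|/|Q|$ is no longer bounded and the decisive estimate degenerates (since $|Q|^{-\lambda}$ may be much larger than $|\widetilde Q|^{-\lambda}$ when $\lambda\ge 0$). These "subscale" cubes need a separate treatment: one combines the crude bound $E_{kq}(f_n;Q)\le\|f\|_{L_q(\widetilde Q)}$ with the fact that subscale cubes in $\pi$ have aggregate measure $O(n^{-d})$, and couples the choice of $n$ to $\eta$ (e.g.\ $n\ge\eta^{-2/d}$) so that the subscale contribution, together with the large-cube $L_q$-error, is absorbed into $\delta$ before $\delta\to 0$. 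A secondary technical point is choosing the extension of $f$ across $\partial Q^d$ so that its $V_\kappa$-seminorm on an enlarged cube remains dominated by $|f|_{V_\kappa}$ and the vanishing property \eqref{e2.20} is preserved near the boundary.
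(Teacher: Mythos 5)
Your overall strategy matches the paper's: show that the vanishing condition \eqref{e2.20} defines a closed subspace $V_\kappa^0 \supset \textsc{v}_\kappa$ (the forward direction, by a Taylor bound on smooth functions, which you execute correctly), and for the converse produce a smoothing sequence $f_n\to f$ in the $V_\kappa$-seminorm by splitting packings into large and small cubes. Where you depart is in the choice of smoother, and this is exactly where the gap appears. You use a plain mollifier together with an extension of $f$ across $\partial Q^d$, so your ``decisive estimate'' compares $E_{kq}(f_n;Q)$ with $E_{kq}(f;\widetilde Q)$ on an \emph{additively} enlarged cube $\widetilde Q$ of side $\ell_Q + 2/n$. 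The paper instead uses a combined dilation-plus-mollification (formulas \eqref{eq5.7}--\eqref{eq5.9}): $f_n = f_n^0\circ a_n$ where $a_n$ is the contraction $x\mapsto\frac{n}{n+1}(x-c)+c$. Its key inequality \eqref{eq5.15} compares $E_{kq}(f_n;Q)$ to an average of $E_{kq}(f;Q_n(y))$ over cubes $Q_n(y)\subset Q^d$ that are \emph{multiplicatively shrunk} copies of $Q$, with $|Q_n(y)|=\lambda_n^d|Q|$. Because the shrinkage factor $\lambda_n$ is uniform over all cubes, the family $\{Q_n(y)\}_{Q\in\pi_1}$ is automatically a packing with the same (or smaller) mesh, no boundary extension is needed (the contraction pulls everything strictly inside $Q^d$), and the weight mismatch $|Q|^{-\lambda}$ vs.\ $|Q_n(y)|^{-\lambda}$ produces the bounded, cube-independent factor $\lambda_n^{d(\lambda-1/q)}\to 1$, cf.\ \eqref{e3.16a}.

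The concrete gap in your proposal is the treatment of subscale cubes ($\ell_Q< 2/n$), which you yourself flag. First, the claim that ``subscale cubes in $\pi$ have aggregate measure $O(n^{-d})$'' is false: a packing may contain $\sim n^d$ nonoverlapping cubes of side $\sim 1/n$, with aggregate measure close to $1$. Second, the crude bound $E_{kq}(f_n;Q)\le\|f\|_{L_q(\widetilde Q)}$ does nothing to tame the weight $|Q|^{-\lambda}$, which for $\lambda>0$ diverges faster than the additive enlargement ratio $(|\widetilde Q|/|Q|)^\lambda$ can be controlled --- this is precisely the degeneracy you describe, and neither coupling $n\sim\eta^{-2/d}$ nor a measure argument closes it. Third, the bounded-overlap decomposition of $\{\widetilde Q\}$ into finitely many packings also fails on the subscale range, since a single subscale $\widetilde Q$ can intersect arbitrarily many neighbors. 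In short, an additive enlargement is structurally incapable of preserving the packing geometry uniformly in cube size, which is exactly what the multiplicative dilation in the paper's smoother is designed to do. (The forward direction and the overall splitting $\pi=\pi_1\cup\pi_2$ with the $\varepsilon^{-(\lambda+1/p)d}\|f-f_n\|_q$ bound for large cubes are fine and agree with the paper; for $q=\infty$ your appeal to a continuity property is vague where the paper uses a Marchaud--Jackson argument, Lemma~\ref{lem5.3}, to get $L_\infty$-approximability from \eqref{e2.20} when $\lambda\ge 0$.)
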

\begin{R}
{\rm The restriction $s(\kappa)<k$ is necessary. In fact, for $s(\kappa)=s$ 
the subspace of functions of $\dot V_\kappa$ satisfying condition \eqref{e2.20} is (algebraically) isomorphic to $\mathcal P_{k-1}^d$, see the argument of the proof of Lemma \ref{lem3.1}, while $\dot{\textsc{v}}_\kappa$ contains all functions from $C^\infty\, (\subset L_q)$, see the proof of Theorem \ref{prop1.4}\,(c).
}
\end{R}
\subsection{Applications} We begin with the result describing duality properties of the ``classical'' spaces $\dot V_p^\lambda[0,1]$, $\lambda\ge 0$, $1\le p\le\infty$. It is a corollary of the above formulated Theorem \ref{teor2.11}.

In the following discussion, we use the classical definition of the $\dot V_p^\lambda[0,1]$ seminorm given for $f\in\ell_\infty[0,1]$ by
\begin{equation}\label{e2.21}
|f|_{V_p^\lambda}:=\sup_{\{x_i\}}\left(\sum_i\left(\frac{\delta_k(f;x_i,x_{i+1})}{|x_{i+1}-x_i|^\lambda}\right)^p\right)^{\frac 1 p},
\end{equation}
where $\{x_i\}$ runs over all monotone finite sequences in $[0,1]$ and
\[
\delta_k(f;a,b):=|\Delta_h^k(f;a)|,\quad {\rm where}\quad h:=\frac{b-a}{k}.
\]

Let us recall that
\[
V_p^\lambda:=\dot V_p^\lambda/\mathcal P_{k-1}^1\quad {\rm and}\quad \textsc{v}_p^\lambda:={\rm clos}(C^\infty/\mathcal P_{k-1}^1,V_p^\lambda).\smallskip
\]

\noindent {\bf A1. \underline{Duality Properties of $\dot V_p^\lambda[0,1]$}:}\medskip

 {\em Assume that
\begin{equation}\label{e2.22}
1<p\le\infty\quad {\rm and}\quad 0\le \lambda<k-\frac 1 p.
\end{equation}

Then }
\begin{equation}\label{e2.23}
(\textsc{v}_p^\lambda)^{**}\equiv V_p^\lambda\quad {\rm if}\quad \lambda>0\quad {\rm and}\quad (\textsc{v}_p^0)^{**}\cong V_p^0.\medskip
\end{equation}

Let us show that the restrictions on $\lambda, p$ in \eqref{e2.22} are necessary. In fact, the space $\dot V_p^\lambda[0,1]$ with $k=p=1$, $\lambda=0$, hence, $\lambda=k-\frac 1 p$, coincides with the Jordan space $BV[0,1]$. By the (modernized form of) Lebesgue theorem, see Subsection 1.1, 
\[
\dot W_1^1(0,1)\equiv AC[0,1].
\]
Since this isometry preserves $C^\infty$ functions and the subspace $C^\infty$ is dense in the Sobolev space, 
\[
\dot{\textsc{v}}_1^0:={\rm clos}(C^\infty, BV)=AC\equiv \dot W_1^1.
\]
Factorizing by constants we obtain 
\[
\textsc{v}_1^0\equiv W_1^1\cong \dot W_1^1/\RR\cong L_1;
\]
this, in turn, implies that
\[
(\textsc{v}_1^0)^{**}\cong L_\infty^*.
\]
Assuming that \eqref{e2.23} is true in this case we get
\[
BV\cong (\textsc{v}_1^0)^{**}\cong L_\infty^*.
\]
However, this is false as the cardinality of $BV$ is strictly less than that of $L_\infty^*$.

\begin{R}\label{rem2.15}
{\rm This also shows that the restriction on $\kappa$ in Theorem \ref{teo1.20} is necessary as well.
In fact, it will be proved in a forthcoming paper that for $\kappa=\{\lambda, p,\infty\}$ with $\lambda\ge 0$, $1\le p\le\infty$
\[
V_\kappa\hookrightarrow V_p^\lambda\quad {\rm and}\quad \textsc{v}_\kappa\equiv\textsc{v}_p^\lambda.
\]
If Theorem \ref{teo1.20} is valid for $k=d=p=1$, $\lambda=0$, hence, for $s(\kappa)=k$, then we have a contradiction
\[
L_\infty^*\cong (\textsc{v}_\kappa)^{**}\cong V_\kappa\hookrightarrow V_1^0=BV/\RR.
\]
}
\end{R}

We complete this discussion by referring to the papers \cite{DeL-61} and \cite{Ki-84}. The first one contains the two stars theorem \eqref{e2.23}
 for the space $Lip^\lambda$, $0<\lambda< 1$, that coincides with $V_\infty^\lambda[0,1]$ while the second one proves \eqref{e2.23} for the Wiener-L.\,Young space $BV_p[0,1]$, $1<p<\infty$, that coincides with $V_p^0[0,1]$.\smallskip
 
The subsequent applications present new results concerning approximation and duality properties of Morrey, John-Nirenberg, Sobolev and Lipschitz spaces. Their proofs directly follow from the formulated above results for $V_\kappa$ spaces via the corresponding isomorphisms between them and the spaces under consideration.\smallskip

Our first result uses the isomorphism for Morrey space $M_q^s$, see \eqref{equa1.18}, where we take $k=1$; hence, we have
\begin{equation}\label{e2.24}
M_q^s/\RR\cong V_\kappa,\quad s>0,
\end{equation}
where $\kappa:=\{\lambda, p, q\}$ and $k$ satisfy
\begin{equation}\label{e2.25}
k:=1,\quad \lambda:=\frac 1 q - \frac s d >0,\quad p=\infty,\quad 1\le q<\infty.\smallskip
\end{equation}
\noindent {\bf A2. \underline{Properties of Morrey Space}:}\medskip

\begin{itemize}
\item[(a)]
{\em  Let $f\in M_q^s$, where $s>0$ and $q$ satisfy \eqref{e2.25}.

\noindent There exists a sequence $\{f_n\}_{n\in\N}\subset C^\infty$ linearly depending on $f$   such that
\[
\lim_{n\rightarrow\infty} f_n=f\quad {\rm in}\quad L_q\quad {\rm and}\quad \lim_{n\rightarrow\infty}\|f_n\|_{M_q^s}\approx \|f\|_{M_q^s}
\]
with constants of equivalence depending on $d$ and $\lambda$ only.

\noindent Moreover, this sequence converges to $f$ in $M_q^s$ if and only if
\begin{equation}\label{e2.26}
\lim_{|Q|\rightarrow 0}|Q|^{\frac s d}\left(\frac{1}{|Q|}\int_Q|f-f_Q|^q\, dx\right)^{\frac 1 q}=0.
\end{equation}
\item[(b)] Let the parameters $k,\lambda, p$ of $\kappa$ satisfy condition \eqref{e2.25} while $1<q<\infty$. Then it is true that
\[
U_\kappa^*\cong M_q^s/\RR.
\]
Moreover, denoting by $m_q^s$ the space of functions $f\in L_q$ satisfying \eqref{e2.26} we have
\begin{equation}\label{e2.26a}
(m_q^s/\RR)^{*}\cong U_\kappa^*.
\end{equation}
In particular,}
\begin{equation}\label{e2.26b}
(m_q^s/\RR)^{**}\cong M_q^s/\RR.
\end{equation}
\end{itemize}
\begin{R}
{\rm Some other preduals to the space $M_q^s$ see in \cite{Zo-86} and \cite{AX-12}.}
\end{R}

To derive the next result we use isomorphism \eqref{equat1.18} with $k=1$. Hence, in this case we have
\[
BMO_p/\RR\cong V_\kappa,
\]
where $\kappa:=\{\lambda, p, q\}$ and $k$ satisfy
\begin{equation}\label{e2.27}
k:=1,\quad \lambda:=\frac 1 q -\frac 1 p,\quad 1\le q<p\le\infty.\medskip
\end{equation}

\noindent{\bf A3. \underline{Properties of $BMO$ Space}:}\medskip

\begin{itemize}
\item[(a)] {\em  Let $f\in BMO_p$ and $k,p,q$ satisfy \eqref{e2.27}.

\noindent There exists a sequence $\{f_n\}_{n\in\N}\subset C^\infty$ linearly depending on $f$   such that
\[
\lim_{n\rightarrow\infty} f_n=f\quad {\rm in}\quad L_q\quad {\rm and}\quad \lim_{n\rightarrow\infty}|f_n|_{BMO_p}\approx |f|_{BMO_p}
\]
with constants of equivalence depending on $d$ and $\lambda$ only.

\noindent Moreover, this sequence converges to $f$ in $BMO_p$ if and only if $f\in VMO_p$, i.e.,
\[
\lim_{\varepsilon\rightarrow\infty}\sup_{|\pi|\le\varepsilon}\left(\sum_{Q\in\pi}|Q|\left(\frac{1}{|Q|}\int_Q|f-f_Q|\,dx\right)^p\right)^{\frac 1 p}=0.
\]
\item[(b)] Let the parameters $k,\lambda, p$ of $\kappa$ satisfy condition \eqref{e2.27} but $1<q<\infty$. Then it is true that
\begin{equation}\label{e2.28}
U_\kappa^*\cong BMO_p/\RR
\end{equation}
and, moreover,
\begin{equation}\label{e2.29}
VMO_p^{*}\cong U_\kappa. 
\end{equation}
In particular,}
\begin{equation}\label{e2.30}
VMO_p^{**}\cong BMO_p.
\end{equation}
\end{itemize}
\begin{R}
{\rm 
(1) The special case $p=\infty$ of (b), i.e., the relations $U_{\kappa}^*\cong BMO/\RR$ and $VMO^{*}=U_\kappa$, $\kappa=\bigl\{\frac 1 q,\infty, q\bigr\}$, was proved in \cite{CW-77} in a more general setting of functions on homogeneous metric spaces.  The space $U_{\kappa}$ is denoted by $H^1$ in \cite{CW-77} to emphasize its connection with the atomic decomposition of the classical Hardy space $H^1(\RR^d)$ and the celebrated C. Fefferman duality theorem \cite{F-71} asserting that $BMO(\RR^d)=(H^1(\RR^d))^*$.

\noindent (2) Relation \eqref{e2.28} presents a family of preduals to the space $BMO_p$  which are pairwise isomorphic due to \eqref{e2.29}.}
\end{R}

The third result is derived from isomorphism \eqref{equa1.26}
\[
BV^k=\dot V_\kappa,
\]
where $\kappa=\{\lambda, p,q\}$ satisfies
\begin{equation}\label{e2.33}
 \lambda=\frac k d -\frac{1}{q'},\quad p=1,\quad 1\le q\le\infty,\end{equation}
 hence, $s(\kappa)=k$.\medskip

\noindent {\bf A4. \underline{Properties of $BV^k$ Space}:}\medskip

\begin{itemize}
\item[(a)] {\em Let $f\in BV^k$ and $\kappa$ satisfy \eqref{e2.33}.

\noindent There exists a sequence $\{f_n\}_{n\in\N}\subset C^\infty$ linearly depending on $f$ such that
\begin{equation}\label{e2.34}
\lim_{n\rightarrow\infty} f_n=f\quad {\rm in}\quad L_q\quad {\rm and}\quad \lim_{n\rightarrow\infty}|f_n|_{BV^k}\approx |f|_{BV^k}
\end{equation}
with constants of equivalence depending on $k, q$ and $\lambda$.

\noindent Moreover, the sequence converges to $f$ in $BV^k$ if and only if 
\begin{equation}\label{e2.35}
\lim_{\varepsilon\rightarrow\infty}\sup_{|\pi|\le\varepsilon}\left(\sum_{Q\in\pi}\frac{E_{kq}(f;Q)}{|Q|^\lambda}\right)=0.
\end{equation}
\item[(b)] If $\lambda,p\in\kappa$ satisfy \eqref{e2.33} and $1<q\le\infty$, then it is true that}
\begin{equation}\label{e2.36}
U_\kappa^*\cong BV^k/\mathcal P_{k-1}^d.
\end{equation}
\end{itemize}
\begin{R}
{\rm For $k=1$, i.e., for the space $BV(Q^d)$, statement \eqref{e2.34} is known, see, e.g, \cite[Thm.\,5.3.3]{Zi-87} while that of \eqref{e2.35} asserting that $BV(Q^d)$ has a predual space admitting an atomic decomposition is new. Duality of  $BV(Q^d)$ was mentioned without the proof in \cite[Remark 3.12]{AFP-00}.}
\end{R}

Finally, we use isomorphism \eqref{equa1.27} asserting that
\[
\dot V_\kappa=\dot B_\infty^{s}
\]
under the conditions on $k$ and $\kappa:=\{\lambda, p, q\}$ given by 
\begin{equation}\label{e2.37}
k=\min\{n\in\N\, :\, n>s\},\quad \lambda:=\frac s d+\frac 1 q,\quad p=\infty,\quad 1\le q\le \infty\quad {\rm and}\quad s>0.
\end{equation}
Since $\dot B_\infty^s$ consists of continuous up to Lebesgue measure zero functions, it is naturally identified with the space $\dot\Lambda^s(Q^d)$ of functions $f\in C(Q^d)$ satisfying the condition
\begin{equation}\label{e2.38}
\sup_{x,h}\bigl\{|\Delta_h^k(f;x)|\, :\, x, x+kh\in Q^d\bigr\}\le C\|h\|_\infty^s,
\end{equation}
where $k:=k(s)$.

\noindent This is equipped with a (Banach) seminorm given for $f$ satisfying \eqref{e2.38} by
\[
|f|_{\Lambda^s}:=\inf\, C.
\]

Further, replacing the right-hand side in \eqref{e2.38} by $o(\|h\|_\infty^s)$, $h\rightarrow 0$, we define the subspace of $\dot\Lambda^s$ denoted by $\dot\lambda^s$.

As usual, we also set
\[
\Lambda^s:=\dot\Lambda^s/\mathcal P_{k(s)-1}^d,\qquad  \lambda^s:=\dot\lambda^s/\mathcal P_{k-1}^d.
\]

\noindent {\bf A5. \underline{Properties of $\Lambda^s$ }:}\medskip

\begin{itemize}
\item[(a)] {\em Let $f\in \dot\Lambda^s$ and $k$ and  $\kappa$ satisfy \eqref{e2.37}.

\noindent There exists a sequence $\{f_n\}_{n\in\N}\subset C^\infty$ linearly depending on $f$ such that
\begin{equation}\label{e2.39}
\lim_{n\rightarrow\infty} f_n=f\quad {\rm in}\quad C\quad {\rm and}\quad \lim_{n\rightarrow\infty}|f_n|_{\Lambda^s}\approx |f|_{\Lambda^s}
\end{equation}
with constants of equivalence depending on $d$ and $s$ and tending to $\infty$ if $s\rightarrow k$.

\noindent Moreover, the sequence converges to $f$ in $\dot\Lambda^s$ if and only if it belongs to $\dot\lambda^s$.
\item[(b)] If $k$ and $\kappa$ satisfy \eqref{e2.37} but with $1<q<\infty$, then it is true that}
\begin{equation}\label{e2.40}
U_\kappa^*\cong \Lambda^s
\end{equation}
and, moreover,
\begin{equation}\label{e2.41}
(\lambda^s)^*\cong U_\kappa.
\end{equation}
In particular,
\begin{equation}\label{e2.42}
(\lambda^s)^{**}\cong\Lambda^s.
\end{equation}
\end{itemize}

As it was mentioned above, relation \eqref{e2.42} with $s\in (0,1)$ and $d=1$ was originally proved in \cite{DeL-61}. Similar to \eqref{e2.40}--\eqref{e2.42} statements for analogous spaces of functions on $\RR^d$ follow from the main results of \cite{Ha-97}.
\subsection{Comments}
(a) Using Definitions \ref{def1.1} and \ref{def2.1} with $\RR^d$ substituted for $Q^d$ and $f\in L_q(\RR^d)$ we define the family of spaces $V_\kappa(\RR^d)$ and $U_\kappa(\RR^d)$. 
The corresponding space $\textsc{v}_\kappa(\RR^d)$ is defined as ${\rm clos}(C_0^\infty(\RR^d),V_\kappa(\RR^d))$, where $C_0^\infty(\RR^d)$ consists of $C^\infty$ functions with compact supports.  Then under the restriction $\lambda(\kappa)>0$ all of the main results
of Subsection~2.2 hold with the very same proofs.

For $k(\kappa)=1$, the same is true for the family of spaces $\{V_\kappa\}$ of functions on complete doubling metric spaces, see, e.g., \cite[sec.\,4.3]{BB-11} for their definition and properties. In this cases, cubes are replaced by metric balls.\smallskip

\noindent (b) Let $\omega: [0,1]\rightarrow\RR_+$ be a continuous nondecreasing function satisfying the conditions
\begin{equation}
\lim_{\lambda\rightarrow 1^+}\left(\sup_{t\in (0,\lambda^{-1}]}\frac{\omega(\lambda t)}{\omega(t)}\right)=1\quad {\rm and}\quad \varlimsup_{t\rightarrow 0}\,\frac{t^{\sigma}}{\omega(t)}<\infty,
\end{equation}
where $\sigma:=\frac k d - \frac 1 p +\frac 1 q$.

Replacing in Definitions \ref{def1.1} and \ref{def2.1} $|Q|^\lambda$ by $\omega(|Q|)$ we introduce the spaces denoted by  $V_\kappa^\omega$ and $U_\kappa^\omega$, where $\kappa:=\{k,p,q\}$. The analogs of Theorems \ref{prop1.4}, \ref{te1.11} and \ref{teo2.12} hold for these spaces with minor changes in the proofs.

The same is true for  Theorems \ref{teo1.20}, \ref{teor2.13} and Corollary \ref{cor2.8} under one more condition on $\omega$:
\begin{equation}
\lim_{t\rightarrow 0}\, \frac{t^{\sigma}}{\omega(t)}=0.
\end{equation}

\sect{Proofs of Approximation Theorems}
\subsection{$BV$ spaces of large smoothness} The next result shows that $\dot V_\kappa=\mathcal P_{k-1}^d$ if $s(\kappa)>k$. In particular,  Theorem \ref{teo2.12} trivially holds in this case.
\begin{Lm}\label{lem3.1}
Let $f\in\dot V_\kappa$, where $\kappa:=\{\lambda, p,q\}$ and
$s:=s(\kappa)>k$. Then $f$ equals a  polynomial of degree $\le k-1$ a.e. on $Q^d$.
\end{Lm}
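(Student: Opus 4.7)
The strategy is to establish that the $L_q$-modulus of smoothness of order $k$ satisfies $\omega_{k,q}(f;t)=o(t^k)$ as $t\to 0^+$; a classical distributional argument then forces $D^\alpha f=0$ for every multi-index $|\alpha|=k$, and so $f$ coincides almost everywhere on $Q^d$ with a single polynomial in $\mathcal P_{k-1}^d$.

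For $N\in\N$ partition $Q^d$ into $N^d$ congruent subcubes $Q_j$ of side $1/N$ and apply Definition~\ref{def1.1} to this packing to get $\sum_j E_{kq}(f;Q_j)^p\le N^{-d\lambda p}|f|_{V_\kappa}^{\,p}$. For any $h\in\RR^d$ with $|h|_\infty<1/(kN)$, every $x\in (Q_j)_{kh}$ satisfies $x+ih\in Q_j$ for $i=0,\ldots,k$ (by convexity of $Q_j$), so, using that $\Delta_h^k$ annihilates $\mathcal P_{k-1}^d$,
\[
\|\Delta_h^k f\|_{L_q((Q_j)_{kh})}=\|\Delta_h^k(f-m_{Q_j})\|_{L_q((Q_j)_{kh})}\le 2^k E_{kq}(f;Q_j),
\]
where $m_{Q_j}\in\mathcal P_{k-1}^d$ achieves the infimum in $E_{kq}(f;Q_j)$. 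Raising to the $q$-th power, summing, and covering the boundary layer $Q^d_{kh}\setminus\bigsqcup_j (Q_j)_{kh}$ by averaging over shifted grids, one arrives at
\[
\|\Delta_h^k f\|_{L_q(Q^d_{kh})}\le C\, N^{-s}|f|_{V_\kappa},\qquad s:=s(\kappa)=d\bigl(\lambda+\tfrac1p-\tfrac1q\bigr).
\]
Choosing $N\asymp 1/(k|h|_\infty)$ gives $\omega_{k,q}(f;t)\le C't^s|f|_{V_\kappa}$, so $\omega_{k,q}(f;t)=o(t^k)$ since $s>k$. For any $\phi\in C^\infty$ with compact support in the interior of $Q^d$ and $\alpha=ke_i$, the symmetry identity $\int f\,\Delta_h^k\phi\,dx=\int\Delta_{-h}^k f\cdot\phi\,dx$ and the bound $|\int\Delta_{-he_i}^k f\cdot\phi|\le\omega_{k,q}(f;|h|)\|\phi\|_{L_{q'}}=o(|h|^k)$ yield $\langle\partial_i^k f,\phi\rangle=(-1)^k\lim_{h\to 0}h^{-k}\int\Delta_{-he_i}^k f\cdot\phi\,dx=0$. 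Polarization extends this to $D^\alpha f=0$ distributionally for every $|\alpha|=k$, so $f$ equals a polynomial of degree $\le k-1$ almost everywhere.

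The main obstacle is the arithmetic step producing the rate $N^{-s}$ in the display above. For $p\ge q$ it follows from a short H\"older computation,
\[
\sum_j E_{kq}(f;Q_j)^q\le N^{d(1-q/p)}\Bigl(\sum_j E_{kq}(f;Q_j)^p\Bigr)^{q/p}\le N^{-qs}|f|_{V_\kappa}^{\,q},
\]
using the identity $d(1-q/p)-d\lambda q=-qs$. For $p<q$ the direct embedding $\ell^p\subset\ell^q$ only yields $N^{-d\lambda q}$, which may be weaker than $N^{-qs}$ in the regime $d\lambda\le k$ (still compatible with $s>k$). There one refines by replacing the subcube-wise bound on $\|\Delta_h^k f\|_{L_q}$ with a bound derived from packings of groups of $k+1$ adjacent subcubes, combined with a variance-type lower bound that controls $E_{kq}$ on the enlarged group in terms of the ``jumps'' between best polynomial approximants on neighbouring subcubes; this recovers the correct exponent and concludes the proof.
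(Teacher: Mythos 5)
Your overall strategy is the same as the paper's: discretize $Q^d$ into congruent cubes of sidelength $t$, use the $V_\kappa$-seminorm to bound a suitable sum of local approximations, lower-bound that sum by a $k$-th modulus of continuity, obtain $\omega_k(f;t)=O(t^s)=o(t^k)$, and conclude that $f$ is a polynomial of degree $\le k-1$. But there is a genuine gap that you yourself flag: when $p<q$, the H\"older step $\bigl(\sum_j E_{kq}(f;Q_j)^q\bigr)^{1/q}\le N^{-s}|f|_{V_\kappa}$ fails, and the embedding $\ell^p\subset\ell^q$ only yields the weaker bound $N^{-d\lambda}$, which is not $o(N^{-k})$ in general. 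Your proposed remedy (packing groups of $k+1$ adjacent subcubes together with a ``variance-type'' lower bound controlling jumps of best approximants on neighbouring cubes) is not carried out, and it is not clear it can be made to work; as written it is a sketch of an idea, not a proof.

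The paper closes this gap in a much simpler way: set $r:=\min\{p,q\}$ and pass from $E_{kq}$ to $E_{kr}$ on each cube via the cube-level H\"older estimate
\[
E_{kr}(f;Q)\le|Q|^{\frac1r-\frac1q}E_{kq}(f;Q).
\]
Plugging this into the $V_\kappa$-sum and applying H\"older with exponents $p/r$, $p/(p-r)$ (the second factor produces $\bigl(\sum_{Q\in\pi}|Q|\bigr)^{1-r/p}\le1$) gives, for every packing,
\[
\Bigl\{\sum_{Q\in\pi}\bigl(|Q|^{-s/d}E_{kr}(f;Q)\bigr)^r\Bigr\}^{1/r}\le|f|_{V_\kappa}.
\]
Restricting to packings $\Pi(t)$ of congruent cubes of sidelength $t$ then yields
\[
t^{-s}\sup_{\pi\in\Pi(t)}\Bigl\{\sum_{Q\in\pi}E_{kr}(f;Q)^r\Bigr\}^{1/r}\le|f|_{V_\kappa},
\]
and a cited result bounds this supremum from below by $c\,\omega_{kr}(f;t)$, giving $\omega_{kr}(f;t)=O(t^s)=o(t^k)$. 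In other words, when $p<q$ the power of $|Q|$ you lose by dropping from $L_q$ to $L_p$ on a single cube exactly cancels the deficit in your exponent count, so no combinatorial refinement over grouped subcubes is needed. Note also that when $p\ge q$ one has $r=q$ and the paper's argument reduces to your $p\ge q$ computation, so the $r:=\min\{p,q\}$ device handles both regimes uniformly. The tail of your argument (from $\omega_{k}(f;t)=o(t^k)$ to $D^\alpha f=0$ distributionally, and then to $f\in\mathcal P_{k-1}^d$ a.e.) differs cosmetically from the paper's appeal to the Marchaud inequality and a Jackson/Whitney-type theorem, but both are standard and correct once the modulus bound is in hand; the polarization step to reach mixed $|\alpha|=k$ should be spelled out. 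The substantive issue is only the $p<q$ case, where you would need to replace your unfinished refinement with the $L_r$ reduction.
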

\begin{proof}
Setting $r:=\min\{p,q\}$ and using the H\"{o}lder inequality we have
\[
E_{kr}(f;Q)\le |Q|^{\frac 1 r -\frac 1 q}E_{kq}(f;Q).
\]
Since $\frac s d:=\lambda+\frac 1 p -\frac 1 q$, this and the H\"{o}lder inequality imply
\[
\begin{array}{l}
\displaystyle \left\{\sum_{Q\in\pi}\left(|Q|^{-\frac s d}E_{kr}(f;Q)\right)^r\right\}^{\frac 1 r}\le \left\{\sum_{Q\in\pi}\left(|Q|^{-\lambda}E_{kq}(f;Q)\right)^r |Q|^{1-\frac r p}\right\}^{\frac 1 r} \\
\\
\displaystyle \le \left\{\sum_{Q\in\pi}\left(|Q|^{-\lambda} E_{kq}(f;Q)\right)^p\right\}^{\frac 1 p}\left\{\sum_{Q\in\pi}|Q|\right\}^{\frac 1 r -\frac 1 p}\le |f|_{V_\kappa}.
\end{array}
\]

Now, let $\Pi(t)\subset\Pi$ consist of packings formed by congruent cubes of sidelength $t\le 1$. Taking in the previous inequality supremum over all $\pi\in\Pi(t)$ we obtain
\begin{equation}\label{e3.1a}
t^{-s}\sup_{\pi\in\Pi(t)}\left\{\sum_{Q\in\pi}\bigl(E_{kr}(f;Q)\bigr)^r\right\}^{\frac 1 r}\le |f|_{V_\kappa}.
\end{equation}
Due to Theorem 4 of \cite[\S\,2]{Br-71} supremum here is bounded from below by $c\omega_{kr}(f;t)$ with $c=c(k,d)>0$. This and \eqref{e3.1a} imply that
\[
\lim_{t\rightarrow\infty}t^{-k}\omega_{kr}(f;t)\le c|f|_{V_\kappa}\lim_{t\rightarrow 0}t^{s-k}=0.
\]
In turn, this gives
\[
\omega_{kr}(f;\cdot)=0,
\]
see, e.g., \cite[sec.\,\textrm{III}.3.3]{Ti-63}.

However, Theorem 2 of \cite{Br-70} asserts that
\[
\begin{array}{l}
E_{kr}(f;Q^d)\le c(k,d)\,\omega_{kr}\left(f; \frac 1 k\right),
\end{array}
\]
i.e., $E_{kr}(f;Q^d)=0$ in this case and $f\in\mathcal P_{k-1}^d$ up to Lebesgue measure zero.
\end{proof}

In the subsequent proofs we assume that $V_\kappa$ spaces are of smoothness $\le k$.

\subsection{Proof of Theorem \ref{teo2.12} (a)} 
First we define a sequence of linear operators $\{T_n\}_{n\in\N}$ acting from $L_1$ to $C^\infty$.

Let $\varphi\in C^\infty(\RR^d)$ be a function supported by the $\ell_\infty^d$ unit cube $[-1,1]^d$ such that
\[
0\le\varphi\le 1\quad {\rm and}\quad \int_{\RR^d}\varphi\,dx=1.
\]
We denote by $f^0$ the extension of $f\in L_1$ by $0$ outside $Q^d$ and then define $f_n^0$, $n\in\mathbb N$, by the formula
\begin{equation}\label{eq5.7}
f_n^0(x):=\int_{\|y\|_\infty\le 1}f^0\bigl(x-\mbox{$\frac{y}{n+1}$}\bigr)\,\varphi(y)\, dy,\quad x\in\RR^d;
\end{equation}
hereafter $\|y\|_\infty:=\max_{1\le i\le d}|y_i|$ is the $\ell_\infty^d$ norm.

Next, let $a_n:\RR^d\rightarrow\RR^d$ be the $\lambda_n$-dilation with center $c$, where $\lambda_n:=\frac{n}{n+1}$ and $c=c_{Q^d}$ is the center of $Q^d$.

Then $a_n(Q^d)$ is a subcube of $Q^d$ centered at $c_{Q^d}$ and 
\begin{equation}\label{eq5.8}
{\rm dist}_{\ell_\infty^d} (\partial Q^d, a_n(Q^d))=\frac{1}{n+1}.
\end{equation}

Now we define $f_n:=T_n(f)\in C^\infty$ by setting 
\begin{equation}\label{eq5.9}
f_n=f_n^0\circ a_n.
\end{equation}

Let $\kappa:=\{\lambda, p,q\}$ be such that
\begin{equation}\label{e3.4}
1\le p,q\le\infty\quad {\rm and}\quad s(\kappa)\le k.
\end{equation}
We show that under these assumptions the sequence $\{f_n\}_{n\in\N}$, $f\in \dot V_\kappa$, satisfies the inequality
\begin{equation}\label{e3.5}
\varlimsup_{n\rightarrow\infty}|f_n|_{V_\kappa}\le |f|_{V_\kappa}.
\end{equation}

To prove \eqref{e3.5} we need the following duality result.

\begin{Lm}\label{le3.2}
Let $f\in L_r(Q)$, $1\le r< \infty$. There is a function $g_Q(f)\in L_{r'}(Q)$ such that
\begin{equation}\label{e3.6}
E_{kr}(f;Q)=\int_Q fg_Q(f)\, dx;
\end{equation}
\begin{equation}\label{e3.7}
\|g_Q(f)\|_{L_{r'}(Q)}=1\quad {\rm and}\quad
\int_{Q} x^\alpha g_Q(f)\, dx=0,\quad \quad |\alpha|\le k-1.
\end{equation}
\end{Lm}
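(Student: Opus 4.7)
The plan is to recognize $E_{kr}(f;Q)$ as the distance in the Banach space $L_r(Q)$ from the element $f$ to the finite-dimensional subspace $M:=\mathcal P_{k-1}^d|_Q$, and then invoke the standard Hahn--Banach duality formula for distance to a closed subspace. Since $M$ is finite-dimensional, it is automatically norm-closed in $L_r(Q)$, so the quotient $L_r(Q)/M$ is itself a Banach space, and the quotient norm of $[f]$ equals $E_{kr}(f;Q)$.

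The main step is as follows. By Hahn--Banach, applied to the one-dimensional subspace $\mathbb R\cdot [f]\subset L_r(Q)/M$ and then lifted to $L_r(Q)$, there exists a continuous linear functional $\phi_0$ on $L_r(Q)$ of norm one with $\phi_0|_M=0$ and $\phi_0(f)=E_{kr}(f;Q)$; note that the supremum in the duality formula is \emph{attained} (not merely approached), which is the key point and holds uniformly for all $1\le r<\infty$, including the nonreflexive case $r=1$. Because $r<\infty$, the Riesz representation theorem identifies $\phi_0$ isometrically with a unique $g_Q(f)\in L_{r'}(Q)$ via
\[
\phi_0(h)=\int_Q h\,g_Q(f)\,dx,\qquad h\in L_r(Q),
\]
and $\|g_Q(f)\|_{L_{r'}(Q)}=\|\phi_0\|=1$. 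The condition $\phi_0|_M=0$ translates directly into the moment conditions $\int_Q x^\alpha g_Q(f)\,dx=0$ for $|\alpha|\le k-1$, while the value $\phi_0(f)=E_{kr}(f;Q)$ gives the representation formula. If the sign produced by Hahn--Banach happens to be negative, I simply replace $g_Q(f)$ by $-g_Q(f)$, which preserves the norm and the moment conditions.

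The only edge case is $E_{kr}(f;Q)=0$, i.e.\ $f\in M$ a.e.; here the representation formula is trivial and I choose any $g\in L_{r'}(Q)$ with $\|g\|_{r'}=1$ and vanishing polynomial moments of order $\le k-1$. Such $g$ exists because the moment conditions impose only finitely many (namely $\dim\mathcal P_{k-1}^d$) linear constraints on the infinite-dimensional unit sphere of $L_{r'}(Q)$.

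I do not expect any serious obstacle here: the result is a textbook application of Hahn--Banach duality to the distance functional, and the only point to watch is the attainment of the supremum in the dual ball, which is handled cleanly by the extension theorem rather than by any weak$^*$ compactness argument (that would fail for $r=1$).
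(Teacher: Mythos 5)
Your proof is correct and takes essentially the same route as the paper: apply the Hahn--Banach duality formula for the distance from $f$ to the closed subspace $\mathcal P_{k-1}^d|_Q$ to produce a norm-one functional vanishing on the polynomials and attaining $E_{kr}(f;Q)$, then use $L_r(Q)^*\equiv L_{r'}(Q)$ to realize it as $g_Q(f)$. The paper simply cites the Dunford--Schwartz lemma for the existence of this functional, while you spell out the quotient-space argument and the degenerate case $E_{kr}(f;Q)=0$; the content is the same.
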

\begin{proof}
Since the best approximation in \eqref{e3.6} is the distance in $L_q(Q)$ from $f$ to $\mathcal P_{k-1}^d|_Q$, the Hahn-Banach theorem implies, see, e.g.,  \cite[Lemma II.3.12]{DSch-58}, existence of a linear functional $\ell\in L_r(Q)^*$ of norm $1$ orthogonal to $\mathcal P_{k-1}^d|_Q$ and such that $\ell(f)=E_{kr}(f;Q)$.
Identifying $\ell$ with the function $g_q(f)\in L_{r'}(Q)\, (=L_r(Q)^*)$ and representing $\ell(f)$ as the integral we obtain the result.
\end{proof}

Using this we first prove  the following inequality for $1\le q<\infty$ that will be then extended to $q=\infty$.
\begin{equation}\label{eq5.15}
E_{kq}(f_n;Q)\le\lambda_n^{-\frac d q}\int_{\|y\|_\infty \le 1}E_{kq}(f^0;Q_n(y))\,\varphi(y)\, dy,
\end{equation}
where the cube $Q_n(y):=a_n(Q)-(n+1)^{-1}y$ is containing in $Q^d$ by \eqref{eq5.8}.

For its proof we use Lemma \ref{le3.2} to write
\begin{equation}\label{eq5.16}
E_{kq}(f_n;Q)=\int_Q f_n g_Q\, dx,
\end{equation}
where $g_Q\in L_{q'}$, $1<q'\le \infty$, is such that
\begin{equation}\label{equat5.17}
\|g_Q\|_{L_{q'}(Q)}=1\quad {\rm and}\quad \int_Q g_Q(x) x^\alpha=0,\quad |\alpha|\le k-1.
\end{equation}
Using the definition of $f_n$, see \eqref{eq5.9}, changing the order of integration in the inequality  \eqref{eq5.16} and using the orthogonality of $g_Q$ to $\mathcal P_{k-1}^d$ we have for any $m\in\mathcal P_{k-1}^d$
\[E_{kq}(f_n;Q)\le \int_{\|y\|_\infty\le 1}\varphi(y)dy\int_Q(f^0-m)(a_n(x)-(n+1)^{-1}y)g_Q(x)\, dx.
\]
Changing variable $x$ in the inner integral by $z:=a_n(x)-(n+1)^{-1}y$, hence, changing $Q$ by $Q_n(y)$ and $dx$ by $\lambda_n^{-d}\, dz$ and then estimating the obtained integral by the H\"{o}lder inequality we have
\begin{equation}\label{eq5.17}
E_{kq}(f_n;Q)\le \lambda_n^{-\frac d q}\int_{\|y\|_\infty\le 1}\varphi(y)\, dy\left(\int_{Q_n(y)}|f^0-m|^q\, dx\right)^{\frac 1 q}.
\end{equation}
Taking here infimum over $m\in\mathcal P_{k-1}^d$ and noting that $f^0=f$ on $Q_n(y)\subset Q^d$  we prove \eqref{eq5.15} for $1\le q<\infty$.

Let us extend the just proved inequality to $q=\infty$. To this end we need the following:
\begin{Lm}\label{le3.3}
If $g\in L_\infty$ and $Q\subset Q^d$, then
\begin{equation}\label{e3.12}
\lim_{q\rightarrow\infty}E_{kq}(g;Q)=E_{k\infty}(g;Q).
\end{equation}
\end{Lm}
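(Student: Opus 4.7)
The plan is to prove the equality by establishing the two matching inequalities $\varlimsup_{q\to\infty} E_{kq}(g;Q)\le E_{k\infty}(g;Q)$ and $\varliminf_{q\to\infty} E_{kq}(g;Q)\ge E_{k\infty}(g;Q)$ separately.

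For the upper bound I would fix $\varepsilon>0$ and choose $m_\infty\in\mathcal P_{k-1}^d$ with $\|g-m_\infty\|_{L_\infty(Q)}\le E_{k\infty}(g;Q)+\varepsilon$. Since $g-m_\infty\in L_\infty(Q)$ and $|Q|<\infty$, the elementary fact $\lim_{q\to\infty}\|h\|_{L_q(Q)}=\|h\|_{L_\infty(Q)}$ (which follows at once from H\"older's inequality $\|h\|_{L_q(Q)}\le |Q|^{1/q}\|h\|_{L_\infty(Q)}$ together with the trivial lower estimate on any sublevel set) gives
\[
\varlimsup_{q\to\infty} E_{kq}(g;Q)\le \lim_{q\to\infty}\|g-m_\infty\|_{L_q(Q)}=\|g-m_\infty\|_{L_\infty(Q)}\le E_{k\infty}(g;Q)+\varepsilon,
\]
and $\varepsilon$ is arbitrary.

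For the lower bound I would pick, for each $q\ge 1$, a polynomial $m_q\in\mathcal P_{k-1}^d$ that realizes (or nearly realizes) $E_{kq}(g;Q)$. The crucial ingredient is a Nikolskii-type inequality on the finite-dimensional space $\mathcal P_{k-1}^d$: by equivalence of norms on this space, restricted to the unit cube and rescaled, one has
\[
\|m\|_{L_\infty(Q)}\le C(k,d)\,|Q|^{-1/q}\|m\|_{L_q(Q)}\qquad (m\in\mathcal P_{k-1}^d,\ q\ge 1),
\]
with $C(k,d)$ independent of $q$ and $Q$. Combined with the trivial bound $\|m_q\|_{L_q(Q)}\le \|g\|_{L_q(Q)}+\|g-m_q\|_{L_q(Q)}\le |Q|^{1/q}\|g\|_\infty+ |Q|^{1/q}E_{k\infty}(g;Q)$ (using the already-proved upper bound), this shows that $\{m_q\}_{q\ge 1}$ is uniformly bounded on $Q$. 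Since $\mathcal P_{k-1}^d$ is finite-dimensional, any sequence $q_n\to\infty$ realizing $\varliminf_q E_{kq}(g;Q)$ admits a subsequence along which $m_{q_n}\to m_\ast$ uniformly on $Q$ for some $m_\ast\in\mathcal P_{k-1}^d$.

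Finally I would use the standard H\"older monotonicity $|Q|^{-1/r}\|h\|_{L_r(Q)}\le |Q|^{-1/q_n}\|h\|_{L_{q_n}(Q)}$ for any fixed $r\le q_n$ with $h:=g-m_{q_n}$, and pass to the limit in $n$ (using uniform convergence $m_{q_n}\to m_\ast$) to obtain
\[
\|g-m_\ast\|_{L_r(Q)}\le |Q|^{1/r}\,\varliminf_{q\to\infty}E_{kq}(g;Q).
\]
Letting $r\to\infty$ gives $\|g-m_\ast\|_{L_\infty(Q)}\le\varliminf_{q\to\infty}E_{kq}(g;Q)$, and since $E_{k\infty}(g;Q)\le\|g-m_\ast\|_{L_\infty(Q)}$ the lower bound follows.

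The main obstacle is the uniform control of the near-minimizers $m_q$; this is exactly what the $q$-uniform Nikolskii inequality on the fixed finite-dimensional space $\mathcal P_{k-1}^d|_Q$ provides, and once it is available the rest is a routine compactness and H\"older argument.
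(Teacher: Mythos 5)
Your proposal is correct and follows essentially the same route as the paper's proof: both establish the upper bound by testing with an $L_\infty$-optimal polynomial (using $|Q|\le 1$), and both establish the lower bound by extracting a uniformly convergent subsequence of near-minimizers $m_q$ via finite-dimensionality of $\mathcal P_{k-1}^d$. The only cosmetic difference is in the closing step of the lower bound, where the paper passes directly to $\lim_j\|g-m_{q_j}\|_{L_{q_j}(Q)}=\|g-\tilde m_\infty\|_{L_\infty(Q)}$, whereas you compare with a fixed $L_r$ norm, let $n\to\infty$, and then let $r\to\infty$; both are routine variants of the same computation.
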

\begin{proof}
Let $m\in\mathcal P_{k-1}^d$ be such that
\[
\|g-m\|_{L_\infty(Q)}=E_{k\infty}(g;Q).
\]
Then for each $q\in [1,\infty)$
\[
E_{kq}(g;Q)\le \|g-m\|_{L_{q}(Q)}\le \|g-m\|_{L_\infty(Q)}=E_{k\infty}(g;Q).
\]
This shows that 
\begin{equation}\label{upper}
\varlimsup_{q\rightarrow\infty}E_{kq}(g;Q)\le E_{kq}(g;Q).
\end{equation}

Conversely, let $\{q_i\}_{i\in\N}\subset [1,\infty)$  and $m_{q_i}\in\mathcal P_{k-1}^d$ be such that
\[
\begin{array}{c}
\displaystyle
\varliminf_{q\rightarrow\infty}E_{kq}(g;Q)=\lim_{i\rightarrow\infty}E_{kq_i}(g;Q)\quad {\rm and}\medskip
\\
\displaystyle
\|g-m_{q_i}\|_{L_{q_i}(Q)}=E_{kq_i}(g;Q).
\end{array}
\]
Since ${\rm dim}\, \mathcal P_{k-1}^d<\infty$, for some $c=c(k,d,Q)>0$
\[
\|m_{q_i}\|_{L_\infty(Q)}\le c\|m_{q_i}\|_{L_{q_i}(Q)}\le 2c\|g\|_{L_{q_i}(Q)}.
\]
Hence, there are a subsequence $\{m_{q_{j}}\}_{j\in J\subset\N}$ of $\{m_{q_i}\}_{i\in\N}$ and a polynomial $\tilde m_\infty\in\mathcal P_{k-1}^d$ such that $\displaystyle \lim_{J\ni j\rightarrow\infty}m_{q_{j}}= \tilde m_{\infty}$  uniformly on $Q$. 

\noindent Therefore
\[
\lim_{J \ni j\rightarrow \infty}\|g-m_{q_{j}}\|_{L_{q_{j}}(Q)}=\|g-\tilde m_\infty\|_{L_\infty(Q)}
\]
and we have
\begin{equation}\label{lower}
\varliminf_{q\rightarrow\infty} E_{kq}(g;Q)=\lim_{J\ni j\rightarrow \infty}E_{kq_{j}}(g;Q)=
\|g-\tilde m_\infty\|_{L_\infty(Q)}\ge E_{k\infty}(g;Q).
\end{equation}

Inequalities \eqref{upper} and \eqref{lower} imply the required statement.
\end{proof}

Now if $f\in L_\infty$, then for each $q\in [1,\infty)$, $y\in Q^d$
\[
E_{kq}(f^0;Q_n(y))\le \|f\|_{L_\infty}
\]
so that using Lemma \ref{le3.3} and the Lebesgue dominated convergence theorem we derive from \eqref{eq5.15} letting $q\rightarrow\infty$
\[
\begin{array}{l}
\displaystyle
E_{k\infty}(f_n;Q)=\lim_{q\rightarrow\infty}E_{kq}(f_n;Q)\le\lim_{q\rightarrow\infty}\lambda_n^{-\frac d q}\int_{\|y\|_\infty \le 1}E_{kq}(f^0;Q_n(y))\,\varphi(y)\, dy\\
\\
\displaystyle \int_{\|y\|_\infty \le 1}\lim_{q\rightarrow\infty}E_{kq}(f^0;Q_n(y))\,\varphi(y)\, dy=\int_{\|y\|_\infty \le 1}E_{k\infty}(f^0;Q_n(y))\,\varphi(y)\, dy.
\end{array}
\]

This proves inequality \eqref{eq5.15} for $q=\infty$.\smallskip

Using  \eqref{eq5.15} let us prove inequality \eqref{e3.5}.

Let $\pi\in\Pi$ and $1\le p,q\le\infty$. Applying \eqref{eq5.15} and noting that
\[
|Q_n(y)|=\lambda_n^d |Q|,\quad Q\in\pi,
\]
we obtain
\begin{equation}\label{e3.16a}
\begin{array}{l}
\displaystyle
\gamma(\pi;f_n):= \left(\sum_{Q\in\pi}(|Q|^{-\lambda}E_{kq}(f_n;Q))^p\right)^{\frac 1 p}\\
\\
\displaystyle
\qquad\qquad\le \lambda_n^{d(\lambda-\frac1 q )}\left
(\sum_{Q\in\pi}\left(|Q_n(y)|^{-\lambda}\int_{\|y\|_\infty\le 1}E_{kq}(f_{n};Q_n(y))\varphi(y)dy\right)^p\right)^{\frac 1 p}.
\end{array}
\end{equation}
Applying to the right-hand side the Minkowski inequality and noting that $\pi(y):=\{Q_n(y)\}_{Q\in\pi}$ is also a packing we obtain
\[
\begin{array}{l}
\displaystyle
\gamma(\pi;f_n)\le \lambda_n^{d(\lambda -\frac 1 q) }\int_{\|y\|_\infty \le 1}\gamma(\pi(y);f)\varphi(y)dy\\
\\
\displaystyle \qquad\qquad\le \lambda_n^{d(\lambda -\frac 1 q) }
\sup_{\pi}\gamma(\pi;f)\int_{\|y\|\le 1}\varphi\, dy=:\lambda_n^{d(\lambda -\frac 1 q) } |f|_{V_\kappa}.
\end{array}
\]

Thus, we have from here
\[
|f_n|_{V_\kappa}:=\sup_{\pi}\gamma(\pi;f_n)\le\lambda_n^{d(\lambda-\frac 1 q)}|f|_{V_\kappa}.
\]
Passing to $n\rightarrow\infty$ and noting that $\lambda_n:=\frac{n}{n+1}\rightarrow 1$ we prove inequality \eqref{e3.5}.\smallskip

To complete the proof of assertion (a) it remains to show that conversely
\begin{equation}\label{e3.15}
|f|_{V_\kappa}\le\varliminf_{n\rightarrow\infty}|f_n|_{V_\kappa}.
\end{equation}
This will be done using assertion (b) of the theorem that is proved now.

\subsection{Proof of Theorem \ref{teo2.12}\,(b)}

First let $q<\infty$. Then given $\varepsilon>0$ for every $f\in\dot V_\kappa\subset L_q$ there is a $C^\infty$ function $g$ supported by the open cube $(0,1)^d$ such that
\begin{equation}\label{eq5.21}
\|f-v\|_q<\varepsilon
\end{equation}
Further, as in \eqref{eq5.7} we define the regularizer $v_n$, $n\in\N$, by setting
\[
v_n(x):=\int_{\|y\|_\infty\le 1}v\bigl(a_n(x)-\mbox{$\frac{y}{n+1}$}\bigr)\,\varphi(y)\, dy,\quad x\in Q^d.
\]
Then we estimate $f-f_n$ by 
\begin{equation}\label{eq5.22}
\|f-f_n\|_q\le\|f-v\|_q+\|v-v_n\|_q+\|f_n-v_n\|_q.
\end{equation}
To estimate the second term we write
\[
(v-v_n)(x)=\int_{\|y\|_\infty\le 1}\left(v(x)-v(a_n(x)-(n+1)^{-1}y)\right)\,\varphi(y)\, dy.
\]
By the mean value theorem the absolute value of the integral is bounded
from above by
 \[
 c(d)|v|_{C^1(Q^d)}\max\{\|x-a_n(x)-(n+1)^{-1}y\|_\infty\, :\, x\in Q^d,\ \|y\|_\infty\le 1\}.
 \]
 Since $a_n(x):=\frac{n}{n+1}(x-c)+c$, where $c:=c_{Q^d}$, the above maximum is bounded by \penalty-10000 $\frac{1}{2(n+1)}+\frac{1}{n+1} $. Thus, $\|v-v_n\|_q\to 0$ as $n\to\infty$ and 
 \begin{equation}\label{eq5.22a}
 \varlimsup_{n\to\infty}\|f-f_n\|_q\le\varepsilon +\varlimsup_{n\to\infty}\|f_n-v_n\|_q.
 \end{equation}
 Finally, by the Minkowski inequality following by the change of variables, cf. \eqref{eq5.17}, we have
 \[
  \begin{array}{l}
 \displaystyle
 \|f_n-v_n\|_q:=\left\{\int_{Q^d}dx\left|\int_{\|y\|_\infty\le 1}(f-v)(a_n(x)-(n+1)^{-1}y)\,\varphi(y)\, dy\right|^q\,  \right\}^{\frac 1 q}\\
 \\
 \displaystyle\qquad\qquad\quad\le\lambda_n^{-\frac d q}\int_{\|y\|_\infty\le 1}\varphi(y)\left(\int_{Q_n^d(y)}|f-v|^q dx\right)^{\frac 1 q}\, dy,
 \end{array}
 \]
 where $Q_n^d(y):=a_n(Q^d)-(n+1)^{-1}y\subset Q^d$.
 
 Replacing $Q_n^d(y)$ by $Q^d$ and letting $n$ to $\infty$ we derive from here and \eqref{eq5.21} the inequality
 \begin{equation}\label{equat5.24a}
 \varlimsup_{n\to\infty}\|f-f_n\|_q\le 2\varepsilon\to 0\quad {\rm as}\quad \varepsilon\to 0.
 \end{equation}
 This proves assertion (b) for $1<q<\infty$.\smallskip
 
 Now let $q=\infty$. We should prove that for every $f\in\dot V_\kappa\subset L_\infty$
 \begin{equation}\label{e3.20}
 \lim_{n\rightarrow\infty}\int_{Q^d}(f-f_n)g\,dx=0\quad {\rm for\ each}\quad g\in L_1.
 \end{equation}

Since the space $C_0^\infty(Q^d)$ of $C^\infty$ functions compactly supported by the interior $\mathring Q^d$ of $Q^d$ is dense in $L_1$ and the sequence $\{f_n\}_{n\in\N}$ is bounded in $L_\infty$ by \eqref{e3.5}, it suffices to prove that
\[
 \lim_{n\rightarrow\infty}\int_{Q^d}f_ng\,dx=\int_{Q^d}fg\,dx\quad {\rm for\ each}\quad g\in C_0^\infty(Q^d).
\]
Inserting the integral representation of $f_n$ given by \eqref{eq5.8} in the left-hand side of the above equation, changing the order of integration and then making change of variables we obtain
\begin{equation}\label{e3.21}
\begin{array}{l}
\displaystyle
\int_{Q^d}f_ng\,dx=\int_{Q^d} g(x)\left(\int_{\|y\|_\infty}f^0\bigl(a_n(x)-\mbox{$\frac{y}{n+1}$}\bigr)\varphi (y)\, dy\right) dx\\
\\
\displaystyle \lambda_n^{-d}\int_{\|y\|_\infty\le 1}\varphi(y)\left(\int_{\hat Q_n^d(y)}f^0(z)\,g\bigl(a_n^*(z)+\mbox{$\frac{y}{n}$} \bigr)\, dz\right)dy;
\displaystyle
\end{array}
\end{equation}
here $a_n^*$ is the $\lambda_n^{-1}$ dilation of $\RR^d$ with respect to $c=c_{Q^d}$ and $\hat Q_n^d(y)$ is the cube $a_n^*(Q^d)+\frac{y}{n}$ containing $Q^d$.

However, $\hat Q_n^d(y)$ in \eqref{e3.21} can be replaced by $Q^d$, since ${\rm supp}\, f^0=Q^d$.

Thus, we obtain from \eqref{e3.21}
\[
\int_{Q^d}f_ng\, dx=\int_{Q^d}f\hat g_n\, dx,
\]
where we set
\[
\hat g_n(x):=\lambda_n^{-d}\int_{\|y\|_\infty\le 1}g\bigl(a_n^*(x)+\mbox{$\frac y n$}\bigr)\varphi(y)\, dy.
\]
Replacing here $a_n^*(x)$ by $x$ we obtain a regularizer of $g$ denoted by $g_n\, (:=\hat g_n\circ a_n)$. Since $g\in C_0^\infty(Q^d)$, the sequence $\{g_n\}_{n\in\N}$ converges to $g$ uniformly in $x\in Q^d$.

This implies that
\begin{equation}\label{e3.22}
\lim_{n\rightarrow\infty}\int_{Q^d}f\hat g_n\, dx=\int_{Q^d} fg\, dx+\lim_{n\rightarrow\infty}\lambda_n^{-d}\int_{Q^d}(\hat g_n-g_n)f\,dx.
\end{equation}

The integral in the second summand is bounded by
\[
\begin{array}{l}
\displaystyle
\|f\|_\infty\max\left\{\left|g\bigl(a_n^*(x)+\mbox{$\frac y n$}\bigr)-g\bigl(x+\mbox{$\frac{y}{n}$}\bigr)\right|\,:\, x\in Q^d,\, \|y\|_\infty\le 1\right\}\\
\\
\displaystyle \le \|f\|_\infty c(d) |g|_{C^1(Q^d)}\max_{x\in Q^d}\|a_n^*(x)-x\|_\infty.
\end{array}
\]
Since maximum here tends to $0$ and $\lambda_n^{-d}\rightarrow 1$ as $n\rightarrow\infty$, the second summand in \eqref{e3.22} is $0$.

This proves \eqref{e3.20} and assertion (b) of the theorem.
\subsection{Proof of Theorem \ref{teo2.12}\,(a) (conclusion)} Now we prove inequality \eqref{e3.15}.

First, let $q<\infty$. Since the function $f\mapsto E_{kq}(f;Q)$, $f\in L_q$, satisfies
\[
|E_{kq}(f;Q)-E_{kq}(f_n;Q)|\le \|f-f_n\|_q,
\]
we conclude from here and assertion (b) of the theorem that
\begin{equation}\label{e3.23}
\lim_{n\rightarrow\infty} E_{kq}(f_n;Q)=E_{kq}(f;Q).
\end{equation}

Now let $\pi\in\Pi$ be a packing. Then \eqref{e3.23} implies that
\[
\gamma(\pi;f):=\left(\sum_{Q\in\pi}\left(|Q|^{-\lambda} E_{kq}(f;Q)\right)^p\right)^{\frac 1 p}=\lim_{n\rightarrow\infty}\gamma(\pi;f_n)\le\varliminf_{n\rightarrow\infty}|f_n|_{V_\kappa}.
\]
Taking here supremum over all $\pi\in\Pi$ we obtain the required inequality
\[
|f|_{V_\kappa}\le\varliminf_{n\rightarrow\infty}|f_n|_{V_\kappa}.
\]

Now let $q=\infty$. Let $J\subset\N$ be an infinite subset such that
\begin{equation}\label{e3.24a}
\lim_{J\ni j\rightarrow\infty}|f_j|_{V_\kappa}=\varliminf_{n\rightarrow\infty}|f_n|_{V_\kappa}.
\end{equation}
Since for every $Q\subset Q^d$, $j\in J$
\[
E_{k\infty}(f_j;Q)\le |Q|^\lambda |f_j|_{V_\kappa}\le \lambda_j^{d(\lambda-\frac 1 q)}|Q|^\lambda |f|_{V_\kappa},
\]
see the inequality before  \eqref{e3.15}, polynomials $m_j^Q\in\mathcal P_{k-1}^d$ satisfying
\[
E_{k\infty}(f_j;Q)=\|f_j-m_j^Q\|_{L_\infty(Q)}
\]
are uniformly bounded in $L_\infty(Q)$, see the proof of Lemma \ref{le3.3}.
Hence, there is an infinite subset $J^Q\subset J$ and a polynomial $m^Q\in\mathcal P_{k-1}^d$ such that uniformly in $x\in Q$
\[
\lim_{J^Q\ni j\rightarrow\infty}m_j^Q(x)=m^Q(x).
\]
This and \eqref{e3.20} imply that the sequence $\{(f_j-m_j^Q)|_Q\}_{j\in J^Q}$ 
weak$^*$ converges in $L_\infty(Q)$ to $(f-m^Q)|_Q$.

Since the norm of a dual Banach space is countably lower semicontinuous in the weak$^*$ topology, we then have
\begin{equation}\label{e3.24}
E_{k\infty}(f;Q)\le \|f-m^Q\|_{L_\infty(Q)}\le\varliminf_{J^Q\ni j\rightarrow\infty}\|f_j-m_j^Q\|_{L_\infty(Q)}=\varliminf_{J^Q\ni j\rightarrow\infty}E_{k\infty}(f_j;Q).
\end{equation}

Now let $\pi\in\Pi$ be a packing. Using the Cantor diagonal procedure we find an infinite subset $\displaystyle \tilde J\subset\bigcap_{Q\in\pi} J^Q\, (\subset J)$ such that \eqref{e3.24} holds for every $Q\in\pi$ with $\tilde J$ instead of $J^Q$. This and \eqref{e3.24a}, in turn, imply that
\[
\begin{array}{l}
\displaystyle \left\{\sum_{Q\in\pi}\left(|Q|^{-\lambda}E_{k\infty}(f;Q)\right)^p  \right\}^{\frac 1 p}\le \left\{\sum_{Q\in\pi}\varliminf_{\tilde J\ni j\rightarrow\infty}\left(|Q|^{-\lambda}E_{k\infty}(f_j;Q)\right)^p  \right\}^{\frac 1 p}\\
\\
\displaystyle \le \varliminf_{\tilde J\ni j\rightarrow\infty}\left\{\left(|Q|^{-\lambda}E_{k\infty}(f_j;Q)\right)^p  \right\}^{\frac 1 p}\le \varliminf_{\tilde J\ni j\rightarrow\infty}|f_j|_{V_\kappa}=\varliminf_{n\rightarrow\infty}|f_n|_{V_\kappa}.
\end{array}
\]

Taking here supremum over all $\pi\in\Pi$ we obtain \eqref{e3.15} for $q=\infty$ as well.

It remains to combine \eqref{e3.15} and \eqref{e3.5} to obtain
\[
\lim_{n\rightarrow\infty}|f_n|_{V_\kappa}=|f|_{V_\kappa}.
\]

This completes the proof of assertion (a) of Theorem \ref{teo2.12} for $s(\kappa)\le k$, see \eqref{e3.4}.

The proof of Theorem \ref{teo2.12} is complete.

\subsection{Proof of Theorem \ref{teor2.13} }

Let $\dot V_\kappa^0$ denote a linear space of functions $f\in L_q$ satisfying
\begin{equation}\label{eq5.1}
\lim_{\varepsilon\rightarrow 0}\sup_{|\pi|\le\varepsilon}\left(\sum_{Q\in\pi}\bigl(|Q|^{-\lambda}E_{kq}(f;Q)\bigr)^p\right)^{\frac 1 p}=0,
\end{equation}
where $|\pi|:=\sup_{Q\in\pi}|Q|$, and $V_\kappa^0:=\dot V_\kappa^0/\mathcal P_{k-1}^d$.

We begin with the following assertion.\medskip

{\em $V_\kappa^0$ is a closed subspace of the space $V_\kappa$.}\medskip

First we show that $V_\kappa^0$ is a linear subspace of $V_\kappa$.

Let $f\in \dot V_\kappa^0$, and $\varepsilon_0>0$ and $c:=c(\varepsilon_0)$ be such that
\begin{equation}\label{eq5.2}
\gamma(\pi;f):=\left( \sum_{Q\in\pi}\bigl(|Q|^{-\lambda}E_{kq}(f;Q)\bigr)^p \right)^{\frac 1 p}\le c<\infty
\end{equation}
for every packing $\pi$ with $|\pi|\le\varepsilon_0$.

Now, an arbitrary $\pi$ is decomposed into packings
$\pi_1,\pi_2$ such that
\[
|\pi_1|\le\varepsilon_0\quad {\rm and}\quad |Q|>\varepsilon_0\quad {\rm for\ every}\quad Q\in\pi_2.
\]
By the second condition ${\rm card}\, \pi_2\le \frac{|Q^q|}{\varepsilon_0}=\varepsilon_0^{-1}$, hence, we have
\begin{equation}\label{eq5.4}
\begin{array}{l}
\displaystyle
\gamma(\pi;f)\le \bigl(\gamma(\pi_1;f)^p+\gamma(\pi_2;f)^p\bigr)^{\frac 1 p}\le \bigl(c(\varepsilon_0)^p+\varepsilon_0^{-1-\lambda p}\max_{Q\in\pi_2} E_{kq}(f;Q)^p\bigr)^{\frac 1 p}\\
\\
\qquad\quad\ \le
c(\varepsilon_0,\lambda,p)(1+\|f\|_q).
\end{array}
\end{equation}
We conclude that
\[
|f|_{ V_\kappa}:=\sup_{\pi}\gamma(\pi;f)<\infty\quad {\rm for\ every}\quad f\in \dot V_\kappa^0,
\]
i.e., $V_\kappa^0$ is a linear subspace of $V_\kappa$.

It remains to prove closedness of $V_\kappa^0$ in $V_\kappa$. To this end we define a seminorm  $T: V_\kappa\rightarrow\RR_+$ given for $\hat f:=\{f\}+\mathcal P_{k-1}^d\in V_\kappa$ by 
\[
T(\hat f):=\lim_{\varepsilon\rightarrow 0}\sup_{|\pi|\le\varepsilon}\left(\sum_{Q\in\pi}\bigl(|Q|^{-\lambda}E_{kq}(f;Q)\bigr)^p\right)^{\frac 1 p}.
\]
Since $T(\hat f)\le \|\hat f\|_{V_\kappa}$ for all $\hat f\in V_\kappa$, seminorm $T$ is continuous on $V_\kappa$. This implies closedness of the preimage $T^{-1}(\{0\})=V_\kappa^0$ in $V_\kappa$.\smallskip

Further,  we prove that {\em under the assumptions for $\kappa:=\{\lambda,p,q\}$
\begin{equation}\label{eq5.5}
1\le p\le\infty,\quad 1\le q<\infty\quad {\rm and}\quad s(\kappa)<k
\end{equation}
the subspaces $\textsc{v}_\kappa$ and $V_\kappa^0$ coincide.}\smallskip

First, we show that $\textsc{v}_\kappa$ is a closed subspace of $V_\kappa^0$. Since $\textsc{v}_\kappa={\rm clos}(C^\infty/\mathcal P_{k-1}^d,V_\kappa)$ and $V_\kappa^0$ is closed in $V_\kappa$, it suffices to prove that $C^\infty\subset \dot V_\kappa^0$.

To this end we estimate  
$E_{kq}(f;Q)$ with $f\in C^\infty$ by the Taylor formula as follows  
\[
E_{kq}(f; Q)\le c(k,d)|Q|^{\frac k d +\frac 1 q}\max_{|\alpha|=k}\max_Q |D^\alpha f |.
\]
This implies that, see \eqref{equ1.9},  
\begin{equation}\label{e3.32}
\begin{array}{l}
\displaystyle
\gamma(\pi;f):=\left(\sum_{Q\in\pi}\bigl(|Q|^{-\lambda}E_{kq}(f;Q)\bigr)^p\right)^{\frac 1 p}\le c\left(\sum_{Q\in\pi}|Q|^{\bigl(-\lambda+\frac k d +\frac 1 q\bigr)p}\right)^{\frac 1 p}\\
\\
\displaystyle =c\left(\sum_{Q\in\pi}|Q|^{\frac{k-s(\kappa)}{d}p+1}\right)^{\frac 1 p}\le c\max_{Q\in\pi}|Q|^{\frac{k-s(\kappa)}{d}}\left(\sum_{Q\in\pi}|Q|\right)^{\frac 1 p};
\end{array}
\end{equation}
where $c=c(k,d,f):=c(k,d)\max_{|\alpha|=k}\max_Q |D^\alpha f |$.

Since $s(\kappa)<k$ and the last sum in \eqref{e3.32} is $\le 1$, we obtain that
\[
\sup_{|\pi|\le\varepsilon}\gamma(\pi;f)\le c\varepsilon^{\frac{k-s(\kappa)}{d}}\rightarrow 0\quad {\rm as}\quad \varepsilon\rightarrow 0,
\]
i.e., $f\in V_\kappa^0$ as required.
\begin{R}\label{rem5.1}
{\rm In the proof of the embedding  $\textsc{v}_\kappa\subset V_\kappa^0$, the restriction $q<\infty$ is not used.}
\end{R}
The converse embedding follows from the next result.
\begin{Lm}\label{lem5.2}
Let $f\in \dot V_\kappa^0$. There is a sequence $\{f_n\}_{n\in\N}\subset C^\infty$ such that
\begin{equation}\label{eq5.6}
|f-f_n|_{ V_\kappa}
\rightarrow 0\quad {\rm as}\quad n\rightarrow \infty.
\end{equation}
\end{Lm}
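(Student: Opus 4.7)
The plan is to take $f_n := T_n(f) \in C^\infty$, the regularization defined by \eqref{eq5.7}--\eqref{eq5.9}. Since $q<\infty$ by \eqref{eq5.5}, Theorem \ref{teo2.12}(b) already gives $\|f-f_n\|_q \to 0$, so only the bound $|f-f_n|_{V_\kappa} \to 0$ remains. Given an arbitrary $\pi \in \Pi$ and a threshold $\varepsilon \in (0,1]$ to be chosen later, I would split $\pi = \pi_1 \sqcup \pi_2$ with $\pi_1 := \{Q \in \pi \,:\, |Q| \le \varepsilon\}$ and estimate the two pieces of
\begin{equation*}
\gamma(\pi; f-f_n)^p = \gamma(\pi_1; f-f_n)^p + \gamma(\pi_2; f-f_n)^p
\end{equation*}
(with the natural interpretation for $p=\infty$) separately.

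For the small-cube part, Minkowski gives $\gamma(\pi_1; f-f_n) \le \gamma(\pi_1; f) + \gamma(\pi_1; f_n)$. The first summand is bounded by $\eta(\varepsilon) := \sup_{|\pi'|\le\varepsilon}\gamma(\pi'; f)$, which vanishes as $\varepsilon \to 0$ since $f \in \dot V_\kappa^0$. For the second summand I would reuse the chain of estimates from the proof of Theorem~\ref{teo2.12}(a): starting from \eqref{eq5.15} and exchanging the integral over $y$ with the $\ell_p$-sum over $\pi_1$ via Minkowski yields
\begin{equation*}
\gamma(\pi_1; f_n) \le \lambda_n^{d(\lambda - 1/q)} \int_{\|y\|_\infty \le 1} \gamma(\pi_1(y); f)\, \varphi(y)\, dy,
\end{equation*}
where $\pi_1(y) := \{Q_n(y) : Q \in \pi_1\}$ is again a packing with $|\pi_1(y)| = \lambda_n^d\, |\pi_1| \le \varepsilon$. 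Hence $\gamma(\pi_1(y); f) \le \eta(\varepsilon)$ and, since $\lambda_n \to 1$, we obtain $\gamma(\pi_1; f_n) \le C\, \eta(\varepsilon)$ with $C$ bounded uniformly in $n$.

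For the large-cube part, the crude bounds $\operatorname{card}\pi_2 \le \varepsilon^{-1}$, $E_{kq}(f-f_n; Q) \le \|f-f_n\|_q$, and $|Q|^{-\lambda} \le \max(1, \varepsilon^{-\lambda})$ combine to yield $\gamma(\pi_2; f-f_n) \le C(\varepsilon, p, \lambda)\, \|f-f_n\|_q$. Combining with the small-cube estimate and taking supremum over $\pi$ gives
\begin{equation*}
|f-f_n|_{V_\kappa} \le C_1\, \eta(\varepsilon) + C_2(\varepsilon)\, \|f-f_n\|_q.
\end{equation*}
Given $\delta > 0$, I would first choose $\varepsilon$ small enough that $C_1 \eta(\varepsilon) < \delta/2$, then $n$ large enough that $C_2(\varepsilon)\,\|f-f_n\|_q < \delta/2$; this delivers \eqref{eq5.6}.

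The main obstacle is the small-cube estimate for $\gamma(\pi_1; f_n)$: the trivial bound $\gamma(\pi_1; f_n) \le |f_n|_{V_\kappa}$ is finite but not small in $\varepsilon$. The key observation — the content of inequality \eqref{eq5.15} combined with the Minkowski exchange — is that the regularizer $T_n$ averages local approximations of $f$ over slightly shrunk subcubes $Q_n(y) \subset Q$, so that the defining vanishing property of $\dot V_\kappa^0$ transfers quantitatively from $f$ to $f_n$ with the same rate in $\varepsilon$.
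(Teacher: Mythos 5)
Your proposal is correct and follows essentially the same route as the paper's own proof: the same regularizer $f_n=T_n(f)$, the same small/large cube split of $\pi$, the Minkowski exchange applied to \eqref{eq5.15} (equivalently \eqref{e3.16a}) to transfer the $\dot V_\kappa^0$ vanishing property to $\gamma(\pi_1;f_n)$, and the crude cardinality bound plus Theorem~\ref{teo2.12}(b) for the large-cube part. The only differences are immaterial normalizations (you threshold on volume $|Q|\le\varepsilon$ where the paper thresholds on sidelength $<\varepsilon$, and you track the $\lambda<0$ case slightly more carefully in the constant $C_2(\varepsilon)$).
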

\begin{proof}
Let us show that the sequence $\{f_n\}_{n\in\N}$ given by \eqref{eq5.9} is the required one. 

Let $\varepsilon\in (0,1)$ and  $\pi\in\Pi$; we write $\pi=\pi_1\cup\pi_2$, where $\pi_1$ consists of all cubes $Q\in\pi$ of sidelengths $<\varepsilon$ and $\pi_2:=\pi\setminus \pi_1$. Then
\begin{equation}\label{eq5.11}
{\rm card}\, \pi_2\le\frac{|Q^d|}{\varepsilon^d}=\varepsilon^{-d}.
\end{equation}

Now we write using notation of \eqref{eq5.2}

\begin{equation}\label{eq5.12}
\gamma(\pi;f-f_n)\le\sum_{i=1}^2\gamma(\pi_i;f-f_n)
\end{equation}
and estimate each term of the right-hand side.

Since cubes in $\pi_2$ have sidelengths $\ge\varepsilon$ we obtain, as in  the derivation of \eqref{eq5.4}, the inequality
\begin{equation}\label{eq5.13}
\gamma(\pi_2;f-f_n)\le\varepsilon^{-(\lambda+\frac 1 p )d}\,\|f-f_n\|_{q}.
\end{equation}
Next, we prove that
\begin{equation}\label{eq5.14}
\gamma(\pi_1;f-f_n)\le \left(1+\lambda_n^{d(\lambda -\frac 1 q) }\right)\sup_{|\pi|\le\varepsilon^d}\gamma(\pi;f).
\end{equation}
To this end, first we write
\[
\gamma(\pi_1;f-f_n)\le\gamma(\pi_1;f)+\gamma(\pi_1;f_n)
\]
and note that by the definition of $\pi_1$
\begin{equation}\label{e3.37}
\gamma(\pi_1;f)\le\sup_{|\pi|\le\varepsilon^d}\gamma(\pi;f).
\end{equation}
Moreover, by \eqref{e3.16a} following the Minkowski inequality we also have
\[
\gamma(\pi_1;f_n)\le \lambda_n^{d(\lambda -\frac 1 q) }\int_{\|y\|_\infty\le 1}\gamma(\pi_1(y);f)\varphi(y)dy,
\]
where $\pi_1(y):=\{Q_n(y)\}_{Q\in\pi_1}=:a_n(\pi_1)-(n+1)^{-1}y$ is also a packing of subcubes of $Q^d$ whose sidelengths $\le\lambda_n\varepsilon<\varepsilon$.
Hence, we conclude that
\[
\gamma(\pi_1;f)\le \lambda_n^{d(\lambda -\frac 1 q) }
\left(\sup_{|\pi|\le\varepsilon^d}\gamma(\pi;f)\right)\int_{\|y\|_\infty\le 1}\varphi\, dy=\lambda_n^{d(\lambda -\frac 1 q) } \sup_{|\pi|\le\varepsilon^d}\gamma(\pi;f).
\]
This and \eqref{e3.37} complete the proof of inequality \eqref{eq5.14}.

Now inequalities \eqref{eq5.12}--\eqref{eq5.14} imply
\begin{equation}\label{eq5.19}
\gamma(\pi;f-f_n)\le \left(1+\lambda_n^{d(\lambda -\frac 1 q) }\right)\sup_{|\pi|\le\varepsilon^d}\gamma(\pi;f)+\varepsilon^{-d(\lambda +\frac 1 p )}\|f-f_n\|_{q}.
\end{equation}

Taking here 
supremum over all packings $\pi$ and then letting $n$ to $\infty$ we have
\[
\varlimsup_{n\rightarrow\infty}|f-f_n|_{ V_\kappa}=\varlimsup_{n\rightarrow\infty}\sup_{\pi}\gamma(\pi;f-f_n)\le 2\sup_{|\pi|\le\varepsilon^d}\gamma(\pi;f)+\varlimsup_{n\rightarrow\infty}\|f-f_n\|_q.
\]
Since $f\in \dot V_\kappa^0$ and $q<\infty$, the first term on the right-hand side  tends to $0$ as $\varepsilon\rightarrow 0$ by the definition of $\dot V_\kappa^0$ and the second term tends to $0$ as $n\rightarrow\infty$ by Theorem \ref{teo2.12}\,(b).

Thus,  \eqref{eq5.6} is proved.
\end{proof}

The lemma implies that $V_\kappa^0\subset {\rm clos}(C^\infty/\mathcal P_{k-1}^d, V_\kappa)=:\textsc{v}_\kappa$. Together with the converse embedding this proves coincidence of $\textsc{v}_\kappa$ and $V_\kappa^0$ under conditions \eqref{eq5.5}.

To complete the proof of Theorem \ref{teor2.13} we must show that the spaces $V_\kappa^0$ and $\textsc{v}_\kappa$ coincide for $\kappa:=\{\lambda,p,q\}$ satisfying the conditions
\begin{equation}\label{e3.40}
1\le p\le\infty,\quad \lambda\ge 0 \quad {\rm and}\quad s(\kappa)<k.
\end{equation}

To this end, note that $\textsc{v}_\kappa\subset V_\kappa^0$ for $q=\infty$, see Remark \ref{rem5.1}, and  inequality \eqref{eq5.19} is proved for all $1\le q\le\infty$.
Hence, for $q=\infty$ and $f_n$ defined by \eqref{eq5.9} with $f\in \dot V_\kappa^0$ we have
\begin{equation}\label{eq5.24}
\gamma(\pi;f-f_n)\le 2\sup_{|\pi|\le\varepsilon^d}\gamma(\pi;f)+\varepsilon^{-d(\lambda+\frac 1 p )}\|f-f_n\|_\infty.
\end{equation}
Thus, as before the converse embedding $V_\kappa^0\subset\textsc{v}_\kappa$ will be proved if we show that
\[
\lim_{n\rightarrow\infty}\|f-f_n\|_\infty=0.
\]
As in the proof of Theorem \ref{teo2.12}\,(b), see Subsection~3.3, this is derived from the following:
\begin{Lm}\label{lem5.3}
Under conditions \eqref{e3.40} for every $f\in\dot V_\kappa^0$ and $\eta\in (0,1)$ there is a function $v\in C^\infty$ such that
\begin{equation}\label{eq5.25}
\|f-v\|_\infty<\eta .
\end{equation}
\end{Lm}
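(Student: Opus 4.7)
\emph{Proposal of proof.} The plan is to construct $v\in C^\infty$ by patching together local polynomial best-approximants of $f$ via a smooth partition of unity, using that the $\dot V_\kappa^0$ condition with $\lambda\ge 0$ forces the local $L_\infty$ approximation error on small cubes to be uniformly small.

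First I would derive from $f\in\dot V_\kappa^0$ and $\lambda\ge 0$ that $E_{k\infty}(f;Q)\to 0$ uniformly as the sidelength of $Q\subset Q^d$ tends to $0$. Indeed, applying the defining condition of $\dot V_\kappa^0$ to the singleton packing $\pi=\{Q\}$ gives
\[
|Q|^{-\lambda}E_{k\infty}(f;Q)\le\omega(|Q|),\qquad \omega(\varepsilon):=\sup_{|\pi|\le\varepsilon}\gamma(\pi;f),
\]
with $\omega(\varepsilon)\to 0$ as $\varepsilon\to 0$. Since $\lambda\ge 0$ and $|Q|\le 1$, we obtain $E_{k\infty}(f;Q)\le\omega(|Q|)$, uniformly in the position of $Q$.

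Given $\eta\in(0,1)$, choose $\delta>0$ so that $E_{k\infty}(f;Q)<\eta/2$ for every cube $Q\subset Q^d$ of sidelength at most $\delta$. Cover $Q^d$ by a finite collection of subcubes $\{Q_i^*\}_{i=1}^N\subset Q^d$ of sidelength $\delta$ with bounded overlap depending only on $d$, and take a subordinate $C^\infty$ partition of unity $\{\varphi_i\}$ with $\mathrm{supp}\,\varphi_i\subset Q_i^*$ and $\sum_i\varphi_i\equiv 1$ on $Q^d$. For each $i$, let $m_i\in\mathcal P_{k-1}^d$ realize $\|f-m_i\|_{L_\infty(Q_i^*)}=E_{k\infty}(f;Q_i^*)$; existence follows from the finite-dimensionality of $\mathcal P_{k-1}^d$ as in the proof of Lemma \ref{le3.3}. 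Set $v:=\sum_i\varphi_i m_i$; since the sum is finite and each summand is $C^\infty$, $v\in C^\infty$. For a.e.\ $x\in Q^d$,
\[
|f(x)-v(x)|=\Big|\sum_i\varphi_i(x)\bigl(f(x)-m_i(x)\bigr)\Big|\le\sum_i\varphi_i(x)\,\|f-m_i\|_{L_\infty(Q_i^*)}<\eta/2,
\]
hence $\|f-v\|_\infty\le\eta/2<\eta$, as required.

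The main technical subtlety is arranging the cover $\{Q_i^*\}$ so that each $Q_i^*$ is a genuine subcube of $Q^d$ (the class of cubes over which $\dot V_\kappa^0$ is defined) while still covering $Q^d$ and admitting a smooth subordinate partition of unity. This is standard but requires some care near $\partial Q^d$; one convenient option is to start from a regular $(\delta/2)$-grid and translate the cubes meeting $\partial Q^d$ inward so they remain in $Q^d$ while still covering the boundary strip.
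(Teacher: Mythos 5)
Your argument takes a genuinely different route from the paper. After the common first step (deducing from $f\in\dot V_\kappa^0$, $\lambda\ge 0$ and singleton packings that $E_{k\infty}(f;Q)\le\omega(|Q|)\to 0$ uniformly as $|Q|\to 0$), the paper passes to moduli of continuity: it bounds $\omega_{k\infty}(f;t)$ by $\sup_{|Q|\le (kt)^d}E_{k\infty}(f;Q)$, uses Marchaud's inequality to control $\omega_{1\infty}(f;t)$, and then invokes a tensorized Jackson operator to produce a polynomial (in particular $C^\infty$) approximant in $L_\infty$. You instead glue local best $L_\infty$-polynomial fits with a smooth partition of unity, which is a more elementary and more "geometric" device, avoids Marchaud/Jackson entirely, and generalizes more readily to metric-space settings; the paper's route has the side benefit of producing a polynomial approximant of controlled degree, which is not needed here.

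However, the way you set up the partition of unity has a real defect which your suggested remedy does not cure. You require $\varphi_i\in C^\infty(\RR^d)$ with $\mathrm{supp}\,\varphi_i\subset Q_i^*\subset Q^d$ and $\sum_i\varphi_i\equiv 1$ on $Q^d$. But any continuous $\varphi_i$ vanishing on the open set $\RR^d\setminus Q_i^*$ also vanishes on its closure, hence on $\partial Q_i^*$. Any subcube $Q_i^*\subset Q^d$ that reaches a point of $\partial Q^d$ can only do so along $\partial Q_i^*$, where $\varphi_i=0$; therefore every $\varphi_i$ vanishes on $\partial Q^d$ and $\sum_i\varphi_i$ cannot equal $1$ there. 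Translating cubes "inward" keeps them in $Q^d$ but does not change this, so the fix you propose fails. The correct repair is to drop the requirement $\mathrm{supp}\,\varphi_i\subset Q^d$: take a standard $C^\infty$ partition of unity $\{\psi_j\}$ on a neighborhood of $Q^d$ subordinate to a grid of cubes $\hat Q_j$ of sidelength $\delta/2$ (these may protrude outside $Q^d$), note that each nonempty $\hat Q_j\cap Q^d$ is contained in some subcube $Q_j\subset Q^d$ of sidelength $\le\delta$, choose $m_j\in\mathcal P_{k-1}^d$ realizing $E_{k\infty}(f;Q_j)$, and set $v:=\sum_j\psi_j m_j$. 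Then for a.e.\ $x\in Q^d$ and every $j$ with $\psi_j(x)\ne 0$ one has $x\in\hat Q_j\cap Q^d\subset Q_j$, so $|f(x)-m_j(x)|\le E_{k\infty}(f;Q_j)<\eta/2$, and the pointwise convexity bound gives $\|f-v\|_\infty\le\eta/2<\eta$. With that modification your proof is complete.
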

\begin{proof}
Since $\lambda\ge 0$, we have for $f\in \dot V_\kappa^0$
\[
\sup_{|Q|\le\varepsilon^d} E_{k\infty}(f;Q)\le\sup_{|\pi|\le\varepsilon^d}\left(\sum_{Q\in\pi}\left(|Q|^{-\lambda} E_{k\infty}(f;Q)\right)^p\right)^{\frac 1 p}\to 0\quad {\rm as}\quad \varepsilon\to 0.
\]
Moreover, since the $k$-th difference $\Delta_h^k$, see \eqref{equa1.8}, annihilates polynomials from $\mathcal P_{k-1}^d$ and is supported by a cube of sidelength at most $k\|h\|_\infty$, the $k$-modulus of continuity, see \eqref{equat1.25} for its definition, satisfies
\begin{equation}\label{eq5.26}
\omega_{k\infty}(f;t)\le 2^k\sup_{|Q|\le (kt)^d} E_{k\infty}(f;Q)\to 0\quad {\rm as}\quad t\to 0.
\end{equation}
Due to Marchaud's inequality estimating $\omega_{1q}$ via $\omega_{kq}$, see, e.g., \cite{DVL-96}, this implies that
\begin{equation}\label{eq5.28}
\lim_{t\to 0}\omega_{1\infty}(f;t)= 0.
\end{equation}

To proceed we need the next result.\medskip

\noindent \underline{\bf Theorem} ({\em Jackson}, see, e.g., \cite{DVL-96}). For every $n\in\mathbb N$ there is a linear operator $T_n : C_\infty(0,1)\rightarrow\mathcal P_{4n}^1$ such that
\begin{equation}\label{eq5.29}
\|f-T_n\|_{C[0,1]}\le c_0\sup_{|x-y|\le n^{-1}}|f(x)-f(y)|:=\omega_{1\infty}(f;n^{-1}),
\end{equation}
where $c_0$ is a numerical constant.\medskip

It is essential that $T_n$ is an {\em integral} operator; hence, \eqref{eq5.29} can be extended to functions $f\in L_\infty(0,1)$. 

Further, let $T_n^if$, $f\in L_\infty(Q^d)$, be the result of the application of $T_n$ to the function $x_i\mapsto f(x)$, $x=(x_1,\dots, x_d)$, $0\le x_i\le 1$. By the Fubini theorem the operators $T_n^i$, $1\le i\le d$, are pairwise commute, hence, $\mathcal T_n f:=\left(\prod_{i=1}^d T_n^i\right)f$ is a polynomial of degree $4n$ in each variable $x_i$.
Moreover, by \eqref{eq5.29}
\begin{equation}\label{eq5.30}
\|f-\mathcal T_n f\|_\infty\le\sum_{i=1}^d\|f-T_n^if\|_\infty\le dc_0\omega_{1\infty}(f;n^{-1}).
\end{equation}
This and \eqref{eq5.28} complete the proof of \eqref{eq5.25}.
\end{proof}

Now \eqref{eq5.25} gives, cf. \eqref{equat5.24a}, 
\[
\varlimsup_{n\to\infty}\|f-f_n\|_\infty\le 2\eta
\]
which along with \eqref{eq5.24}  implies for $f\in\dot V_\kappa^0$ that
\[
\varlimsup_{n\to\infty}|f-f|_{V_\kappa}\le 2\sup_{|\pi|\le\varepsilon^d}\gamma(\pi;f)+\varepsilon^{-d(\lambda+\frac 1 p )}2\eta.
\]
Letting here $\eta\rightarrow 0$ and then $\varepsilon\rightarrow 0$ we conclude that $f\in {\rm clos}(C^\infty,\dot V_\kappa)$.

This proves the converse inequality
\[
V_\kappa^0\subset {\rm clos}(C^\infty/\mathcal P_{k-1}^d, V_\kappa)=:\textsc{v}_\kappa
\]
and therefore coincidence of $V_\kappa^0$ and $\textsc{v}_\kappa$ under conditions \eqref{e3.40}.

The proof of Theorem \ref{teor2.13} is complete.

\sect{Proof of Theorems \ref{prop1.4}}
\noindent (a) For the set
\begin{equation}\label{equ3.1}
\mathcal B_\kappa:=\{b_\pi\in U_\kappa^0\, :\, [b_\pi]_{p'}\le 1 \}
\end{equation}
we denote by $\widehat{\mathcal B}_\kappa\subset U_\kappa$ the closure of its convex symmetric hull, i.e.,
\[
\widehat{\mathcal B}_\kappa:={\rm clos}\left(\left\{\sum_\pi \, \lambda_\pi b_\pi\, :\, \sum_\pi\, |\lambda_\pi|\le 1\, ,\, \{b_\pi\}\subset \mathcal B_\kappa\right\},U_\kappa\right).
\]
Since 
\[
\left\|\sum_\pi \, \lambda_\pi b_\pi\right\|_{U_\kappa}\le \sum_\pi |\lambda_\pi|\, [b_\pi ]_{p'}\le 1,
\]
see \eqref{eq2.5}, we conclude that
\[
\widehat{\mathcal B}_\kappa\subset B(U_\kappa).
\]
We should prove that these  sets coincide.

Let on the contrary $B(U_\kappa)\setminus\widehat{\mathcal B}_\kappa\ne\emptyset$ and $h\in B(U_\kappa)\setminus\widehat{\mathcal B}_\kappa$ is of norm $1$. Then by the Hahn-Banach theorem there is a functional $F\in U_\kappa^*$ of norm $1$ strictly separating $\widehat{\mathcal B}_\kappa$ and $\{h\}$, that is,
\begin{equation}\label{equ3.2}
F(h)=1\qquad {\rm and}\qquad \sup_{b_{\pi}\in \mathcal B_\kappa}|F(b_\pi)|\le 1-\varepsilon
\end{equation}
for some $\varepsilon\in (0,1)$.

Further, $B(U_\kappa^0)=B(U_\kappa)\cap U_\kappa^0$ is dense and $B(U_\kappa)\setminus\widehat{\mathcal B}_\kappa$ is open in $B(U_\kappa)$. Hence, there is a sufficiently close to $h$ element $g\in B(U_\kappa^0)\setminus\widehat{\mathcal B}_\kappa$ such that
\begin{equation}\label{equ3.3}
\|g\|_{U_\kappa}<1\qquad {\rm and}\qquad 1=F(h)\ge F(g)\ge 1-\frac \varepsilon 2 .
\end{equation}
By \eqref{eq2.5} there is a representation $g=\sum_\pi b_\pi$ such that $\sum_\pi [b_\pi]_{p'}\le 1$.

Now let 
\[
b_\pi^*:=\frac{b_\pi}{[b_\pi]_{p'}}.
\]
Using again \eqref{eq2.5} we have $\|b_\pi^*\|_{U_\kappa}\le 1$; hence, 
$|F(b_\pi^*)|\le 1-\varepsilon$ by \eqref{equ3.2}. This and \eqref{equ3.3} then imply
\[
1-\frac \varepsilon 2\le |F(g)|\le \sum_\pi [b_\pi]_{p'} |F(b_\pi^*)|\le 1-\varepsilon,
\]
a contradiction.

Thus, $B(U_\kappa)=\widehat{\mathcal B}_\kappa$. \medskip

\noindent (b) We should prove separability of the space $(U_\kappa,\|\cdot\|_{U_\kappa})$ under the assumption
\begin{equation}\label{equ3.4}
1<q:=q(\kappa)\le\infty,\quad 1\le p:=p(\kappa)\le\infty .
\end{equation}
To this end we define a closed linear subspace $\hat L_{q'}$ of $L_{q'}$  by the conditions 
\begin{equation}\label{equ3.5}
\int_{Q^d} f(x)x^\alpha\, dx=0,\quad |\alpha|\le k-1,\quad f\in L_{q'}.
\end{equation}
By our definition, $U_{\kappa}^0\subset\hat L_{q'}$. Since $q'<\infty$ by \eqref{equ3.4}, the metric space $L_{q'}$ and hence  $(U_{\kappa}^0,\|\cdot\|_{q'})$ are separable. 

Further, every function $f\in \hat L_{q'}$ is a $\kappa$-chain subordinate to the packing $\pi=\{Q^d\}$. 

In fact, a function $f=c_{Q^d}\,a_{Q^d}$, where $c_{Q^d}:=\|f\|_{q'}$ and $a_{Q^d}:=f/\|f\|_{q'}$, vanishes on $\mathcal P_{k-1}^d$ and, moreover, $\|a_{Q^d}\|_{q'}=1\, (=|Q^d|^{-\lambda})$. Hence, by the definition of the seminorm of   $U_\kappa$, 
\begin{equation}\label{equ3.6}
\|f\|_{U_\kappa}\le |c_{Q^d}|=\|f\|_{q'}.
\end{equation}
In other words, the linear embedding
\begin{equation}\label{equ3.7}
\hat L_{q'}\subset U_\kappa
\end{equation} 
holds with the embedding constant $1$. In particular, if $S$ is a dense countable subset of $\hat L_{q'}$ with respect to the topology defined by norm $\|\cdot\|_{q'}$, then it is dense with respect to the topology defined by seminorm $\|\cdot\|_{U_{\kappa}}$. Since $U_\kappa^0$ is a dense subspace of $U_\kappa$, the set $S$ is dense in $U_\kappa$ as well.

This completes the proof of part (b) of the theorem.
\begin{R}\label{rem3.1}
{\rm The argument of the proof and our definition of $U_\kappa^0$ show that under the assumptions of part (b) of the theorem $U_{\kappa}^0=\hat L_{q'}$. Thus $U_{\kappa}$ is the completion of $\hat L_{q'}$ with respect to the seminorm $\|\cdot\|_{U_{\kappa}^0}$.
}
\end{R}

\noindent (c) We should prove under the assumptions
\begin{equation}\label{equ3.8}
1<q:=q(\kappa)\le\infty,\quad 1\le p:=p(\kappa)\le\infty,\quad s(\kappa)\le k,
\end{equation}
that the space $U_\kappa$ is Banach.

Since $U_\kappa$ is the completion of $(U_\kappa^0,\|\cdot\|_{U_\kappa^0})$, it suffices to prove that $\|\cdot\|_{U_\kappa^0}$ is a norm, i.e., that if $\|g\|_{U_\kappa^0}=0$ for some $g\in U_{\kappa}^0$, then $g=0$. To prove this, we first show that for every $f\in \dot V_\kappa$ and $g\in U_\kappa^0$,
\begin{equation}\label{equ3.9}
\left|\,\int_{Q^d}fg\,dx\,\right|\le|f|_{V_\kappa}\,\|g\|_{U_\kappa};
\end{equation}
the integral exists, since $\dot V_\kappa\subset L_q$ and $U_\kappa^0= \hat L_{q'}\subset L_{q'}$.

Let $m_Q\in\mathcal P_{k-1}^d$ be such that
\[
\|f-m_Q\|_{L_q(Q)}=E_{kq}(f;Q).
\]
Then for $b_\pi:=\sum_{Q\in\pi}c_Q\,a_Q$ we obtain by the definition of $\kappa$-atoms
\[
\int_{Q^d}f b_\pi \,dx=\sum_{Q\in\pi} c_Q\,\int_Q (f-m_Q)\,a_Q\,dx.
\]
Applying twice the H\"{o}lder inequality we derive from here
\[
\begin{array}{l}
\displaystyle \left|\,\int_{Q^d}f b_\pi \,dx\,\right|\le\left(\sum_{Q\in\pi} |c_Q|^{p'}\right)^{\frac{1}{p'}}\left(\sum_{Q\in\pi}\bigl(\|f-m_Q\|_{L_q(Q)}\,\|a_Q\|_{q'}\bigr)^p\right)^{\frac{1}{p}}\\
\\
\displaystyle \qquad\qquad\qquad \le [b_\pi]_{p'} \left(\sum_{Q\in\pi}\bigl(|Q|^{-\lambda}E_{kq}(f;Q)\bigr)^p\right)^{\frac 1p}\le [b_\pi]_{p'} |f|_{V_\kappa}.
\end{array}
\]
Applying this estimate to $g\in U_\kappa^0$ that can be represented as a finite sum of $\kappa$-chains $b_\pi$, and then taking infimum over all such representations we obtain that
\[
\left|\,\int_{Q^d}f g \,dx\,\right|\le\left(\inf\sum_{\pi}\,[b_\pi]_{p'}\right) |f|_{V_\kappa}=\|g\|_{U_\kappa}\, |f|_{V_\kappa},
\]
as required.

Then from \eqref{equ3.9} and the equality $\|g\|_{U_\kappa}=0$ we obtain that
\begin{equation}\label{equ3.10}
\int_{Q^d}fg\, dx=0\quad {\rm for\ all}\quad f\in \dot V_\kappa .
\end{equation}

Next, we show that $C^\infty:=C^\infty(\RR^d)|_{Q^d}\subset \dot V_\kappa$ (in particular,  \eqref{equ3.10} is valid for all $f\in C^\infty$).

Let $\varphi\in C^\infty$. By the Taylor formula we have for $Q\subset Q^d$,
\[
E_{kq}(\varphi; Q)\le c(k,d)|Q|^{\frac k d +\frac 1 q}\max_{|\alpha|=k}\max_Q |D^\alpha \varphi|\le c(k,d,\varphi)|Q|^{\frac k d +\frac 1 q}.
\]
This implies that 
\begin{equation}\label{equat3.11}
|\varphi |_{V_\kappa}:=\sup_{\pi}\left(\sum_{Q\in\pi}\bigl(|Q|^{-\lambda}E_{kq}(\varphi;Q)\bigr)^p\right)^{\frac 1 p}\le c(k,d,\varphi)\sup_{\pi}\left(\sum_{Q\in\pi}|Q|^{\bigl(-\lambda+\frac k d +\frac 1 q\bigr)p}\right)^{\frac 1 p}.
\end{equation}
Here the power of $|Q|$ equals $\frac{k-s(\kappa)}{d}\,p+1$, see \eqref{equ1.2}, and by \eqref{equ3.8} $s(\kappa)\le k$. Hence, the sum in the right-hand side is bounded from above
by $\left(\sum_{Q\in\pi}|Q|\right)^{\frac 1 p}\le |Q^d|^{\frac 1 p}=1$. Therefore $|\varphi|_{V_\kappa}<\infty$ for all $\varphi\in C^\infty$, that is,
\[
C^\infty\subset \dot V_\kappa . 
\]
Hence equality \eqref{equ3.10} is valid for every $f\in C^\infty$.

Now let first $q<\infty$, hence, $C^\infty$ is dense in $L_q$. Then \eqref{equ3.10} is true  also for all $f\in L_q$ and this implies $g=0$, as required.

Similarly, we derive that $g=0$ for $q=\infty$ using the following
\begin{Lm}\label{lem3.2}
For every $f\in L_\infty$ there exists a sequence $\{f_n\}\subset C^\infty$ such that 
\begin{equation}\label{equ3.12}
\lim_{i\rightarrow\infty}\int_{Q^d}(f-f_n)g\, dx=0\quad {\rm for\ all}\quad g\in L_1.
\end{equation}
\end{Lm}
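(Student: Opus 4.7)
The plan is to reuse the regularization construction from the proof of Theorem \ref{teo2.12}. Given $f\in L_\infty$, extend $f$ by zero to $f^0$ on $\RR^d$ and define $f_n^0$ and $f_n:=f_n^0\circ a_n$ exactly by formulas \eqref{eq5.7} and \eqref{eq5.9}. Then $f_n\in C^\infty$, and because $\varphi\ge 0$ with $\int\varphi\,dy=1$, Jensen (or Young's) inequality yields the crucial uniform bound
\[
\|f_n\|_\infty\le\|f\|_\infty\quad\text{for all}\ n\in\N.
\]
This replaces the role played in Subsection~3.3 by inequality \eqref{e3.5}: the latter required $f\in\dot V_\kappa$, whereas the present bound is immediate from the convolution structure.

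Next I would reduce the verification of \eqref{equ3.12} to test functions $g\in C_0^\infty(Q^d)$. Since $C_0^\infty(Q^d)$ is dense in $L_1$ and $\{f-f_n\}$ is uniformly bounded in $L_\infty$ by $2\|f\|_\infty$, for arbitrary $g\in L_1$ and $\varepsilon>0$ one can pick $\tilde g\in C_0^\infty(Q^d)$ with $\|g-\tilde g\|_1<\varepsilon$ and estimate
\[
\left|\int_{Q^d}(f-f_n)g\,dx\right|\le 2\|f\|_\infty\,\varepsilon+\left|\int_{Q^d}(f-f_n)\tilde g\,dx\right|.
\]
So it suffices to show $\int f_n\tilde g\,dx\to\int f\tilde g\,dx$ for every $\tilde g\in C_0^\infty(Q^d)$.

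For such $\tilde g$ I would run the change-of-variables calculation displayed in \eqref{e3.21}: Fubini together with the substitution $z:=a_n(x)-(n+1)^{-1}y$ transforms $\int_{Q^d}f_n\tilde g\,dx$ into $\int_{Q^d}f\,\hat{\tilde g}_n\,dx$, where
\[
\hat{\tilde g}_n(x):=\lambda_n^{-d}\int_{\|y\|_\infty\le 1}\tilde g\!\left(a_n^*(x)+\tfrac{y}{n}\right)\varphi(y)\,dy.
\]
(The fact that $\mathrm{supp}\,f^0=Q^d$ lets one ignore the enlarged cube $\hat Q_n^d(y)$, exactly as in \eqref{e3.21}.) Because $\tilde g\in C_0^\infty(Q^d)$, the mean value theorem applied to $\tilde g$, combined with $\|a_n^*(x)-x\|_\infty\to 0$ uniformly in $x\in Q^d$ and $\lambda_n^{-d}\to 1$, gives $\hat{\tilde g}_n\to \tilde g$ uniformly. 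Since $f\in L_\infty\subset L_1$, passing to the limit yields $\int f\,\hat{\tilde g}_n\,dx\to\int f\tilde g\,dx$, as required.

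I do not anticipate a real obstacle here: the only place the proof of Theorem \ref{teo2.12}(b) for $q=\infty$ used membership in $\dot V_\kappa$ was to obtain $\sup_n\|f_n\|_\infty<\infty$, and in the present setting this is automatic. The argument is essentially the classical weak$^*$ approximation of $L_\infty$ functions by mollifications, adapted to the contracted cubes $a_n(Q^d)$ so that $f_n$ is genuinely smooth on $Q^d$.
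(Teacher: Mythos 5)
Your proof is correct, but it takes a genuinely different route from the paper's. The paper's proof of this lemma is shorter and more direct: it chooses the mollifying kernel $\varphi$ to be \emph{even} (supported by the Euclidean unit ball rather than the $\ell_\infty$ cube), extends both $f$ and $g$ by zero to $\RR^d$, and then observes by Fubini that
\[
\int_{\RR^d}(f_0-f_\varepsilon)\,g_0\,dx=\int_{\RR^d}f_0\,(g_0-g_\varepsilon)\,dx,
\]
so that the assertion follows in one line from $\|g_0-g_\varepsilon\|_1\to 0$, with no need to reduce to test functions in $C_0^\infty(Q^d)$ and no use of the contraction $a_n$. In contrast, you reuse the specific sequence $f_n=f_n^0\circ a_n$ from \eqref{eq5.7}--\eqref{eq5.9}, establish the uniform bound $\|f_n\|_\infty\le\|f\|_\infty$ (a valid replacement for \eqref{e3.5}, which was the only place the $\dot V_\kappa$ hypothesis entered), reduce to $\tilde g\in C_0^\infty(Q^d)$ by density and uniform boundedness, and run the change-of-variables computation from \eqref{e3.21} together with the uniform convergence $\hat{\tilde g}_n\to\tilde g$. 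Both arguments are correct. The paper's trick is slicker and requires no density step; yours has the minor advantage of producing literally the same approximating sequence that appears throughout Section~3, at the cost of being longer and of needing the extra reduction to smooth compactly supported test functions.
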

\begin{proof}
Let $\varphi$ be a nonnegative even $C^\infty$ function on $\RR^d$ supported by the unit Euclidean ball and the $L_1$-norm $1$. Extending $f,g$ by zero to $\RR^d$ we define $f_\varepsilon, g_\varepsilon$, $\varepsilon>0$, to be convolutions of the extensions (denoted by $f_0, g_0$) with the function $\varphi_\varepsilon : x\mapsto
\varepsilon^{-d} \varphi(\frac x \varepsilon)$, $x\in\RR^d$.
Then  $f_\varepsilon, g_\varepsilon\in C^\infty(\RR^d)$ and $\|g_0-g_\varepsilon\|_1\rightarrow 0$ as $\varepsilon\rightarrow 0$ for every $g\in L_1$. Moreover,
\[
\int_{\RR^d}(f_0-f_\varepsilon)g_0\, dx=\int_{\RR^d} f_0(g_0-g_\varepsilon)\, dx
\]
for all $f\in L_\infty$ and $g\in L_1$. The absolute value of the right-hand side is bounded from above by $\|f\|_\infty\, \|g_0-g_\varepsilon\|\rightarrow 0$ as $\varepsilon\rightarrow 0$. This clearly implies \eqref{equ3.12}.
\end{proof}

\begin{R}\label{rem3.3}
{\rm Since $L_1^*=L_\infty$, this lemma in other terms means that the set $C^\infty$ is dense in $L_\infty$ in the weak$^*$ topology.
}
\end{R}
The proof of part (c) of the theorem is complete.

%

\sect{Proof of Theorem \ref{te1.11}}
We prove that under the conditions
\begin{equation}
1\le p:=p(\kappa)\le\infty,\quad 1<q:=q(\kappa)\le\infty,\quad s:=s(\kappa)\le k,
\end{equation}
the dual to the {\em Banach space} $U_\kappa$,  see Theorem \ref{prop1.4}, is isometrically isomorphic to the Banach space $V_\kappa$. 

Let us recall that the space 
\[
V_\kappa=\dot V_\kappa/\mathcal P_{k-1}^d,
\]
see Definition \ref{def1.1}, i.e., its elements are factor-classes $\{f\}+\mathcal P_{k-1}^d$, where {\em functions} $f\in L_q$ satisfy
\[
|f|_{V_\kappa}:=\sup_{\pi\in\Pi(Q^d)}\left\{\sum_{Q\in\pi}\left(|Q|^{-\lambda}E_{kq}(f;Q)\right)^p\right\}^{\frac 1 p}<\infty.
\]
Since $\mathcal P_{k-1}^d$ is the null space of $\dot V_\kappa$, the norm of a class $\hat f\in V_\kappa$ satisfies
\[
\|\hat f\|_{V_\kappa}:=\inf\{|g|_{V_\kappa}\, :\, g\in\hat f\}=|g|_{V_\kappa}
\]
for every $g\in\hat f$.

\noindent Moreover, $\int_{Q^d} f h\, dx=\int_{Q^d} g h\, dx$ for functions $f,g$ of the same class and every $h\in U_\kappa^0$, since $\mathcal P_{k-1}^d$ is orthogonal to $U_\kappa^0$.
By this reason we will use in the forthcoming proof functions in $\dot V_\kappa$ instead of their related classes in $V_\kappa$.

Now we prove the isometry $(U_\kappa)^*\equiv V_\kappa$.

Due to \eqref{equ3.6} and Remark \ref{rem3.1}, the natural embedding $E:\hat L_{q'}\hookrightarrow U_\kappa$ is of norm $\le 1$ and has dense image. Passing to the conjugate map we obtain that 
\[
E^*: U_\kappa^*\hookrightarrow \hat L_{q'}^*\equiv L_q/\mathcal P_{k-1}^d
\] 
is a linear injection of norm $\le 1$.
On the other hand, $V_\kappa$ is contained in $L_q/\mathcal P_{k-1}^d$. Let us check that ${\rm range}(E^*)$ is in $V_\kappa$ and  that the linear map $E^*:U_\kappa^*\rightarrow V_\kappa$ is of norm $\le 1$.

To this end, for $\ell\in U_\kappa^*$ we denote by $f_\ell\in L_q$ an element whose image in $L_q/\mathcal P_{k-1}^d$ coincides with $E^*(\ell)$. Then we take for every $Q\subset Q^d$ a $\kappa$-atom denoted by $\hat a_Q$ such that
\begin{equation}\label{eq4.5}
\int_Q f_\ell\hat a_Q\, dx=|Q|^{-\lambda} E_{kq}(f_\ell;Q);
\end{equation}
its existence directly follows from Lemma \ref{le3.2} and the definition of $\kappa$-atoms. 

Then for a $\kappa$-chain $\hat b_\pi$ given by $\hat b_\pi:=\sum_{Q\in\pi}c_Q\hat a_Q$ 
we get from \eqref{eq4.5}
\[
[E^*(\ell)](\hat b_\pi)=\int_{Q^d}\hat b_\pi f_\ell\, dx=\sum_{Q\in\pi}c_Q|Q|^{-\lambda} E_{kq}(f_\ell;Q).
\]
This, in turn, implies
\[
\sum_{Q\in\pi} c_Q |Q|^{-\lambda} E_{kq}(f_\ell;Q)\le \|E^*(\ell)\|_{\hat L_{q'}}\|\hat b_\pi\|_{U_\kappa}\le \|\ell\|_{U_\kappa^*}\|\hat b_\pi\|_{U_\kappa}\le \left(\sum_{Q\in\pi}|c_Q|^{p'}\right)^{\frac{1}{p'}}\|\ell\|_{U_\kappa^*}.
\]
Taking here supremum over all $(c_Q)_{Q\in\pi}$ of the $\ell_{p'}(\pi)$ norm $1$ and then supremum over all $\pi$ we conclude that
\[
|f_\ell|_{V_\kappa}:=\sup_{\pi}\left(\sum_{Q\in\pi}\left(\frac{E_{kq}(f_\ell;Q)}{|Q|^\lambda}\right)^p\right)^{\frac 1p}\le \|\ell\|_{U_\kappa^*}.
\]
Hence, $E^*(\ell)\in V_\kappa$ for every $\ell\in U_\kappa^*$ and $E^*:U_\kappa^*\hookrightarrow V_\kappa$ is a linear injection of norm $\le 1$.

Next, let us show that there is a linear injection of norm $\le 1$
\begin{equation}\label{eq4.6}
F:V_\kappa\hookrightarrow U_\kappa^*
\end{equation}
such that
\begin{equation}\label{eq4.7}
FE^*={\rm id}|_{U_\kappa^*}.
\end{equation}

Actually, let $f\in \dot V_\kappa$ and $\ell_f: U_\kappa^0\rightarrow\RR$ be a linear functional given for $g\in U_\kappa^0$ by
\begin{equation}\label{eq4.8}
\ell_f(g):=\int_{Q^d}fg\, dx.
\end{equation}
Due to \eqref{equ3.9}
\[
|\ell_f(g)|\le |f|_{V_\kappa}\|g\|_{U_\kappa}.
\]
Thus, $\ell_f$ continuously extends to a linear functional  from $U_\kappa^*$ (denoted by the same symbol) and the linear map $F:V_\kappa\rightarrow U_\kappa^*$,  $\{f\}+\mathcal P_{k-1}^d\mapsto \ell_f$, is of norm $\le 1$. Moreover, $F$  is an injection. Indeed, let $\ell_f=0$ for some $f\in \dot V_\kappa$. Since $U_\kappa^0=\hat L_{q'}$ and $\dot V_\kappa\subset L_q$, equality \eqref{eq4.8} implies that $\ell_f|_{U_\kappa^0}$ determines the trivial functional on $\hat L_{q'}^*\equiv L_q/\mathcal P_{k-1}^d$. Hence, $f\in  \mathcal P_{k-1}^d$, i.e., $f$ determines the zero element of $V_\kappa$, as required.

Further, by the definitions of $E^*$ and $F$ we have for each $h\in U_\kappa^*$ and $g\in U_\kappa^0$ 
\[
[FE^*(h)](g)=\ell_{E^*(h)}(g)=\int_{Q^d}E^*(h)g\, dx=h(E(g))=h(g).
\]
The proof of \eqref{eq4.6} and \eqref{eq4.7} is complete. 

In turn, the established results mean that ${\rm range}(E^*)=V_\kappa$, ${\rm range}(F)=U_\kappa^*$ and $F$ and $E^*$ are isometries.

Theorem \ref{te1.11} is proved.


\sect{Proof of Theorem \ref{teo1.20}}
Let $p,q,\lambda\in\kappa$ and $s(\kappa)$ be such that 
\begin{equation}\label{eq6.1}
1<p\le\infty,\quad 1<q<\infty\quad {\rm and}\quad s(\kappa)<k.
\end{equation}
We prove that under these assumptions the {\em Banach spaces $ \textsc{v}_\kappa^{*}$ and $U_\kappa$ are isometrically isomorphic.} Along with Theorem \ref{te1.11} this directly implies the {\em two stars theorem} asserting that
$ \textsc{v}_\kappa^{**}$ and $V_\kappa$ are isometrically isomorphic (see Corollary \ref{cor2.8}).

The proof of the theorem is based on main results of Subsections 6.1 and 6.2:   Propositions \ref{prop6.1}, \ref{prop6.4} and Lemma \ref{lem6.2}. Subsection 6.3 contains the concluding part of the proof. 

\subsection{} In the subsequent text we identify $U_\kappa$ with its image under the natural embedding $U_\kappa\hookrightarrow U_\kappa^{**}$. 
Moreover, identifying $V_\kappa$ and $U_\kappa^*$, see Theorem \ref{te1.11}, we regard $U_\kappa$ as a linear subspace of $V_\kappa^*\, (=(U_\kappa^*)^*)$.

Further, $\mathfrak i:\textsc{v}_\kappa\hookrightarrow V_\kappa$ is the natural embedding, cf. \eqref{equ1.7}, and $\mathfrak i^*:V_\kappa^*\rightarrow\textsc{v}_\kappa^*$ is its adjoint. 
\begin{Prop}\label{prop6.1}
(a) $\mathfrak i^*: V_\kappa^*\rightarrow \textsc{v}_\kappa^*$ is a surjective linear map of norm one such that
$\mathfrak i^*|_{U_\kappa}:U_\kappa\rightarrow \textsc{v}_\kappa^*$ is an isometry.\smallskip

\noindent (b) The image $\mathfrak i^*(B(U_\kappa))$ of the closed unit ball of $U_\kappa$ is a dense subset of the closed unit ball $B(\textsc{v}_\kappa^*)$ in the weak$^*$ topology of $\textsc{v}_\kappa^*$. 
\end{Prop}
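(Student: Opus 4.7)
The plan relies on the identification $V_\kappa\equiv U_\kappa^*$ from Theorem~\ref{te1.11}, under which $V_\kappa^*=U_\kappa^{**}$ and $U_\kappa$ sits isometrically inside $V_\kappa^*$ via the canonical evaluation. On $U_\kappa^0$ this evaluation is simply the integral pairing $u\mapsto\bigl(f\mapsto\int_{Q^d}fu\,dx\bigr)$, and $\mathfrak{i}^*$ becomes the restriction-of-functionals map $\phi\mapsto\phi|_{\textsc{v}_\kappa}$. With this dictionary fixed, both parts become manageable.

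For part (a), surjectivity and the bound $\|\mathfrak{i}^*\|=1$ follow from Hahn--Banach: every $\psi\in\textsc{v}_\kappa^*$ has a norm-preserving extension $\widetilde\psi\in V_\kappa^*$ with $\mathfrak{i}^*(\widetilde\psi)=\psi$, and $\|\mathfrak{i}^*\|\le\|\mathfrak{i}\|=1$. For the isometry claim the inequality $\|\mathfrak{i}^*(u)\|_{\textsc{v}_\kappa^*}\le\|u\|_{U_\kappa}$ is automatic; the substantive direction is the reverse. I would prove it first on the dense subspace $U_\kappa^0\subset U_\kappa$ and then extend by continuity using the Lipschitz bound just noted. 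Fix $u\in U_\kappa^0$ and $\varepsilon>0$, and choose $f\in V_\kappa$ with $\|f\|_{V_\kappa}\le 1$ and $\bigl|\int fu\,dx\bigr|\ge\|u\|_{V_\kappa^*}-\varepsilon=\|u\|_{U_\kappa}-\varepsilon$. Apply Theorem~\ref{teo2.12} to produce $\{f_n\}\subset C^\infty$ with $|f_n|_{V_\kappa}\to|f|_{V_\kappa}$ and, since $u\in L_{q'}$, with $\int f_n u\,dx\to\int fu\,dx$ --- $L_q$-convergence delivers this when $q<\infty$ and the weak$^*$ convergence against $L_1$ functions in Theorem~\ref{teo2.12}(b) handles $q=\infty$. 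Each class $[f_n]$ lies in $\textsc{v}_\kappa$, so testing $\mathfrak{i}^*(u)$ against $f_n/\max(|f_n|_{V_\kappa},1)$ and passing to the limit gives $\|\mathfrak{i}^*(u)\|_{\textsc{v}_\kappa^*}\ge\|u\|_{U_\kappa}-\varepsilon$. Letting $\varepsilon\to 0$ and extending to $U_\kappa$ by density finishes (a).

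For part (b), my scheme is Goldstine plus weak$^*$-continuity of $\mathfrak{i}^*$. Goldstine's theorem applied to the Banach space $U_\kappa$ says that $B(U_\kappa)$ is dense in $B(U_\kappa^{**})=B(V_\kappa^*)$ in the $\sigma(V_\kappa^*,V_\kappa)$ topology. The map $\mathfrak{i}^*$ is continuous from $\sigma(V_\kappa^*,V_\kappa)$ to $\sigma(\textsc{v}_\kappa^*,\textsc{v}_\kappa)$: if $\phi_\alpha\to\phi$ weak$^*$ in $V_\kappa^*$, then pairings against any $f\in\textsc{v}_\kappa\subset V_\kappa$ converge, which is exactly $\mathfrak{i}^*(\phi_\alpha)\to\mathfrak{i}^*(\phi)$ weak$^*$ in $\textsc{v}_\kappa^*$. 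Since continuous images of dense subsets are dense in the image, $\mathfrak{i}^*(B(U_\kappa))$ is weak$^*$-dense in $\mathfrak{i}^*(B(V_\kappa^*))$, and the latter equals $B(\textsc{v}_\kappa^*)$ by the same Hahn--Banach extension observation used in (a).

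The main obstacle I expect is the duality bookkeeping in (a): the integral $\int fu\,dx$ is literally defined only for $u\in U_\kappa^0$ and $f\in L_q$, so one must verify that this pairing agrees with the abstract dual pairing $\langle u,f\rangle$ under the chain of identifications $V_\kappa=U_\kappa^*$ and $U_\kappa\hookrightarrow V_\kappa^*$, and that the approximation convergences of Theorem~\ref{teo2.12} translate cleanly into convergence of these pairings --- especially in the delicate $q=\infty$ case, where only the weak$^*$ form of convergence is available. Once the identity $\langle u,f\rangle=\int fu\,dx$ is pinned down on $U_\kappa^0\times V_\kappa$ and the canonical embeddings are carefully traced, what remains is soft functional analysis.
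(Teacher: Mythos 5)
Your proof is correct and follows essentially the same route as the paper's: Hahn--Banach for surjectivity and the norm bound, Theorem~\ref{teo2.12} to manufacture $C^\infty$ approximants in $\textsc{v}_\kappa$ that asymptotically norm a given $u\in U_\kappa$, and Goldstine plus weak$^*$-to-weak$^*$ continuity of $\mathfrak i^*$ for part~(b). The only organizational difference is that the paper extracts the approximation step into a standalone Lemma~\ref{lem6.2} (weak$^*$ density of $\textsc{v}_\kappa$ in $V_\kappa=U_\kappa^*$) and tests against all of $U_\kappa$, whereas you inline the approximation argument, work on the dense subspace $U_\kappa^0$ where the integral pairing is literal, and then upgrade to $U_\kappa$ by $1$-Lipschitz continuity of both sides of the isometry identity; these are equivalent in substance, and the ``duality bookkeeping'' concern you flag is exactly resolved by Theorem~\ref{te1.11}'s explicit description of $U_\kappa^*\equiv V_\kappa$ as the integral pairing.
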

\begin{proof}
(a) We need the following 
\begin{Lm}\label{lem6.2}
The subspace $\textsc{v}_\kappa$ is weak$^*$ dense in the space $V_\kappa\, (=U_\kappa^*)$.
\end{Lm}
\begin{proof}
It suffices for each $f\in V_\kappa$ to find a bounded sequence  $\{f_n\}_{n\in\N}\subset \textsc{v}_\kappa$ such that
\begin{equation}\label{eq6.3}
\lim_{n\rightarrow\infty} (f-f_n)(u)=0\quad {\rm for\ all}\quad u\in U_\kappa .
\end{equation}

Let $\tilde f \in\dot V_\kappa$ be such that $f=\{\tilde f\}+\mathcal P_{k-1}^d\in V_\kappa\, (:=\dot V_\kappa/\mathcal P_{k-1}^d)$.
We choose $\tilde f_n\in C^\infty\subset \dot V_\kappa$ to be the approximation of $\tilde f$ given by Theorem  \ref{teo2.12} and then define 
\[
f_n:=\{\tilde f_n\}+\mathcal P_{k-1}^d\in \textsc{v}_\kappa.
\] 

Applying to $\{\tilde f_n\}_{n\in\N}$ and $\tilde f\in V_\kappa$ inequality \eqref{e2.16} we obtain
 \begin{equation}\label{equat6.5}
\lim_{n\rightarrow\infty}|\tilde f_n|_{V_\kappa}=|\tilde f|_{V_\kappa}.
\end{equation}

This gives boundedness of the sequence $\{f_n\}_{n\in\N}$ in $\textsc{v}_\kappa$.

Since  $U_\kappa^0$ is dense in  $U_\kappa$, the latter implies that it suffices to prove \eqref{eq6.3} for $u$ being a $\kappa$-atom, say $a_Q$.

In this case, we have for any polynomial $m\in\mathcal P_{k-1}^d$
\begin{equation}\label{eq6.4}
(f-f_n)(a_Q)=\int_Q (\tilde f-\tilde f_n) a_Q\, dx=\int_Q (\tilde f-\tilde f_n-m) a_Q\, dx.
\end{equation}
We choose $m$ here such that
 \[
E_{kq}(\tilde f-\tilde f_n;Q)=\|\tilde f-\tilde f_n-m\|_{L_q(Q)}
\]
and estimate the integral in \eqref{eq6.4} by the H\"{o}lder inequality. This gives
 \[
\left|\int_Q (\tilde f-\tilde f_n) a_Q\, dx\right|\le |Q|^{-\lambda} E_{kq}(\tilde f-\tilde f_n;Q)\le |Q|^{-\lambda}\|\tilde f-\tilde f_n\|_q \rightarrow 0\quad {\rm as}\quad n\rightarrow\infty
\]
by Theorem \ref{teo2.12}\,(b).

Hence, we conclude that for the  sequence  $\{\tilde f_n\}_{n\in\N}\subset C^\infty$ and every $\kappa$-atom $a_Q$
\[
(f-f_n)(a_Q)=\int_Q ( \tilde f- \tilde f_n) a_Q\, dx\rightarrow 0\quad {\rm as}\quad n\rightarrow\infty .
\]

This completes the proof of the  lemma.
\end{proof}
Now we finish the proof of assertion (a).

By definition, $\mathfrak i^*$ maps $V_\kappa^*$ linearly to $\textsc{v}_\kappa^*$ by
\begin{equation}\label{eq6.5}
\mathfrak i^*(f^*):=f^*|_{\textsc{v}_\kappa},\quad f^*\in V_\kappa^*.
\end{equation}
Moreover, every $f^*\in \textsc{v}_\kappa$ by the Hahn-Banach theorem is extended to some element of $V_\kappa^*$ with the same norm. Hence,
$\mathfrak i^*$ is a linear surjection of norm one.

To prove that $\mathfrak i^*|_{U_\kappa}$ is an isometry, we have to show that $\|\mathfrak i^*(v)\|_{\textsc{v}_\kappa^*}=\|v\|_{U_\kappa}$ for all $v\in U_\kappa$.

In fact, let $u\in U_\kappa\setminus\{0\}$. By the Hahn-Banach theorem there exists $f\in V_\kappa$ such that $\|f\|_{V_\kappa}=1$ and $f(u)=\|u\|_{U_\kappa}$.  By Lemma \ref{lem6.2} and \eqref{equat6.5} there exists a sequence $\{f_n\}_{n\in\N}\subset \textsc{v}_\kappa$ weak$^*$ converging to $f$ such that
\[
\lim_{n\to\infty}\|f_n\|_{V_\kappa}=\|f\|_{V_\kappa}=1.
\]
These imply that
\[
\|v\|_{U_\kappa}=|f(v)|=\lim_{n\to\infty}|f_n(v)|\le\sup_{g\in B(\textsc{v}_\kappa)}|g(v)|\le \sup_{h\in B(V_\kappa)}|f(v)|\le \|v\|_{U_\kappa}.
\]
Hence,
\[
\|v\|_{U_\kappa}=\sup_{g\in B(\textsc{v}_\kappa)}|g(v)|=\sup_{g\in B(\textsc{v}_\kappa)}|(\mathfrak i^*(v))(g)|:=
\|\mathfrak i^*(v)\|_{\textsc{v}_\kappa^*},
\]
as required.

This proves that $\mathfrak i^*|_{U_\kappa}$ is an isometry and completes the proof of assertion (a) of the proposition.\smallskip

\noindent (b) By the Goldstine theorem, see, e.g., \cite[Thm.\,5.5.1]{DSch-58},
$B(U_\kappa)$ is weak$^*$ dense in $B(U_\kappa^{**})\, (=B(V_\kappa^*))$.  Since $\mathfrak i^*$ is a bounded surjective linear map of norm one, $\mathfrak i^*(B(V_\kappa^*))$ coincides with the closed unit ball $B(\textsc{v}_\kappa^*)$ of $\textsc{v}_\kappa^*$. Moreover, $\mathfrak i^*$ is weak$^*$ continuous; hence, density of $B(U_\kappa)$ in $B(V_\kappa^*)$ implies that the weak$^*$ closure of $\mathfrak i^*(B(U_\kappa))$ coincides with $B(\textsc{v}_\kappa^*)$.

Proposition \ref{prop6.1} is proved.
\end{proof}

\subsection{} In the next result, $\mathcal A_\kappa$ denotes the set of $\kappa$-atoms and $\bar{\mathcal B}_\kappa$  the closure in $U_\kappa$ of the set of $\kappa$-chains
$\mathcal B_\kappa:=\{b_\pi\in U_\kappa^0\, :\, [b_\pi]_{p'}\le 1\}$, see Definitions \ref{def2.1} and \ref{def2.2}.
\begin{Prop}\label{prop6.4}
(a) If $1<p:=p(\kappa)<\infty$, then $\mathfrak i^*(\bar{\mathcal B}_\kappa)$ is a subset of $B(\textsc{v}_\kappa^*)$ compact
in the weak$^*$ topology of $\textsc{v}_\kappa^*$.\smallskip

\noindent (b) If $1\le p:=p(\kappa)\le\infty$, then the same is true for the set $\mathfrak i^*({\mathcal A}_\kappa)$.
\end{Prop}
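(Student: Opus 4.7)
Plan: Since $\textsc{v}_\kappa$ is separable---polynomials with rational coefficients are dense in $C^\infty\cap\dot V_\kappa$ via estimate \eqref{equat3.11}---Banach--Alaoglu gives that $B(\textsc{v}_\kappa^*)$ is weak*-compact and weak*-metrizable, so weak*-compactness inside it is equivalent to weak*-sequential closedness. By Proposition \ref{prop6.1}\,(a) the restriction $\mathfrak i^*|_{U_\kappa}$ is an isometry, and the bounds $\|a_Q\|_{U_\kappa}\le 1$, $\|b_\pi\|_{U_\kappa}\le[b_\pi]_{p'}\le 1$ place both $\mathfrak i^*(\mathcal A_\kappa)$ and $\mathfrak i^*(\bar{\mathcal B}_\kappa)$ inside $B(\textsc{v}_\kappa^*)$. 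Thus in each part it suffices to extract from an arbitrary sequence in the image a subsequence weak*-converging to an element of the same image.

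For (b), given $\{a_{Q_n}\}\subset\mathcal A_\kappa$, compactness of the closed subcubes of $Q^d$ in the Hausdorff metric lets me pass to $Q_n\to Q_*$. If $|Q_*|=0$, for any $g\in C^\infty$ the H\"older duality bound combined with the Taylor estimate and the identity $k/d+1/q-\lambda=(k-s(\kappa))/d+1/p$ yield
\[
|\mathfrak i^*(a_{Q_n})(g)|\le|Q_n|^{-\lambda}E_{kq}(g;Q_n)\le c(g)\,|Q_n|^{(k-s(\kappa))/d+1/p}\to 0
\]
using $s(\kappa)<k$; $C^\infty$-density in $\textsc{v}_\kappa$ and $\|a_{Q_n}\|_{U_\kappa}\le 1$ extend this to all $f\in\textsc{v}_\kappa$, giving $\mathfrak i^*(a_{Q_n})\to 0=\mathfrak i^*(0)\in\mathfrak i^*(\mathcal A_\kappa)$. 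If $|Q_*|>0$, reflexivity of $L_{q'}$ (from $1<q<\infty$) yields a subsequence $a_{Q_n}\rightharpoonup a_*$ weakly in $L_{q'}$; the three atomic conditions---support in $Q_*$, $\|a_*\|_{q'}\le|Q_*|^{-\lambda}$, orthogonality to $\mathcal P_{k-1}^d$---pass to the weak $L_{q'}$-limit using respectively Hausdorff convergence of $Q_n$, lower semicontinuity of the norm together with $|Q_n|\to|Q_*|$, and continuity of polynomial pairings. So $a_*\in\mathcal A_\kappa$, and since $\textsc{v}_\kappa\subset L_q/\mathcal P_{k-1}^d$ this yields $\mathfrak i^*(a_{Q_n})\to\mathfrak i^*(a_*)$ weak*.

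For (a), a density approximation reduces to $v_n=b_{\pi_n}=\sum_{Q\in\pi_n}c_Q^{(n)}a_Q^{(n)}\in\mathcal B_\kappa$. For each $m\in\N$ split $\pi_n=\pi_n^{(m,\ge)}\sqcup\pi_n^{(m,<)}$ by $|Q|\ge m^{-d}$. On the small-cube side, H\"older in the $(p',p)$-duality of coefficients and local approximations, combined with the Taylor estimate and the hypotheses $1<p<\infty$, $s(\kappa)<k$, gives
\[
|\mathfrak i^*(b_{\pi_n^{(m,<)}})(g)|\le c(g)\,m^{-(k-s(\kappa))}
\]
for every $g\in C^\infty$; this extends via $C^\infty$-density and $\|b_{\pi_n^{(m,<)}}\|_{U_\kappa}\le 1$ to every $f\in\textsc{v}_\kappa$, uniformly in $n$, as $m\to\infty$. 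On the large-cube side at most $m^d$ cubes are involved, so a Cantor-diagonal extraction over all $m$ simultaneously---applying (b) cube-by-cube to get Hausdorff limits of the cubes, numerical limits of the bounded coefficients, and weak $L_{q'}$-limits of the atoms---produces for each $m$ a chain $u_m\in\mathcal B_\kappa$ with $[u_m]_{p'}\le 1$ (Fatou) and $\mathfrak i^*(b_{\pi_n^{(m,\ge)}})\to\mathfrak i^*(u_m)$ weak*. Since any cube of size $\ge m^{-d}$ persists with the same coefficient in $\pi_n^{(m',\ge)}$ for every $m'>m$, the $u_m$'s nest: $u_{m'}-u_m$ lives on cubes of size in $[m'^{-d},m^{-d})$ with coefficient supports disjoint from $u_m$, so
\[
[u_{m'}-u_m]_{p'}^{p'}=[u_{m'}]_{p'}^{p'}-[u_m]_{p'}^{p'}.
\]
Monotonicity of $[u_m]_{p'}^{p'}$ together with the bound $\le 1$ forces $[u_{m'}-u_m]_{p'}\to 0$, hence $\|u_{m'}-u_m\|_{U_\kappa}\le[u_{m'}-u_m]_{p'}\to 0$; so $\{u_m\}$ is a $U_\kappa$-Cauchy sequence with limit $v\in\bar{\mathcal B}_\kappa$, and combining with the smallness of the small-cube part gives $\mathfrak i^*(b_{\pi_n})\to\mathfrak i^*(v)$ weak* along the diagonal.

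The main obstacle is organizing the Cantor-diagonal so that the cube labellings are consistent across all thresholds $m$---each cube of size $\ge m^{-d}$ must be tracked as the same object in every $\pi_n^{(m',\ge)}$ with $m'>m$---which is what produces the crucial nesting $u_m\subset u_{m'}$. This step converts weak*-convergence into a Cauchy property in $U_\kappa$ through the $\ell_{p'}$-monotonicity argument, and is precisely where the hypothesis $1<p<\infty$ is essential (finiteness of $p'$ making $[\cdot]_{p'}^{p'}$ additive on disjoint supports).
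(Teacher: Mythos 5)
Your reduction to weak$^*$ sequential closedness via metrizability, and your treatment of part (b) (Hausdorff limit of cubes, weak $L_{q'}$-limits of atoms, the three atomic conditions passing to the limit, and the vanishing estimate $|Q_n|^{(k-s(\kappa))/d+1/p}\to 0$ when $|Q_*|=0$) match the paper almost verbatim; that part is correct. For part (a), the underlying compactness tools are the same as in the paper (Hausdorff compactness of cubes, weak compactness in $L_{q'}$ and $\ell_{p'}$, Cantor diagonal), but you organize the passage to the limit differently. The paper orders the cubes in each chain by decreasing volume, passes to Hausdorff limits $Q_i^n\to Q_i$ index by index, defines $v_N:=\sum_{i\le N}c_ia_{Q_i}$ where $N$ is the last index with $|Q_i|>0$, and then proves $\mathfrak i^*(b^n)(f)\to\mathfrak i^*(v_N)(f)$ by a direct $\varepsilon$-split into ``small'' and ``large'' cubes. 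You instead build threshold-restricted approximants $u_m$ (large cubes only, threshold $m^{-d}$), prove they form a $U_\kappa$-Cauchy sequence through the $\ell_{p'}^{p'}$-additivity on disjoint packings, and then assemble the limit $v=\lim u_m\in\bar{\mathcal B}_\kappa$. This is a pleasant structural reformulation: your Cauchy argument for $\{u_m\}$ is essentially the same monotone-tail computation the paper uses to justify that $v_\infty$ lies in $\bar{\mathcal B}_\kappa$, repackaged so the smallness of tails is forced by monotone boundedness of $[u_m]_{p'}^{p'}$ rather than by the weak $\ell_{p'}$-limit directly.

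The one genuine gap is exactly the one you flag as ``the main obstacle'' and then do not close: the nesting $u_m\subset u_{m'}$ requires that, along the diagonal subsequence, each index $i$ lands on a fixed side of the threshold $|Q_i^n|\gtrless m^{-d}$ for all large $n$, \emph{simultaneously} for all $m$; otherwise a single cube can flicker in and out of $\pi_n^{(m,\ge)}$ along the subsequence, and the coefficient of $a_{Q_i}$ in $u_m$ is not a well-defined limit, so the disjoint-support additivity $[u_{m'}]_{p'}^{p'}=[u_m]_{p'}^{p'}+[u_{m'}-u_m]_{p'}^{p'}$ can fail. The standard fix is available but must be stated: either replace the dyadic thresholds $m^{-d}$ by a decreasing sequence $t_m\to 0$ chosen to avoid the (at most countable) set of limiting volumes $\{|Q_i|:|Q_i|>0\}$, or simply adopt the paper's device of ordering each packing $\pi_n$ so that $|Q_{i+1}^n|\le|Q_i^n|$, which makes the index of the ``cutoff'' cube monotone in $m$ and renders the flicker impossible after one further diagonal extraction. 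Without one of these the nesting claim, and hence the Cauchy step, is unsupported. Everything else in your argument goes through once this is repaired, and it does yield the same conclusion as the paper's Statement~\ref{stat6.5}.
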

\begin{proof}
{\bf (a)} Since for $b_\pi\in\mathcal B_\kappa$
\[
\|b_\pi\|_{U_\kappa^0}\le [b_\pi]_{p'}\le 1,
\]
$\bar{\mathcal B}_\kappa\subset B(U_\kappa)$. Since $\mathfrak i^*$ maps $B(U_\kappa)$ in  $B(\textsc{v}_\kappa^*)$, cf. Proposition \ref{prop6.1}\,(b), this gives the embedding
\begin{equation}\label{eq6.8}
\mathfrak i^*(\bar{\mathcal B}_\kappa)\subset B(\textsc{v}_\kappa^*).
\end{equation}

Further, by the Banach-Alaoglu theorem $B(\textsc{v}_\kappa^*)$ is compact in the weak$^*$ topology of $\textsc{v}_\kappa^*$. Moreover, by separability of $\textsc{v}_\kappa$ the ball $B(\textsc{v}_\kappa^*)$ equipped with this topology is metrizable. 
Hence, to establish assertion (a) it suffices to prove that the limit of every sequence of $\mathfrak i^*(\bar{\mathcal B}_\kappa)$ converging in the weak$^*$ topology of $B(\textsc{v}_\kappa^*)$ belongs to $\mathfrak i^*(\bar{\mathcal B}_\kappa)$. In turn, since $\mathcal B_\kappa$ is dense in $\bar{\mathcal B}_\kappa$ in the norm topology of $U_\kappa\, (\subset V_\kappa^*)$ and 
$\|\mathfrak i^*\|_{V_\kappa^*\to\textsc{v}_\kappa^*}=1$, it suffices to prove this statement for sequences from the set $\mathfrak i^*(\mathcal B_\kappa)$.

Hence, we should prove the following:
\begin{Stat}\label{stat6.5}
If $\{b^n\}_{i\in\N}\subset\mathcal B_\kappa$ is such that the sequence $\{\mathfrak i^*(b^n)\}_{n\in\N}$ weak$^*$ converges in $B(\textsc{v}_\kappa^*)$, then its limit belongs to $\mathfrak i^*(\bar{\mathcal B}_\kappa)$.
\end{Stat}
\begin{proof}
Let $b^n$ has the form
\[
b^n:=\sum_{i=1}^{N(n)} c_i^n a_{Q_i^n},\quad n\in\N,
\]
where $\pi_n:=\{Q_i^n\, :\, 1\le i\le N(n)\}$ is a packing.

Without loss of generality we assume that
\begin{equation}\label{eq6.9}
|Q_{i+1}^n|\le |Q_i^n|,\quad 1\le i < N(n).
\end{equation}
Further, we extend sequences $\pi_n$, $\{c_i^n\}$ and $\{a_{Q_i^n}\}$  by setting
\begin{equation}\label{eq6.10}
Q_i^n:=\{0\},\quad c_i^n:=0,\quad a_{Q_i^n}:=0\quad {\rm for}\quad i>N(n).
\end{equation}
Hence, we write
\begin{equation}\label{eq6.11}
b^n:=\sum_{i=1}^\infty c_i^n a_{Q_i^n},\quad n\in\N.
\end{equation}
Since Statement \ref{stat6.5} suffices to prove for any infinite subsequence of \eqref{eq6.11}, we use several times the Cantor diagonal method to construct in the next lemma a suitable for the consequent proof subsequence.
\begin{Lm}\label{lem6.6}
There is an infinite subsequence $\{b^n\}_{n\in J}$, $J\subset\N$, such that for every $i\in\N$ the following is true:
\begin{itemize}
\item[(a)]
$\{Q_{i}^n\}_{n\in J}$  converges in the Hausdorff metric to a closed subcube of $Q^d$ denoted by $Q_i$; \smallskip
\item[(b)] $\{ \mathfrak i^*(a_{Q_{i}^n})\}_{n\in\N}\subset B(\textsc{v}_\kappa^*)$  converges in the weak$^*$ topology of $B(\textsc{v}_\kappa^*)$;\smallskip
\item[(c)] if the limiting cube $Q_{i}$ has a nonempty interior, then the sequence $\{a_{Q_{i}^n}\}_{n\in J}$ converges in the weak topology of $L_{q'}$ (regarded as the dual space of $L_q$, $q\in (1,\infty)$);\smallskip
\item[(d)] the sequence $\{c^n:=(c_{i}^n)_{i\in\N}\}_{n\in J}$ of vectors from $B(\ell_{p'}(\N))$ converges in the weak topology of $\ell_{p'}(\N)\, (=\ell_{p}(\N)^*, p\in (1,\infty))$ to a vector denoted  by $c\, (\in B(\ell_{p'}(\N)))$.
\end{itemize}
\end{Lm}
\begin{proof}
(a) Parameterizing the set of closed subcubes of $Q^d$ by their centers and radii and using the Bolzano-Weierstrass theorem we conclude that $\{Q_1^n\}_{n\in\N}$ contains a converging in the Hausdorff metric subsequence, say, $\{Q_1^n\}_{n\in J_1}$. In turn, $\{Q_2^n\}_{n\in J_1}$ contains a converging in this metric subsequence, say, $\{Q_2^n\}_{n\in J_2}$, $J_2\subset J_1$, etc. Setting then $n_i:=\min J_i$ and $J^a:=\{n_i\}_{i\in\N}$, we obtain the required subsequence $\{Q_i^n\}_{n\in J^a}$ converging to some closed cube $Q_i\subset Q^d$, $i\in\N$.\smallskip

\noindent (b) Since $\|a_{Q_i^n}\|_{U_\kappa}\le 1$ for all $i,n\in\N$, the sequences $\{\mathfrak i^*(a_{Q_i^n})\}_{n\in J^a}\subset B(\textsc{v}_\kappa^*)$, $i\in\N$, while this ball is compact in the (metrizable) weak$^*$ topology of $\textsc{v}_\kappa^*$. Hence, these sequences contain converging in the weak$^*$ topology subsequences and therefore applying as in (a) the Cantor diagonal process to sequences $\{\mathfrak i^*(a_{Q_i^n})\}_{n\in J^a}\subset B(\textsc{v}_\kappa^*)$, $i\in\N$, we find an infinite subset $J^b\subset J^a$ such that each sequence $\{\mathfrak i^*(a_{Q_i^n})\}_{n\in J^b}$, $i\in\N$, converges in the weak$^*$ topology of $B(\textsc{v}_\kappa^*)$.

Hence, the subsequence $\{b^n\}_{n\in J^b}$ of $\{b^n\}_{n\in\N}$ satisfies conditions (a) and (b) of the lemma.
\smallskip

\noindent (c) Now, let $I\subset\N$ be such that for each $i\in I$,
\[
\lim_{J^b\ni n\to\infty}|Q_i^n|=|Q_i|>0.
\]
Since by the definition of a $\kappa$-atom
\[
\varlimsup_{J^b\ni n\to\infty}\|a_{Q_i^n}\|_{q'}\le\lim_{J^b\ni n\to\infty}|Q_i^n|^{-\lambda}=|Q_i|^{-\lambda}<\infty,
\]
each sequence $\{a_{Q_i^n}\}_{n\in J^b}$, $i\in I$, is bounded in the reflexive (as $1<q<\infty$) space $L_{q'}$. By the Banach-Alaoglu theorem each such a sequence contains a weak converging in $L_{q'}$ subsequence. Applying then the Cantor diagonal process to the family of sequences $\{a_{Q_i^n}\}_{n\in J^b}$, $i\in I$, we find an infinite subset $J^c\subset J^b$ such that all sequences $\{a_{Q_i^n}\}_{n\in J^c}$, $i\in I$, converge in the weak topology of $L_{q'}$.

Thus the subsequence $\{b^n\}_{n\in J^c}$ of $\{b^n\}_{n\in\N}$ satisfies conditions (a)--(c) of the lemma.\smallskip

\noindent (d) Since $\|c^n\|_{p'}:=\|(c_i^n)_{i\in\N}\|_{p'}=[b^n]_{p'}\le 1$, $n\in\N$, see \eqref{eq6.10}, the sequence $\{c^n\}_{n\in J^c}\subset B(\ell_{p'}(\N))$. Moreover, $\ell_{p'}$ is reflexive as $1<p<\infty$ and therefore by the Banach-Alaoglu theorem there exists an infinite subset $J^d\subset J^c$ such that the sequence $\{c^n\}_{n\in J^d}$ weak converges to a vector, say, $c$ in $B(\ell_{p'}(\N))$.

We set $J:=J^d$. Then the subsequence $\{b^n\}_{n\in J}$ of $\{b^n\}_{n\in\N}$ satisfies the required conditions (a)--(d).
\end{proof}

Thus, from now on without loss of generality we assume that the sequence $\{b^n\}\subset\mathcal B_\kappa$ of $\kappa$-chains, see \eqref{eq6.11}, satisfies the assertions of Lemma \ref{lem6.6}. 
In particular, there are closed cubes $Q_i\subset Q^d$, $i\in\N$, such that in the Hausdorff metric
\begin{equation}\label{eq6.12}
Q_i=\lim_{n\to\infty} Q_i^n.
\end{equation}
Since for each $n\in\N$ the cubes $Q_i^n$, $i\in\N$, are nonoverlapping and their volumes form a nonincreasing  sequence, see \eqref{eq6.9}, the same is true for the family of cubes $\{Q_i\}_{i\in\N}$. Thus, for every $i\in\N$
\begin{equation}\label{eq6.13}
\mathring{Q}_i\cap\mathring{Q}_{i+1}=\emptyset\quad {\rm and}\quad |Q_i|\ge |Q_{i+1}|.
\end{equation}
Here $\mathring{S}$ stands for the interior of $S\subset\mathbb R^d$.

Now we let $N=\infty$ if $|Q_i|\ne 0$ for all $i\in\N$, otherwise,
 $N$ be the minimal element of the set of integers $n\in\Z_+$ such that
\begin{equation}\label{eq6.14}
 |Q_i|=0\quad {\rm for}\quad i> n.
\end{equation}

Then due to our assumptions, see  Lemma \ref{lem6.6}\,(c), for  $N\ne 0$ there are functions $a_i\in L_{q'}$, $1\le i< N+1$, such that in the weak topology of $L_{q'}$
\begin{equation}\label{eq6.15}
a_i=\lim_{n\to\infty}a_{Q_i^n}.
\end{equation}
The properties of these functions are presented in the next result.
\begin{Lm}\label{lem6.7}
(1) If $N\ne \infty$ and $i> N$, then in the weak$^*$ topology of $B(\textsc{v}_\kappa^*)$
\begin{equation}\label{eq6.16}
\lim_{n\to\infty} \mathfrak i^*(a_{Q_i^n})=0.
\end{equation}
(2) If $N\ne 0$ and $1\le i<N+1$, then the function $a_i$ is a $\kappa$-atom subordinate to $Q_i$.
\end{Lm}
\begin{proof}
(1) We have to prove that for each $i> N$
\begin{equation}\label{eq6.17}
\lim_{n\to\infty}\hat a_{Q_i^n}(v)=0\quad {\rm for\ every}\quad v\in\textsc{v}_\kappa,
\end{equation}
where $\hat a_{Q_i^n}$ is the image of $a_{Q_i^n}$ under the natural embedding $U_\kappa\hookrightarrow U_\kappa^{**}=V_\kappa^*$, see \eqref{eq6.5}. Since $C^\infty/\mathcal P_{k-1}^d$ is dense in $\textsc{v}_\kappa$, we can take $v\in C^\infty/\mathcal P_{k-1}^d$ in which case
\begin{equation}\label{eq6.18}
|\hat a_{Q_i^n}(v)|=\left|\int_{Q_i^n}\tilde v a_{Q_i^n}\, dx\right|\le \|a_{Q_i^n}\|_{L_{q'}(Q_i^n)}E_{kq}(\tilde v;Q_i^n),
\end{equation}
here $\tilde v\in C^\infty$ is a representative of the factor-class $v\in C^\infty/\mathcal P_{k-1}^d$.\\
Due to Definition \ref{def2.1} and the Taylor formula the right-hand side is bounded from above by
\[
|Q_i^n|^{-\lambda} c(k,d)|\tilde v|_{W_q^k(Q^d)}|Q_i^n|^{\frac k d +\frac 1 q }= c(k,d,\tilde v)|Q_i^n|^{-\lambda+\frac k d +\frac 1 q }.
\]
By the definition of $s(\kappa)$, see \eqref{equ1.9}, and  \eqref{eq6.1}
\[
-\lambda+\frac k d +\frac 1 q \ge\frac k d -\left(\lambda-\frac 1 q +\frac 1 p\right)=\frac{k-s(\kappa)}{d}>0.
\]
Hence, since $|Q_i^n|\to 0$ as $n\to\infty$ for each $i> N$, the right-hand side in \eqref{eq6.18} tends to $0$ as $n$ tends to $\infty$.

This proves \eqref{eq6.16}.\smallskip

\noindent (2) For $N\ne 0$ and each $1\le i< N+1$ the sequence $\{Q_{i}^n\}_{n\in\N}$ converges to a compact cube $Q_i\subset Q^d$ with $|Q_i|>0$ and the sequence $\{a_{Q_i^n}\}_{n\in\N}$ converges to $a_i\in L_{q'}$ in the weak topology of $L_{q'}$, see Lemma \ref{lem6.2}. Hence, by the Fatou lemma we have
\[
\|a_i\|_{q'}\le\varliminf_{n\to\infty}\|a_{Q_i^n}\|_{q'}\le \varliminf_{n\to\infty}|Q_i^n|^{-\lambda}=|Q_i|^{-\lambda}.
\]
In other words, $a_i$ satisfies the inequality
\[
\|a_i\|_{q'}\le |Q_i|^{-\lambda}.
\]
It remains to prove that
\begin{equation}\label{eq6.19}
{\rm supp}\, a_i\subset Q_i\quad {\rm and}\quad a_i\perp\mathcal P_{k-1}^d.
\end{equation}

If, on the contrary, $|{\rm supp}\, a_i\setminus Q_i|>0$, then there is a nontrivial closed cube $Q\subset Q^d\setminus Q_i$ such that
\begin{equation}\label{eq6.20}
\int_{Q}|a_i|^{q'}dx>0.
\end{equation}
However, $Q_i^n\to Q_i$ in the Hausdorff metric
as $n\to\infty$ and therefore $\mathring{Q}_i^n\cap \mathring{Q}=\emptyset$ for all sufficiently large $n$.
This and condition (c) of Lemma \ref{lem6.2} imply that
\[
0=\lim_{n\rightarrow\infty}\int_{Q^d} (f\cdot1_{Q}) a_{Q_i^n}dx=\int_{Q} f a_i\, dx;
\]
here $f\in L_q$ and $1_S$ stands for the indicator of a set $S\subset\RR^d$. 

Since $f$ is arbitrary, $a_i|_Q$ is zero in $L_{q'}(Q)$ in contradiction to \eqref{eq6.20}.

To prove the second assertion of \eqref{eq6.19} we use the fact that $a_{Q_i^n}\perp\mathcal P_{k-1}^d$ for all $i$ and $n$. Hence, due to Lemma \ref{lem6.2}\,(c) for every polynomial $m\in\mathcal P_{k-1}^d$
\[
0=\lim_{ n\rightarrow\infty}\int_{Q^d}m a_{Q_{i}^n}\,dx=\int_{Q^d}m a_{i}\,dx.
\]

Thus, $a_i$ is a $\kappa$-atom subordinate to $Q_i$ (in the sequel denoted by $a_{Q_i}$). 
\end{proof}

Now we show that for $1\le N<\infty$
\begin{equation}\label{eq6.21}
v_N:=\sum_{i=1}^{N} c_i a_{Q_i}\in\mathcal B_\kappa.
\end{equation}

In fact, by Lemma \ref{lem6.7}\,(2) and \eqref{eq6.13} $a_{Q_i}$ are $\kappa$-atoms and $\{Q_i\}_{1\le i\le N}$ is a packing. Moreover, by Lemma \ref{lem6.6}\,(d) the $\kappa$-atom $v_N$ satisfies 
\[
[v_N]_{p'}=\|c\|_{p'}\le 1,
\]
as required.

Further, for $N=\infty$ 
\begin{equation}\label{eq6.22}
v_\infty:=\sum_{i=1}^\infty c_i a_{Q_i}\in\bar{\mathcal B}_\kappa.
\end{equation}
Indeed, by Lemmas \ref{lem6.6}\,(d) and \ref{lem6.7}\,(2)
\[
\left\|\sum_{i=\ell}^m c_ia_{Q_i}\right\|_{U_\kappa}\le \left(\sum_{i=\ell}^m |c_i|^{p'}\right)^{\frac{1}{p'}}\to 0
\]
as $\ell, m\to\infty$, i.e., the series  in \eqref{eq6.22} converges in $U_\kappa$.
Moreover, its partial sums belong to $\mathcal B_\kappa$, cf. \eqref{eq6.21}, hence, $v_\infty$ belongs to the closure of $\mathcal B_\kappa$.

Setting for $N=0$
\begin{equation}\label{eq6.22a}
v_0:=0
\end{equation}
we complete the proof of Statement \ref{stat6.5} by showing that in the weak$^*$ topology of $B(\textsc{v}_\kappa^*)$
\begin{equation}\label{eq6.23}
\lim_{n\to\infty} \mathfrak i^*(b^n)=\mathfrak i^*(v_N).
\end{equation}

As in Lemma \ref{lem6.7}\,(1) it suffices to prove that for every $f\in C^\infty/\mathcal P_{k-1}^d$
\begin{equation}\label{eq6.24}
\lim_{n\to\infty} f(b^n)=f(v_N).
\end{equation}

To prove \eqref{eq6.24}, we fix $\varepsilon\in (0,1)$ and denote by $\pi^n\subset\{Q_i^n\}_{i\in\N}$ 
the packing containing all cubes of nonzero volumes. Further, we represent $\pi^n$ as the union of two packings (one of which is possibly empty)
\[
\pi_1^n:=\{Q_i^n\in\pi^n\, :\, |Q_i^n|\le\varepsilon\}\quad {\rm and}\quad \pi_2^n:=\pi^n\setminus\pi_1^n.
\]
If $\pi_1^n\ne\emptyset$, then due to inequality \eqref{e3.32}  for every $S\subset\pi_1^n$ and $f\in C^\infty/\mathcal P_{k-1}^d$
\begin{equation}\label{eq6.25}
\left|\sum_{Q_i^n\in S}c_i^n f(a_{Q_i^n})\right|\le c(k,d,f)\varepsilon^{\frac{k-s(\kappa)}{d}}.
\end{equation}
Also, if $\pi_2^n\ne\emptyset$, then for some natural number $\ell_\varepsilon(n)$ we have
\[
\pi_2^n=\{Q_1^n,\dots, Q_{\ell_\varepsilon(n)}^n\},
\]
see \eqref{eq6.9}; moreover, comparing volumes of $Q^d$ and  of the union of cubes of $\pi_2^n$ we have
\begin{equation}\label{eq6.26}
\ell_\varepsilon(n)\le\frac{1}{\varepsilon},\quad n\in\N.
\end{equation}
For $\pi_2^n=\emptyset$ we write
\begin{equation}\label{eq6.26a}
\ell_\varepsilon(n):=0.
\end{equation}

Now, we set
\[
\ell_\varepsilon:=\varlimsup_{n\to\infty}\min\{\ell_\varepsilon(n),N\}\in \bigl[0,\min\bigl\{\mbox{$\frac 1 \varepsilon $},N\bigr\}\bigr]
\]
and choose an infinite subsequence
\[
J:=\{n\in\N\, :\, \ell_\varepsilon(n)=\ell_\varepsilon\}.
\]

Next, if $N\ne 0$, then by the definition of $v_N$, see \eqref{eq6.21}--\eqref{eq6.22a}  there is some $\ell:=\ell(f,\varepsilon)\in\N$ satisfying $\ell_\varepsilon\le \ell< N+1$ such that
\begin{equation}\label{eq6.27}
\left|f(v_N)-f\left(\sum_{i=1}^{\ell} c_{i }a_{Q_i}\right)\right|<\varepsilon.
\end{equation}
In this case, for $n\in\N$ and $m\in J$ we have
\begin{equation}\label{eq6.30}
\begin{array}{l}
\displaystyle
|f(v_N)-f(b^n)|\le |f(v_N)-f(b^m)|+|f(b^m)-f(b^n)|\\
\\
\displaystyle
\le
\left|f\left(v_N-\sum_{i=1}^{\ell} c_i a_{Q_i}\right)\right|
 +\left|\sum_{i=1}^{\ell} (c_i-c_i^m) f(a_{Q_i^m})\right|+\left|\sum_{i=1}^{\ell} c_i f(a_{Q_i^m}-a_{Q_i})\right|\\
 \\
 \displaystyle +\left|f\left(b^m-\sum_{i=1}^{\ell} c_i^m a_{Q_i^m}\right)\right|+|f(b^m)-f(b^n)|=:I+II_m+III_m+IV_m+V_{m,n}.
\end{array}
\end{equation}
Here $I<\varepsilon$ due to \eqref{eq6.27}. $II_m$  and $III_m$ tend to $0$ as $m\to\infty$, since
by Lemma \ref{lem6.6}\,(b),(d)
\[
\lim_{n\to\infty} f(a_{Q_i^n}-a_i)=0\quad {\rm and}\quad \lim_{n\to\infty} (c_i-c_i^n)=0,
\]
and, moreover, due to \eqref{e3.32}
\[
\sup_i |f(a_{Q_i^n})|\le \gamma(k,d,f)\sup_i |Q_i^n|^{\frac{k-s(\kappa)}{d}}<\infty.
\]
Further, $IV_m<c(k,d,f)\varepsilon^{\frac{k-s(\kappa)}{d}}
$ by \eqref{eq6.25} because $b^m-\sum_{i=1}^{\ell} c_i^m a_{Q_i^m}=\sum_{i=\ell+1}^\infty c_i^m a_{Q_i^m}$ and all cubes $Q_{i}^m$ with $i>\ell$ belong to $\pi_1^m$. 

Finally, $V_{m,n}\to 0$ as $m,n\to\infty$ because due to the condition of Statement \ref{stat6.5} the sequence $\{f(b^n)\}_{n\in\N}$ converges.

Applying these facts to \eqref{eq6.30} we obtain that there is some $n_\varepsilon\in\N$ such that for all $n\ge n_\varepsilon$
\[
|f(v_N)-f(b^n)|< 2\varepsilon +c(d,k,f)\varepsilon^{\frac{k-s(\kappa)}{d}}.
\]
This implies that
\[
\lim_{n\to\infty}f(b^n)=f(v_N)
\]
and completes the proof of Statement \ref{stat6.5} for
$N\ne 0$, see \eqref{eq6.23}, \eqref{eq6.24}.

Finally, if $N=0$, then $v_N=0$ and  for all sufficiently large $n\in\N$ the packing $\pi_2^n=\emptyset$. Thus,  for such $n$ we have by \eqref{eq6.25}
\[
|f(v_N)-f(b^n)|\le c(d,k,f)\varepsilon^{\frac{k-s(\kappa)}{d}}
\]
which implies that $\lim_{n\to\infty}f(b^n)=0=f(v_N)$ and completes the proof of Statement \ref{stat6.5} in this case as well.

The proof of part (a) of Proposition \ref{prop6.4} is complete. Hence, if $1<p:=p(\kappa)<\infty$, then the injection $\mathfrak i^*|_{U_\kappa}: U_\kappa\rightarrow \textsc{v}_\kappa^*$ maps the set $\bar{\mathcal B}_\kappa$ in a subset of $B(\textsc{v}_\kappa^*)$ compact
in the weak$^*$ topology of $\textsc{v}_\kappa^*$.\smallskip

\noindent {\bf (b)}  In this case, we should prove compactness in the weak$^*$ topology of $\textsc{v}_\kappa^*$ of the image under $\mathfrak i^*$ of the set of $\kappa$-atoms $\mathcal A_\kappa\subset U_\kappa^0$. Similarly to Statement \ref{stat6.5} this is equivalent to
the following statement:

If $\{b^n\}_{i\in\N}\subset\mathcal A_\kappa$ is such that the sequence $\{\mathfrak i^*(b^n)\}_{n\in\N}$ weak$^*$ converges in $B(\textsc{v}_\kappa^*)$, then its limit belongs to $\mathfrak i^*(\mathcal A_\kappa)$. 

As before, we may assume without loss of generality that $\{b^n\}_{i\in\N}$ satisfies conditions (a), (b), (c) of Lemma \ref{lem6.6}. Moreover, condition (d) of the lemma is trivially fulfilled for all $1\le p'\le\infty$. Therefore as in the proof of part (a) of the proposition
the sequence $\{\mathfrak i^*(b^n)\}_{n\in\N}$ weak$^*$ converges to $\mathfrak i^*(v_N)$, see \eqref{eq6.21}--\eqref{eq6.22a}. Since here $N\subset\{0,1\}$, $v_N\in\mathcal A_\kappa$.

The proof of the proposition is complete.
\end{proof}

\subsection{} Now, we complete the proof of Theorem \ref{teo1.20}.

First, we consider the case of $p:=p(\kappa)\in (1,\infty)$. \smallskip

We begin with the following:
\begin{Lm}\label{lem6.8}
The space $\textsc{v}_\kappa$ isometrically embeds in the space  $C(\mathfrak i^*(\bar{\mathcal B}_\kappa))$ of continuous functions on the (metrizable) compact space $\mathfrak i^*(\bar{\mathcal B}_\kappa)$.
\end{Lm}
\begin{proof}
Due to Theorem \ref{prop1.4}\,(a) the symmetric convex hull of $\mathcal B_\kappa$ denoted by ${\rm sc}(\mathcal B_\kappa)$ is dense in the closed unit ball $B(U_\kappa)$ in the norm topology; hence the same is true for ${\rm sc}(\bar{\mathcal B}_\kappa)$. Moreover, the image  $\mathfrak i^*(B(U_\kappa))$ of this ball is dense in $B(\textsc{v}_\kappa^*)$ in the weak$^*$ topology of the latter closed ball. Hence, the set
\begin{equation}\label{eq6.31}
\mathfrak i^*({\rm sc}(\bar{\mathcal B}_\kappa))={\rm sc}(\mathfrak i^*(\bar{\mathcal B}_\kappa))
\end{equation}
is weak$^*$ dense in $B(\textsc{v}_\kappa^*)$.

This implies for every element $v\in\textsc{v}_\kappa$ regarded as a bounded linear functional on $\textsc{v}_\kappa^*$ the equality
\begin{equation}\label{eq6.32}
\|v\|_{\textsc{v}_\kappa}=\sup_{v^*\in \mathfrak i^*(\bar{\mathcal B}_\kappa)}|v(v^*)|.
\end{equation} 
In fact, by the Hahn-Banach theorem
\begin{equation}\label{eq6.33}
\|v\|_{\textsc{v}_\kappa}=\sup_{v^*\in B(\textsc{v}_\kappa^*)}|v(v^*)|.
\end{equation}
Since every such $v$ is continuous in the weak$^*$ topology of $\textsc{v}_\kappa^*$, we can replace $B(\textsc{v}_\kappa^*)$ by its weak$^*$ dense subset ${\rm sc}(\mathfrak i^*(\bar{\mathcal B}_\kappa))$. In turn, the latter set can be replaced by the smaller set $\mathfrak i^*(\bar{\mathcal B}_\kappa)$ because, by definition,
\[
\mathfrak i^*(\bar{\mathcal B}_\kappa)\subset {\rm sc}(\mathfrak i^*(\bar{\mathcal B}_\kappa)):=\left\{\sum_i\lambda_i v_i^*\, :\, \{v_i^*\}\subset \mathfrak i^*(\bar{\mathcal B}_\kappa),\, \{\lambda_i\}\subset\RR,\, \sum_i |\lambda_i|\le 1\right\}
\]
and the supremum of $|v(v^*)|$ over the latter set is bounded from above by
\[
\sup\left(\sum_i |\lambda_i|   \right)\cdot\max_i |v(v_i^*)|\le \max_{v^*\in \mathfrak i^*(\bar{\mathcal B}_\kappa)}|v(v^*)|,
\]
as required.

Finally, since $v|_{\mathfrak i^*(\bar{\mathcal B}_\kappa)}$ is a continuous function on $\mathfrak i^*(\bar{\mathcal B}_\kappa)$ in the weak$^*$ topology induced from $B(\textsc{v}_\kappa^*)$ and its supremum norm equals $\|v\|_{\textsc{v}_\kappa}$, see \eqref{eq6.32}, the map
\[
\textsc{v}_\kappa\ni v\mapsto v(v^*),\quad v^*\in \mathfrak i^*(\bar{\mathcal B}_\kappa),
\]
is a linear isometric embedding of $\textsc{v}_\kappa$ in $C(\mathfrak i^*(\bar{\mathcal B}_\kappa))$.
\end{proof}

Now let $v^*$ be a linear continuous functional on the space $\textsc{v}_\kappa$ regarded as the closed subspace of $C(\mathfrak i^*(\bar{\mathcal B}_\kappa))$. By the Hahn-Banach theorem $v^*$ can be extended to a linear continuous functional, say, $\hat v^*$ on the latter space with the same norm. In turn, by the Riesz representation theorem there is a {\em regular finite Borel measure} on the compact space $\mathfrak i^*(\bar{\mathcal B}_\kappa)$ denoted by $\mu_{v^*}$ that represents $\hat v^*$.

This implies that
\begin{equation}\label{eq6.34}
v(v^*)=\int_{\mathfrak i^*(\bar{\mathcal B}_\kappa)}v\, d\mu_{v^*},\quad v\in\textsc{v}_\kappa.
\end{equation}

At the next stage we exploit this measure to find a similar representation for elements of $V_\kappa$.

To this end, we use the weak$^*$ density of the subspace $\textsc{v}_\kappa$ in the space $V_\kappa$, see Lemma \ref{lem6.2}. According to this lemma,  for every $v\in V_\kappa$ there is a {\em bounded} in the $\textsc{v}_\kappa$ norm sequence $\{v_j\}_{j\in\N}\subset\textsc{v}_\kappa$ such that
\begin{equation}\label{eq6.35}
\lim_{j\to\infty}v_j(u)=v(u),\quad u\in U_\kappa.
\end{equation}
Now let $\tau: \mathfrak i^*(U_\kappa)\rightarrow U_\kappa$ be the inverse to the {\em injection} $\mathfrak i^*|_{U_\kappa}: U_\kappa\rightarrow \textsc{v}_\kappa^*$, see Proposition \ref{prop6.1}\,(a). Making the change of variable $u\to \tau(v^*)$ we derive from \eqref{eq6.35}
\begin{equation}\label{eq6.36}
\lim_{j\to\infty} v_j(v^*)=(v\circ\tau)(v^*),\quad v^*\in \mathfrak i^*(\bar{\mathcal B}_\kappa).
\end{equation}
Since linear functionals $v_j:\textsc{v}_\kappa\to\RR$ are continuous in the weak$^*$ topology defined by $\textsc{v}_\kappa^*$ their traces to $\mathfrak i^*(\bar{\mathcal B}_\kappa)$ are continuous functions in the weak$^*$ topology induced from $B(\textsc{v}_\kappa^*)$. This implies the following:
\begin{Lm}\label{lem6.9}
The function $(v\circ\tau)|_{\mathfrak i^*(\bar{\mathcal B}_\kappa)}$ is  $\mu_{v^*}$-integrable and bounded.

Moreover, a function $\phi_{v^*}:V_\kappa\to\RR$ given by
\begin{equation}\label{eq6.37}
\phi_{v^*}(v):=\int_{\mathfrak i^*(\bar{\mathcal B}_\kappa)}v\circ\tau\,d\mu_{v^*}
\end{equation}
belongs to $V_\kappa^*$.
\end{Lm}
\begin{proof}
Since $\mu_{v^*}$ is a regular Borel measure, every continuous function on $\mathfrak i^*(\bar{\mathcal B}_\kappa)$ is $\mu_{v^*}$-measurable. Moreover, pointwise limits of sequences of such functions are $\mu_{v^*}$-measurable as well. In particular, $(v\circ\tau)|_{\mathfrak i^*(\bar{\mathcal B}_\kappa)}$ being the pointwise limit of a sequence of continuous functions, see \eqref{eq6.36},  is $\mu_{v^*}$-measurable.

Further, the sequence $\{v_j\}_{j\in\N}$ is bounded in the $\textsc{v}_\kappa$-norm and $\mathfrak i^*(\bar{\mathcal B}_\kappa)\subset B(\textsc{v}_\kappa^*)$. Therefore, for every $v^*\in \mathfrak i^*(\bar{\mathcal B}_\kappa)$
\[
|(v\circ\tau)(v^*)|\le\sup_j |v_j(v^*)|\le\|v^*\|_{\textsc{v}_\kappa^*}\cdot\sup_j\|v_j\|_{\textsc{v}_\kappa}\le \sup_j\|v_j\|_{\textsc{v}_\kappa}<\infty.
\]
This implies boundedness of  $(v\circ\tau)|_{\mathfrak i^*(\bar{\mathcal B}_\kappa)}$.

Since, in turn, the measure $\mu_{v^*}$ is finite, we conclude that the integral in \eqref{eq6.37} is well-defined and the function $\phi_{v^*}$ satisfies
\[
|\phi_{v^*}(v)|\le\left(\sup_{\mathfrak i^*(\bar{\mathcal B}_\kappa)}|v\circ\tau|\right)\,|\mu_{v^*}|,\quad v\in V_\kappa.
\]

Further, since $\tau:=(\mathfrak i^*|_{U_\kappa})^{-1}$, the supremum here is bounded from above by
\[
\|v\|_{V_\kappa}\cdot \sup_{b\in\bar{\mathcal B}_\kappa}\|b\|_{U_\kappa}\le \|v\|_{V_\kappa}.
\]
Thus, $\phi_{v^*}$ is a linear continuous functional on $V_\kappa$ of 
norm $\le |\mu_{v^*}|$.
\end{proof}

At the next stage we establish  weak$^*$ continuity of  $\phi_{v^*}$ on $V_\kappa$ regarded as the dual space of $U_\kappa^*$, see Theorem \ref{te1.11}. 

To this end it suffices to show that  $\phi_{v^*}^{-1}(R)\subset V_\kappa$ is weak$^*$ closed for every closed subinterval  $R\subset\RR$. Since this preimage is convex, we can use the Krein-Smulian weak$^*$ closedness criterion, see, e.g., \cite[Thm.\,V.5.7]{DSch-58}. In our case, it asserts that 
$\phi_{v^*}^{-1}(R)$ is weak$^*$ closed iff
$B_r(0)\cap \phi_{v^*}^{-1}(R)$ is for every $r>0$; here $B_r(0):=\{v\in V_\kappa\, :\, \|v\|_{V_\kappa}\le r\}$.

In turn, since $V_\kappa=U_\kappa^*$ and $U_\kappa$ is separable, see Theorem \ref{prop1.4}\,(b),  every bounded subset of $V_\kappa$ equipped with the induced weak$^*$ topology is metrizable. Hence, weak$^*$ closedness 
of $B_r(0)\cap\phi_{v^*}^{-1}(R)$ is a consequence of the following:
\begin{Lm}\label{lem6.10}
If a sequence $\{v_j\}_{j\in\N}\subset B_r(0)\cap\phi_{v^*}^{-1}(R)$ converges to some $v\in V_\kappa$, then $v\in B_r(0)\cap\phi_{v^*}^{-1}(R)$.
\end{Lm}
\begin{proof}
Weak$^*$ convergence of $\{v_j\}_{j\in\N}$ to $v$ implies pointwise convergence of the sequence of functions $\{v_j\circ\tau |_{\mathfrak i^*(\bar{\mathcal B}_p)}\}_{j\in\N}$ to the function $v\circ\tau |_{\mathfrak i^*(\bar{\mathcal B}_p)}$. Further, the functions of this sequence are $\mu_{v^*}$-measurable and bounded by $\sup_j\|v_j\|_{V_\kappa}$, see Lemma \ref{lem6.9}. Moreover, by the assumption of Lemma \ref{lem6.10}
\begin{equation}\label{eq6.38}
\sup_j\|v_j\|_{V_\kappa}\le r\quad {\rm and}\quad \phi_{v^*}(v_j)\in R,\ \  j\in\N.
\end{equation}
Therefore, the Lebesgue pointwise convergence theorem implies
\[
\lim_{j\to\infty}\phi_{v^*}(v_j)=\int_{\mathfrak i^*(\bar{\mathcal B}_p)}\left(\lim_{j\to\infty} v_j\circ\tau\right)d\mu_{v^*}=\int_{\mathfrak i^*(\bar{\mathcal B}_p)}v\circ\tau\,d\mu_{v^*}=\phi_{v^*}(v).
\]
Since $R\subset\RR$ is closed, the limit on the left-hand side belongs to $R$, hence,
the limit point $v\in B_r(0)\cap\phi_{v^*}^{-1}(R)$ as required.
\end{proof}

Thus $\phi_{v^*}$ is a weak$^*$ continuous linear functional from $V_\kappa^*$.  By the definition of the weak$^*$ topology on $V_\kappa=U_\kappa^*$ every such functional is uniquely determined by an element of $U_\kappa$, i.e., for some $u_{v^*}\in U_\kappa$
\[
\phi_{v^*}(v)=v(\mathfrak i^*(u_{v^*})),\quad v\in V_\kappa.
\]
On the other hand, see \eqref{eq6.34}, for all $v\in\textsc{v}_\kappa$,
\[
\phi_{v^*}(v)=v(v^*).
\]
Since $\mathfrak i^*|_{U_\kappa}:U_\kappa\rightarrow \textsc{v}_\kappa$ is an isometry, see Proposition \ref{prop6.1}\,(a), $\textsc{v}_\kappa$ separates points of $U_\kappa$. Hence, these two equalities imply that
\[
v^*=\mathfrak i^*(u_{v^*}).
\]
Thus, every point $v^*\in\textsc{v}_\kappa^*$ is the image under $\mathfrak i^*$ of some point of $U_\kappa$, i.e., $\mathfrak i^*: U_\kappa\to\textsc{v}_\kappa^*$ is a surjection. Moreover, $\mathfrak i^*|_{U_\kappa}$ is also an isometry. Hence $\mathfrak i^*$ is a linear isometric isomorphism of the Banach spaces $U_\kappa$ and $\textsc{v}_\kappa^*$. 

This completes the proof of Theorem \ref{teo1.20} for $1<p<\infty$.\smallskip

Now, we consider the case of $p=\infty$.\smallskip 

The proof repeats line by line the proof of the previous case with the related set $\mathcal B_\kappa$ replaced by the set $\mathcal A_\kappa$ of all $\kappa$-atoms. In this derivation we  take into account that  $\mathfrak i^*(\mathcal A_\kappa)$ is a weak$^*$ compact subset of $B(\textsc{v}_\kappa^*)$, see Proposition \ref{prop6.4}\,(b), and that ${\rm sc}(\mathfrak i^*(\mathcal A_\kappa))$ is a weak$^*$ dense subset of $B(\textsc{v}_\kappa^*)$ because ${\rm sc}(\mathcal A_\kappa)$ is dense in $B(U_\kappa)$.
We leave the details to the readers.

The proof of the theorem is complete.
\end{proof}

\end{document}